\documentclass{amsart}
\usepackage{amsmath,amscd,xypic,amssymb,combelow,color,enumitem,graphicx,float,comment,mathtools}
\usepackage{tikz-cd}
\usepackage[hidelinks]{hyperref}
\definecolor{citation}{rgb}{0,.40,.80}
\hypersetup{
   colorlinks=true,
   citecolor=black,
    linkcolor=citation,
    }
\usepackage[style=alphabetic, backend=biber, eprint=true, maxbibnames=99, doi=false, url = false, isbn=false]{biblatex}

\newtheorem{theorem}{Theorem}[section]

\newtheorem{proposition}[theorem]{Proposition}

\newtheorem{assumption}[theorem]{Assumption}
\newtheorem{corollary}[theorem]{Corollary}
\newtheorem{conjecture}[theorem]{Conjecture}
\newtheorem{definition}[theorem]{Definition}

\newtheorem{example}[theorem]{Example}
\newtheorem{remark}[theorem]{Remark}

\def\fg{{\mathfrak{g}}}

\def\GL{\mathrm{GL}}

\def\BC{{\mathbb{C}}}
\def\BK{{\mathbb{K}}}

\def\BN{{\mathbb{N}}}

\def\BQ{{\mathbb{Q}}}
\def\BZ{{\mathbb{Z}}}

\def\Gr{\mathrm{Gr}}

\DeclareMathOperator{\Tr}{\mathrm{Tr}}
\def\vir{\mathrm{vir}}

\def\id{\textrm{id}}
\DeclareMathOperator{\ptau}{{}^{\mathfrak{p}}\tau}
\DeclareMathOperator{\JH}{\mathrm{JH}}
\DeclareMathOperator{\Per}{\mathrm{Perv}}

\DeclareMathOperator{\BCu}{B\mathbb{C}^{*}}
\DeclareMathOperator{\act}{\mathrm{act}}
\DeclareMathOperator{\BT}{\mathrm{BT}}
\DeclareMathOperator{\HHT}{\mathrm{H}^{T}}
\DeclareMathOperator{\HH}{\mathrm{H}}

\def\CA{{\mathcal{A}}}
\def\CB{{\mathcal{B}}}

\def\CS{{\mathcal{S}}}

\def\CV{{\mathcal{V}}}

\def\sym{\textrm{sym}}
\def\Sym{\textrm{Sym}}

\def\bs{\boldsymbol{\varsigma}}

\def\nn{{\mathbb{N}^I}}
\def\zz{{\mathbb{Z}^I}}

\def\bm{{\boldsymbol{m}}}
\def\bn{{\boldsymbol{n}}}

\def\bt{{\boldsymbol{t}}}
\def\bu{{\boldsymbol{u}}}

\def\b0{{\boldsymbol{0}}}

\def\loccit{\emph{loc.~cit.~}}

\def\Spec{\text{Spec}}

\def\oij{i \rightarrow j}
\def\oji{j \rightarrow i}

\def\oCS{\mathring{\CS}}

\def\Sym{\text{Sym}}

\def\op{\text{op}}

\def\oCS{\mathring{\CS}}

\def\edge{\Omega}

\def\dQ{\bar{Q}}
\def\tQ{\tilde{Q}}

\def\tW{\tilde{W}}

\newcommand{\pD}{{^{\mathfrak{p}}\!\mathcal{D}}}
\newcommand{\pH}{{^{\mathfrak{p}}\!\mathcal{H}}}

\begin{document}

\title[BPS Lie algebras, perverse filtrations and shuffle algebras]{\Large{\textbf{BPS Lie algebras, perverse filtrations and shuffle algebras}}} 

\author[Shivang Jindal and Andrei Negu\cb t]{Shivang Jindal and Andrei Negu\cb t}

\address{École Polytechnique Fédérale de Lausanne (EPFL), Lausanne, Switzerland}
\email{shivang.jindal@epfl.ch}

\address{École Polytechnique Fédérale de Lausanne (EPFL), Lausanne, Switzerland \newline \text{ } \ \ Simion Stoilow Institute of Mathematics (IMAR), Bucharest, Romania} 
\email{andrei.negut@gmail.com}

\maketitle
	
\begin{abstract} We give an explicit description of the BPS Lie algebra of any quiver with zero potential, by relating the perverse filtration on the cohomological Hall algebra with certain limit conditions on polynomials. Our results also give a partial description of the perverse filtration for arbitrary potential, which we conjecture is complete in the case of tripled quivers with canonical cubic potential.

\end{abstract}

\bigskip
\section{Introduction}
\label{sec:intro}

\subsection{Cohomological Hall algebras} 

Ever since their introduction in \cite{KS}, cohomological Hall algebras (CoHAs for short) have been the natural setting for the study of Donaldson-Thomas theory for quivers with potential. Specifically, to any quiver (i.e. oriented graph) $Q$ and potential $W$ (i.e. linear combination of oriented cycles of $Q$) one may associate an algebra
$$
\CA^T_{Q,W}
$$
using the moduli stacks of representations of $Q$, sheaves of vanishing cycles associated to the potential $W$, and equivariant with respect to a torus $T$ as in Subsection \ref{sub:quiver}. A particularly important case of the construction above is when the potential is 0, in which case \cite{KS} showed that
\begin{equation}
\label{eqn:iso intro}
\CA^T_Q := \CA^T_{Q,0} \cong \bigoplus_{\bn = (n_i \geq 0)_{i \in I}} \BC[\text{Lie }T][z_{i1},\dots,z_{in_i}]^{\text{sym}}_{i \in I}
\end{equation}
made into an algebra with respect to the shuffle product \eqref{eqn:mult}; above, $I$ denotes the vertex set of $Q$. For any potential $W$, then under the mild assumption of Subsection \ref{morphismshuffle}, there is an algebra homomorphism 
\begin{equation}
\label{eqn:map to shuffle intro}
\CA^T_{Q,W} \xrightarrow{\Psi} \CA^T_{Q}
\end{equation}
which is known to be injective in many cases of interest (see Section \ref{sec:triple}). Therefore, the shuffle product is a useful explicit model for the study of CoHAs. 

\medskip

\subsection{Perverse filtration} 

An important feature of cohomological Hall algebras is their perverse filtration 
$$
P^1(\CA^T_{Q,W}) \subset P^2 (\CA^T_{Q,W}) \subset \dots \subset \CA^T_{Q,W}
$$
which we will recall in Subsection \ref{sub:perverse}. This filtration is defined in \cite{davison2020cohomological} by considering the pushforward of the semisimplification morphism from the moduli stack of quiver representations to its coarse moduli space, and captures important topological invariants. In recent years, perverse filtrations defined in this way have played a prominent role in geometric representation theory \cite{botta2023okounkovs, achar} and enumerative geometry \cite{todamaulik}. 

\medskip

Thus, it becomes quite relevant to obtain an explicit description of the perverse filtration on $\CA^T_Q$, in terms of the polynomial rings which appear in \eqref{eqn:iso intro}. In the present paper, we precisely give such a description, loosely inspired by the case of the Jordan quiver treated in \cite{NExts}. Explicitly, for any $d \geq 1$ define
$$
F^d ( \CA^T_{Q} )
$$
to be the set of polynomials $E(z_{i1},\dots,z_{in_i})_{i \in I} \in \CA^T_{Q}$ such that for any partition
\begin{equation}
\label{eqn:intro partition}
\bn = \bn^1 + \dots + \bn^k
\end{equation}
(for arbitrary $k \geq 1$ and $\bn^1,\dots,\bn^k \in \nn \backslash \b0$, where $\bn = (n_i)_{i \in I}$), the polynomial
\begin{equation}
\label{eqn:intro add y}
E(y_1+z_{i,1},\dots,y_1+z_{i,n_i^1}, \dots,y_k+z_{i,n_i-n_i^k+1}, \dots, y_k+z_{i,n_i})_{i \in I}
\end{equation}
satisfies the degree bound (in $y_1,\dots,y_k$)
\begin{equation}
\label{eqn:intro degree bound}
\text{deg}_{y_1,\dots,y_k}(E) \leq \frac {d-k}2 - \sum_{1\leq a < b \leq k} (\bn^a,\bn^b)
\end{equation}
(the symmetric bilinear form will be defined in \eqref{eqn:euler form}). We note that the condition above is somewhat different from the one that naturally appears in $K$-theoretic Hall algebras (cf. \cite{NShuffle}), wherein one constrains the monomials $\prod_{i,a} z_{i,a}^{m_{i,a}}$ appearing in $E$ by a collection of linear inequalities satisfied by the exponents $m_{i,a}$.

\medskip 

\begin{theorem}
\label{thm:main}

The filtration 
$$
F^1(\CA^T_{Q}) \subset F^2(\CA^T_{Q}) \subset \dots \subset \CA^T_{Q}
$$
defined above matches with the perverse filtration, i.e.
\begin{equation}
\label{eqn:intro main}
P^d (\CA^T_Q) = F^d (\CA^T_{Q})
\end{equation}
for all $d \geq 1$.

\end{theorem}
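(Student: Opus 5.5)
The plan is to prove the two inclusions by using a common description of both filtrations in terms of the BPS Lie algebra. On the perverse side, I will use the cohomological integrality/PBW theorem of Davison--Meinhardt in the form recalled in Subsection \ref{sub:perverse}. For $W=0$ it gives an isomorphism
$$
\Sym\Big(\bigoplus_{\bn} \mathrm{BPS}_{Q,\bn}\otimes \HH_{\BC^*}\Big)\xrightarrow{\ \sim\ }\CA^T_Q,
$$
induced by the CoHA multiplication, which is compatible with perverse filtrations. Here $\mathrm{BPS}_{Q,\bn}\otimes \HH_{\BC^*}$ sits in perverse degree $1$; the $\HH_{\BC^*}$ factor is generated by the tautological class of the diagonal $\BC^*$ and is placed in cohomological degree $\ge 0$. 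A symmetric product of $k$ such factors, with total tautological degree $\ell$ in the $\BC^*$ parts, therefore has perverse degree $k+2\ell$ up to the normalisation in the definition of $P^d$. I will also use that $P^1(\CA^T_Q)$ is exactly the BPS sheaf part, i.e. $\mathrm{BPS}_Q\otimes\HH_T$.

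Concretely, I would proceed as follows.

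\textbf{Step 1: $F$ is multiplicative.} I show $F^{d}\ast F^{d'}\subset F^{d+d'}$ directly from the shuffle product \eqref{eqn:mult}. Take a partition $\bn=\bn^1+\dots+\bn^k$ and shift the variables by $y_a$ as in \eqref{eqn:intro add y}. Each term of the symmetrised shuffle product splits every block $\bn^a$ into a part coming from the first factor and a part coming from the second. This induces partitions of the two factors into at most $k$ blocks each, say $k'$ and $k''$ nonempty blocks with $k'+k''\ge k$.
\begin{itemize}
\item The factors $E$ and $E'$ contribute degrees bounded by \eqref{eqn:intro degree bound} for these induced partitions.
\item The rational factor $\zeta$ in the shuffle product has numerator and denominator degrees given by the Euler form. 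Between blocks with different $y_a$ it contributes exactly $-(\cdot,\cdot)$ cross terms, and within the same $y_a$ it contributes nothing.
\end{itemize}
Summing these, the $-\sum(\bn^a,\bn^b)$ terms recombine to match the bound for $\bn$, and the $-\tfrac{k'}{2}-\tfrac{k''}{2}$ terms are at most $-\tfrac k2$. One subtlety is that poles cancel only after symmetrisation. I would handle it by working with the leading $y$-homogeneous part, whose denominators are of the form $(y_a-y_b)$ and have a definite degree.

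\textbf{Step 2: the base cases.} First, multiplication by the tautological class $\sum_{i,a}z_{ia}$, which is $\HH_{\BC^*}$ acting by the diagonal shift, raises every $y$-degree by at most one. It therefore maps $F^d$ into $F^{d+2}$. Second, $P^1\subset F^1$, which is the key geometric input. The condition defining $F^1$ for $k=1$ says that $E$ is independent of the diagonal shift, i.e. it is pulled back from the quotient by $\BC^*$. For $k\ge 2$ it says that the restriction to the locus of direct sums $\bn^1\oplus\dots\oplus\bn^k$ vanishes to high order in the normal directions. I would prove this by computing the restriction of the pushforward to the coarse space $\mathcal M_{\bn}$ at the stratum of semisimples of type $\bn^1\oplus\dots\oplus\bn^k$. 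The $y$-degree of $E$ is the cohomological degree along the $(\BC^*)^k$ stabiliser torus. Perversity of the BPS sheaf (the support condition on strata of codimension governed by $-\sum(\bn^a,\bn^b)$) then bounds this degree by precisely \eqref{eqn:intro degree bound} with $d=1$. Combining Step 1, this step, and the PBW isomorphism gives $P^d\subset F^d$.

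\textbf{Step 3: the reverse inclusion $F^d\subset P^d$.} This is the step I expect to be the main obstacle. I would argue by dimension counting, graded by cohomological degree and $\bn$. Consider the associated graded pieces $F^d/F^{d-1}$. Restricting to the leading $y$-terms for the finest possible partitions, with $k$ maximal subject to the bound, gives an injection
$$
F^d/F^{d-1}\hookrightarrow \bigoplus\Big(\text{products of }F^1\text{ pieces}\Big)\otimes \BC[y]_{\mathrm{bounded}} .
$$
By induction on $|\bn|$, this shows $\dim F^d/F^{d-1}\le\dim P^d/P^{d-1}$. In this count the leftover case is $k=1$, where one must show $\dim F^1_{\bn}\le\dim\mathrm{BPS}_{Q,\bn}\otimes\HH_T$. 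For this I would compare with the known character of $\mathrm{BPS}_Q$ given by the Kac polynomial / DT invariants through the integrality identity. This character identity yields
$$
\sum_d \dim\big(P^d/P^{d-1}\big)=\dim\CA^T_{Q,\bn}\ \text{in each degree},
$$
and the same total holds for $F$. Together with $P\subset F$ and the inequality on graded pieces, this forces equality.

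The difficult part is making the injectivity of the leading-term map precise, in particular controlling terms for non-generic partitions where several bounds are tight simultaneously.
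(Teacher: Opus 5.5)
Steps 1--2 give $P^d\subseteq F^d$, provided you know $P^1\subseteq F^1$. They do so via Propositions~\ref{prop:sub algebra} and~\ref{prop:delta}, plus the fact that PBW makes $P^d$ spanned by products of elements of $\fg^T_Q u^\ell$ of total perverse degree $\le d$. Your support-condition sketch for $P^1\subseteq F^1$ is imprecise, though. The clean input is that $\oplus_{\mathcal M}$ is finite, so $\oplus^*$ sends $P^d$ into perverse degree $d-2(\bm,\bn-\bm)$ on the product, and $\partial$ lowers perverse degree by $2$. Iterating over a $k$-block partition then gives the bound \eqref{eqn:intro degree bound} for every $d$ directly.

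\textbf{The genuine gap is Step 3.} The leading-term injection is left unproved, but there is a more basic problem: your count needs an \emph{upper} bound $\dim F^1_{\bn}\le\dim\fg^T_{Q,\bn}$ in each cohomological degree. The $k=1$ contributions at $\bn$ itself, namely $F^1_\bn$ and its $u$-multiples, are never reached by induction on $|\bn|$.
\begin{itemize}
\item Knowing the character of $\fg^T_{Q,\bn}$ through DT invariants only tells you the size of $P^1_\bn$. It says nothing about how large $F^1_\bn$ is.
\item The identity $\sum_d\dim\Gr^d_F=\sum_d\dim\Gr^d_P$ holds automatically for any two exhaustive filtrations of the same finite-dimensional graded piece. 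Combined with $P\subseteq F$, it cannot exclude $F^1_\bn\supsetneq P^1_\bn$.
\end{itemize}
Since $F^1_\bn\subseteq P^1_\bn$ is exactly the $d=1$ case of the theorem, the argument is circular at the crucial point.

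\textbf{The missing idea is primitivity.} The paper shows that $P$ obeys the same recursion as $F$. Compare (A1)--(A2) in Proposition~\ref{prop:key} with (B1)--(B2) in Proposition~\ref{prop:key2}. For $W=0$ these coincide, because $\mathrm{act}^*$ is the diagonal shift and $\oplus^*_{\bm,\bn-\bm}$ just splits the variables. The hard implication for $P$ runs as follows.
\begin{itemize}
\item Suppose $\gamma$ satisfies (B1)--(B2) but $\gamma\in P^i\setminus P^{i-1}$ with $i>d$. Then every $\Delta_{\bm,\bn-\bm}(\gamma)$ with $\boldsymbol{0}<\bm<\bn$ lies in perverse degree $\le d<i$. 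So the class of $\gamma$ in $\Gr^i_P$ is primitive.
\item By Proposition~\ref{prop:primitivecalculation} (Milnor--Moore together with the PBW theorem), the primitives are exactly $\fg^T_Q[u]$. Hence $\gamma\equiv\beta u^k \bmod P^{i-1}$ with $i=2k+1$ and $0\neq\beta\in\fg^T_Q$.
\item By Proposition~\ref{partialuder}, the coefficient of $y^k$ in $\mathrm{act}^*(\gamma)$ is $|\bn|^k\beta\neq 0$, since elements of $P^{2k}$ have $y$-degree $<k$. This contradicts (B1), because $k>\frac{d-1}{2}$.
\end{itemize}
Induction on $\bn$ then yields $F^d=P^d$ with no dimension counting and no character identity. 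You need this argument, or some other genuine upper bound on $F^d$, in place of Step 3.
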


\medskip 

In particular, since the BPS Lie algebra $\mathfrak{g}^T_{Q,W}$ is defined to be $P^1(\mathcal{A}^T_{Q,W})$, the $d=1$ case of \eqref{eqn:intro main} shows that the BPS Lie algebra of a quiver with 0 potential is completely determined by the $d=1$ particular case of the degree bounds \eqref{eqn:intro degree bound} (see Corollary \ref{BPS Lie algebra} for the precise statement incorporating sign twists). For an arbitrary torus $T$, the Lie algebra structure on $\fg^T_{Q}$ is far from being known. Even in the case of trivial torus, the exact characterization of the resulting BPS Lie algebra $\fg_Q$ as a subspace of polynomial rings is not known. \medskip

Our next result shows that the knowledge of perverse filtration on the cohomological Hall algebra $\mathcal{A}^T_Q$ with 0 potential can be used to deduce properties of the perverse filtration on the cohomological Hall algebra $\mathcal{A}^T_{Q,W}$ with arbitrary potential. It arose from conversations between the first-named author and Lucien Hennecart, to whom we are grateful.

\medskip

\begin{theorem}[Proposition \ref{prop:perverse}]\label{hommorphismcompatibility}
Let $Q,W,T$ satisfy Assumption \ref{assumption1}, i.e  $\mathcal{A}^T_{Q,W}$ is a free module over the cohomology ring $\mathbb{C}[\mathrm{Lie \ }T] $. Then the morphism $\Psi: \mathcal{A}^T_{Q,W} \rightarrow \mathcal{A}^T_Q$ of \eqref{eqn:map to shuffle intro} respects the perverse filtration, in the sense that 
\begin{equation}
\label{eqn:intro compatibility}
\Psi \left(P^d (\CA^T_{Q,W})\right) \subseteq P^d (\CA^T_Q)
\end{equation}
for all $d \geq 1$.

\end{theorem}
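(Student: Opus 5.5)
The plan is to realize $\Psi$ at the level of constructible complexes on the coarse moduli space, and then use functoriality of perverse truncation. Recall from Subsection \ref{sub:perverse} that both filtrations come from the semisimplification morphism $\JH \colon \mathfrak{M}^T_{\bn} \to \mathcal{M}^T_{\bn}$ from the stack of representations to its coarse space. For $W=0$ one has
$$
P^d(\CA^T_Q) = \operatorname{im}\left( \HH(\mathcal{M}_\bn, \ptau^{\leq d} \JH_* \mathbb{Q}^{\vir}) \to \HH(\mathcal{M}_\bn, \JH_*\mathbb{Q}^{\vir}) \right),
$$
and for general $W$ the same holds with $\JH_*\phi_{\Tr W}\mathbb{Q}^{\vir}$ in place of $\JH_*\mathbb{Q}^{\vir}$. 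It therefore suffices to exhibit $\Psi$, at least after a harmless modification, as the map on hypercohomology induced by a morphism of complexes
$$
\Phi \colon \JH_* \phi_{\Tr W}\mathbb{Q}^{\vir}_{\mathfrak{M}_\bn} \longrightarrow \JH_* \mathbb{Q}^{\vir}_{\mathfrak{M}_\bn} \quad\text{(possibly up to a shift compatible with the normalizations)}
$$
in the $T$-equivariant derived category of $\mathcal{M}_\bn$. Once such a $\Phi$ exists, $\Phi \circ \ptau^{\leq d} \to \Phi$ factors through $\ptau^{\leq d}$ of the target, because $\ptau^{\leq d}$ is a functor and right adjoint to the inclusion of ${}^{\mathfrak{p}}D^{\leq d}$. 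Taking hypercohomology then gives \eqref{eqn:intro compatibility} directly.

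The first step is to unwind the construction of $\Psi$ from Subsection \ref{morphismshuffle}. I expect it to be built from the natural maps relating vanishing cycles to the constant sheaf on the zero fiber, combined with equivariant localization with respect to $T$, which uses the weight condition on $W$ to kill the potential on the fixed locus. The sheaf-level pieces (the canonical maps involving $\phi_{\Tr W}$, restriction to $\Tr W^{-1}(0)$, pushforward along closed embeddings) all commute with $\JH_*$ after passing to the coarse space. To justify this one approximates the stack $\mathfrak{M}_\bn$ by proper framed moduli spaces as in Davison--Meinhardt, where $\phi$ commutes with proper pushforward, and then passes to the limit; the truncations stabilize in each fixed cohomological degree.

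The second step, and the main obstacle, is the localization part. If $\Psi$ is only defined after inverting the equivariant parameters of $\mathbb{C}[\mathrm{Lie}\,T]$, then $\Phi$ only exists as a morphism of localized equivariant complexes, and the perverse filtration must be checked there. This is where Assumption \ref{assumption1} enters. Since $\CA^T_{Q,W}$ is free over $\mathbb{C}[\mathrm{Lie}\,T]$, it injects into its localization. Perverse truncation on the coarse space commutes with the base change $-\otimes_{\mathbb{C}[\mathrm{Lie}\,T]}\mathrm{Frac}$, because localization is exact and $T$ acts fiberwise with respect to the perverse t-structure. Hence an element lies in $P^d$ if and only if its image does in the localized module intersected with the original lattice; freeness, via purity or a spectral sequence degeneration, guarantees that the filtration is saturated. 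The same holds on the target, since $\CA^T_Q$ is manifestly free by \eqref{eqn:iso intro}. So I would check $\Phi$ after localization, where it is honestly sheaf-theoretic, and then descend.

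The remaining verification is a bookkeeping check on the cohomological shifts and Tate twists in the virtual normalizations $\mathbb{Q}^{\vir}$ and in $\phi_{\Tr W}[-1]$. The index $d$ must match on both sides, rather than being shifted by a constant, so that the inclusion is $P^d \mapsto P^d$ exactly as stated in \eqref{eqn:intro compatibility}.
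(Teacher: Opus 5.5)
\textbf{There is a genuine gap in your first step.} Your whole argument depends on realizing $\Psi$ as the map on hypercohomology induced by a morphism of complexes $\Phi\colon \JH_*\varphi_{\Tr(W)}\mathbb{Q}^{\vir}\to \JH_*\mathbb{Q}^{\vir}$ on $\mathcal{M}^T_\bn(Q)$. Nothing in the construction of $\Psi$ supplies such a $\Phi$, and you do not construct one.

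In Subsection \ref{morphismshuffle}, $\Psi_\bn=\HH(r)^{-1}\circ\HH(s)$, where:
\begin{itemize}
\item $s\colon\varphi_{\Tr(W)}\mathbb{Q}^{\vir}\to (i_{\bn,T})_*(i_{\bn,T})^*\mathbb{Q}^{\vir}$ is the canonical map to the restriction to $Z^T_\bn(Q)=\Tr(W)^{-1}(0)$;
\item $r$ is the restriction map $\mathbb{Q}^{\vir}\to(i_{\bn,T})_*(i_{\bn,T})^*\mathbb{Q}^{\vir}$.
\end{itemize}
Only $\HH(r)$ is invertible, and only because of a global contraction argument using the auxiliary weighting $\mathbf{w}$, which retracts both $Z^T_\bn(Q)$ and $\mathfrak{M}^T_\bn(Q)$ onto $\mathfrak{M}^T_\bn(Q')$. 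After pushing forward along $\JH$, the map $r$ lands in a complex supported on the hypersurface $\{\Tr(W)=0\}\subset\mathcal{M}^T_\bn(Q)$. So in general it is not an isomorphism of complexes, and there is no sheaf-level arrow back to which functoriality of $\ptau^{\le d}$ could be applied.

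Restriction to a hypersurface is also not perverse t-exact: it spreads perverse amplitude over two degrees. So the shift bookkeeping you defer to the end is exactly where the index $d$ would fail to match.

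Your localization step does not repair this. The potential $W$ is $T$-invariant, so no one-parameter subgroup of $T$ gives it nonzero weight, and inverting parameters of $\mathbb{C}[\mathrm{Lie}\,T]$ cannot ``kill the potential''. The relevant $\mathbb{C}^*$, coming from $\mathbf{w}$, is not part of $T$. Moreover, $\Psi$ is defined integrally, not after localization. Rescuing your approach would require an independent sheaf-level construction of $\Psi$ on the coarse space with the correct perverse normalization, which is a substantial missing idea rather than bookkeeping.

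\textbf{How the paper proceeds instead.} It never compares complexes on $\mathcal{M}^T_\bn(Q)$. It uses Proposition \ref{prop:key2}, which characterizes $P^d(\CA^T_{Q,W,\bn})$ entirely through operations on global sections:
\begin{itemize}
\item the degree bound (B1) on $\mathrm{act}^*(\gamma)$;
\item the condition (B2) on $\oplus^*_{\bm,\bn-\bm}(\gamma)$.
\end{itemize}
The proof of that characterization rests on the PBW/integrality theorem and on identifying the primitive elements of $\mathrm{Gr}_P$ with $\fg^T_{Q,W}[u]$. The paper then checks that $\Psi$ intertwines $\mathrm{act}^*$ (diagram (C1)) and $\oplus^*$ (diagram (C2), via naturality of $r$, $s$ and Thom--Sebastiani), and concludes by induction on $\bn$.

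Assumption \ref{assumption1} enters there through the K\"unneth isomorphism, which makes $\oplus^*$ land in $\CA^T_{Q,W,\bm}\otimes_{\HHT}\CA^T_{Q,W,\bn-\bm}$. It is not used for a saturation-under-localization argument.
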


\medskip

Thus, \eqref{eqn:intro compatibility} implies that for every quiver $Q$ and potential $W$, formulas \eqref{eqn:intro degree bound} provide necessary conditions for a polynomial to arise from a CoHA element of perverse filtration $\leq d$. This behaves particularly well in the case of tripled quivers with potential, in which for suitable tori, the morphism $\Psi$ is known to be injective and we can conjecturally describe its image (see Section \ref{sec:triple}).  

\medskip 

We also note the work \cite{Kiem}, which describes the intersection cohomology of the coarse moduli spaces $\mathcal{M}_{\bn}(Q)$, and thus provides a description of the BPS Lie algebra. It would be interesting to match this description with the $d=1$ case of Theorem \ref{thm:main}, but the precise connection is not obvious to us.

\medskip

\subsection{Tripled quivers}

Let us discuss a particularly important special case of the framework above. If $Q$ is any quiver, we let $\dQ$ be the double quiver (same vertex set, but we add an arrow $a^* : \oji$ for any arrow $a : \oij$). We also consider the triple quiver $\tilde{Q}$, in which we add a loop $\omega_i$ at every vertex of the quiver. Let $\tilde{W}$ be the canonical cubic potential associated with the quiver $Q$, as recaled in Example \ref{ex:cubic}.  Following the $K$-theoretic statements in \cite[Theorem 2.9, Corollary 2.10]{Zhao} and \cite[Proposition 2.11]{NWheel}, in Proposition \ref{prop:yu} we show that
\begin{equation}
\label{eqn:intro wheel}
\Psi(\CA^T_{\tQ,\tW}) \subseteq \CS 
\end{equation} 
where $T$ is the torus of Example \ref{ex:triple}, and $\CS \subset \CA^T_Q$ denotes the set of polynomials $E(z_{i1},\dots,z_{in_i})_{i \in I}$ which satisfy the wheel conditions \eqref{eqn:wheel general}. It was conjectured in \cite{NGen} that the inclusion \eqref{eqn:intro wheel} becomes an equality if we tensor the two sides of the equation with $\text{Frac}(\BC[\text{Lie }T])$. If we add to this the fact that $\Psi$ is injective for the tripled quiver by \cite[Theorem 10.2]{davison2022integrality} and \cite[Proposition 4.6]{schiffmanncohagenerators}, then Theorem \ref{thm:main} has the following consequence.

\medskip 

\begin{corollary}
\label{cor:intro}

Subject to Conjecture \ref{conj:wheel}, $\Psi$ induces an isomorphism between
$$
P^d (\CA^T_{\tQ,\tW,\mathrm{loc}})
$$
and the set of polynomials $E$ which simulatenously satisfy the degree bounds \eqref{eqn:intro degree bound} and the wheel conditions \eqref{eqn:wheel general}.

\end{corollary}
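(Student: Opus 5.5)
The corollary should follow formally by combining Theorem \ref{thm:main}, Theorem \ref{hommorphismcompatibility}, the inclusion \eqref{eqn:intro wheel}, injectivity of $\Psi$, and Conjecture \ref{conj:wheel}. Throughout, $\CA^T_{\tQ,\tW,\mathrm{loc}}$ means the CoHA tensored over $\BC[\text{Lie }T]$ with $\text{Frac}(\BC[\text{Lie }T])$. The perverse filtration is $\BC[\text{Lie }T]$-linear, so it localizes to give $P^d(\CA^T_{\tQ,\tW,\mathrm{loc}})$. Likewise $F^d(\CA^T_Q)$ localizes, since the degree bounds \eqref{eqn:intro degree bound} concern only the variables $y_1,\dots,y_k$, and the equivariant parameters behave as constants for them. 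So we read "polynomials $E$" as elements of $\CA^T_{Q}\otimes \text{Frac}(\BC[\text{Lie }T])$.

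\textbf{Step 1: the forward inclusion.} I would first check that the tripled quiver with the torus of Example \ref{ex:triple} satisfies Assumption \ref{assumption1}. The freeness over $\BC[\text{Lie }T]$ should be available from the purity results for tripled quivers in \cite{davison2022integrality}. Theorem \ref{hommorphismcompatibility} then gives $\Psi(P^d(\CA^T_{\tQ,\tW}))\subseteq P^d(\CA^T_{\tQ})$. By Theorem \ref{thm:main}, applied to the quiver $\tQ$ (whose zero-potential CoHA is the shuffle algebra in which $\CS$ sits), the right-hand side equals $F^d$. Combining with \eqref{eqn:intro wheel} and localizing, $\Psi$ maps $P^d(\CA^T_{\tQ,\tW,\mathrm{loc}})$ into $F^d_{\mathrm{loc}}\cap\CS_{\mathrm{loc}}$. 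Injectivity of $\Psi$ comes from \cite{davison2022integrality,schiffmanncohagenerators}, and it survives localization because localization is flat.

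\textbf{Step 2: surjectivity onto $F^d_{\mathrm{loc}}\cap\CS_{\mathrm{loc}}$.} This is the main obstacle. Conjecture \ref{conj:wheel} says $\Psi$ is onto $\CS_{\mathrm{loc}}$, so every $E\in F^d_{\mathrm{loc}}\cap \CS_{\mathrm{loc}}$ equals $\Psi(x)$ for a unique $x$. We must show $x\in P^d$, that is, that $\Psi$ is \emph{strict} for the perverse filtrations, $\Psi^{-1}(P^d(\CA^T_{\tQ}))=P^d(\CA^T_{\tQ,\tW})$ after localization. I would argue this at the level of sheaves. The map $\Psi$ arises from a morphism of complexes on the coarse moduli space, obtained by pushing forward along the semisimplification $\JH$, between vanishing-cycle cohomology and ordinary cohomology (twisted by the dimensional reduction along $\omega$). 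The perverse truncations $\ptau^{\le d}$ are functorial, so the question becomes whether the induced map on each perverse cohomology sheaf is injective after localization. Via the decomposition theorem and the BPS-sheaf description of $\JH_*$ of \cite{davison2020cohomological}, these perverse cohomology pieces are $\Sym$-powers of BPS sheaves tensored with $\HH(\BCu)$. I would try to show that $\Psi$ respects this decomposition compatibly with the corresponding pieces for $\tQ$, and is injective on each piece, using the known injectivity of $\Psi$ together with a graded-dimension count.

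If the sheaf-level strictness proves hard, I would fall back on a generic localization argument. Over $\text{Frac}(\BC[\text{Lie }T])$ one can localize to torus-fixed loci, where both filtrations are split by the PBW decomposition $\CA\cong\Sym(\fg\otimes \HH(\BCu))$. Here $P^d$ corresponds to the symmetric powers of degree $\le d$. Since $\Psi$ is an algebra map carrying $\fg^T_{\tQ,\tW}$ into $\fg^T_{\tQ}$ (Step 1 with $d=1$), it is compatible with the PBW splittings, and an injective map compatible with splittings is automatically strict. This reduces the problem to PBW compatibility of $\Psi$, which I expect to be the delicate point.
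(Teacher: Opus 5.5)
Your skeleton matches the paper's. It has four ingredients: the inclusion $\Psi(P^d)\subseteq P^d(\CA^T_{\tQ})=F^d$ from Theorems \ref{hommorphismcompatibility} and \ref{thm:main}; the wheel inclusion \eqref{eqn:intro wheel}; injectivity of $\Psi$; and surjectivity onto $\CS_{\mathrm{loc}}$ from the conjecture. On the last point, Conjecture \ref{conj:wheel} literally says only $\CS_{\mathrm{loc}}=\oCS_{\mathrm{loc}}$, so you also need the spherical generation \eqref{eqn:coha 3} of \cite{NGen}. Both that result and injectivity are known only under the strong assumption of Subsection \ref{sub:wheel}.

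You correctly isolate the crux as strictness, $\Psi(P^d(\CA^T_{\tQ,\tW}))=\mathrm{Im}(\Psi)\cap P^d(\CA^T_{\tQ})$, as in \eqref{eqn:strict}. But you never establish it, and your primary route does not work. The map $\Psi=\HH(r)^{-1}\circ \HH(s)$ is not induced by a morphism $\JH_*\varphi_{\Tr(W)}\BQ^{\vir}\to \JH_*\BQ^{\vir}$ on the coarse moduli space. The reason is that $r\colon\BQ^{\vir}\to i_*i^*\BQ^{\vir}$ is an isomorphism only on global sections (via the contracting $\BC^*$-action), not as a map of sheaves. After pushforward, $\JH_* i_*i^*\BQ^{\vir}$ is supported on the zero locus of $\Tr(W)$, while $\JH_*\BQ^{\vir}$ is not. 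This is why the paper proves even the inclusion $\Psi(P^d)\subseteq P^d$ through the characterization of Proposition \ref{prop:key2} rather than at sheaf level. In any case, a graded dimension count would not give injectivity on perverse cohomology sheaves.

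Your fallback is close to what the paper does, but it is unfinished and has two problems.

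\textbf{The grading.} $\beta u^k$ with $\beta\in\fg$ has perverse degree $1+2k$ (Proposition \ref{partialuder}). So $P^d$ corresponds to PBW monomials of \emph{weighted} degree $\le d$, not to $\Sym^{\le d}$. Also, no passage to torus-fixed loci is needed, since the PBW theorem holds globally.

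\textbf{The missing idea.} Knowing $\Psi(\fg^T_{\tQ,\tW})\subseteq \fg^T_{\tQ}$ and that $\Psi$ is multiplicative does not give compatibility with the PBW decompositions. You also need $\Psi(\beta u^k)=\Psi(\beta)u^k$, i.e.\ that $\Psi$ commutes with the $u$-action; the paper takes this from \cite[Proposition 14.1]{J}. With that input, the step you call the ``delicate point'' is immediate. $\Psi$ restricts to an injection $\fg^T_{\tQ,\tW}[u]\hookrightarrow \fg^T_{\tQ}[u]$. By Proposition \ref{integrality theorem}, $\Gr(\Psi)$ is then $\Sym$ of an injection, hence injective, and injectivity of $\Gr(\Psi)$ gives \eqref{eqn:strict}. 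Your splitting version also goes through once you use symmetrized products of elements of $\fg[u]$, which $\Psi$ then preserves. Combined with the conjecture, strictness yields the corollary.
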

\medskip

Let $\mathfrak{g}^T_{\Pi_Q}$ be the preprojective BPS Lie algebra defined in \cite{davison2022bps}. It enjoys many interesting properties, such as the fact that its size is given by Kac polynomials (\cite{Mozgovoy}, \cite{davisonkac}), it is isomorphic by \cite{botta2023okounkovs} to the positive half of the Maulik-Okounkov Lie algebra (\cite{moyangian}), and it is known by \cite{DHS} to match the generalized Kac-Moody Lie algebra generated by the intersection cohomology of the good moduli space of representations of the preprojective algebra. As
\[ P^1(\mathcal{A}^T_{\tilde{Q},\tilde{W},\mathrm{loc}}) = \mathfrak{g}^{T}_{\tilde{Q},\tilde{W},\mathrm{loc}} \simeq \mathfrak{g}^{T}_{\Pi_Q,\mathrm{loc}}, \] 
Corollary \ref{cor:intro} gives a precise description of $\mathfrak{g}^{T}_{\Pi_Q,\mathrm{loc}}$ in terms of wheel and limit conditions, and is such a step toward the full understanding of $\mathfrak{g}^T_{\Pi_Q}$.

\medskip

\subsection{Acknowledgements} We would like to thank Ben Davison,  Lucien Hennecart and Sebastian Schlegel Mejia for numerous conversations. We would like to thank Sebastian Schlegel Mejia and Tudor Pǎdurariu for many helpful comments. We gratefully acknowledge the support of the Swiss National Science Foundation grant 10005316.

\bigskip 

\section{Shuffle algebras}
\label{sec:shuffle}

We present certain combinatorial conditions on shuffle algebras, which will be shown to describe the perverse filtration on CoHAs of quivers in the next Section. However, the contents of the present section can be understood independently from the remainder of the paper. 

\medskip 

\subsection{Notations}
\label{sub:notations}

We will consider shuffle algebras in the generality of \cite{NArbitrary}, though as opposed from the trigonometric case studied in \emph{loc. cit.}, we will presently define rational shuffle algebras. The basic input for the construction will consist of:

\medskip 

\begin{itemize}

\item a finite set $I$ (typically the set of vertices of a quiver),

\medskip

\item a $\BQ$-algebra $R$ and a collection of rational functions
\begin{equation}
\label{eqn:def zeta}
\zeta_{ij}(x) \in \frac {R[x]}{x^{\delta_{ij}}}
\end{equation}
for all $i,j \in I$ (in most applications, $\zeta_{ij}$ will be a product of linear factors, one such factor per arrow from $i$ to $j$ in the quiver in the previous bullet).

\end{itemize}

\medskip

\noindent Let us assume that 
\begin{equation}
\label{eqn:zeta degree}
\zeta_{ij}(x) = (-x)^{\#_{ij}} + \Big( \text{lower order terms as } x \sim \infty \Big)
\end{equation}
for various integers $\#_{ij} \geq  - \delta_{ij}$ such that the following \emph{symmetry} property holds
\begin{equation}
\label{eqn:symmetry}
\#_{ij} = \#_{ji}
\end{equation}
for all $i,j \in I$. This allows us to introduce the symmetric bilinear form
$$
\zz \otimes \zz \xrightarrow{( \cdot, \cdot )} \BZ
$$
given by
\begin{equation}
\label{eqn:euler form}
( \bm, \bn ) = - \sum_{i,j \in I} m_i n_j \#_{ij}
\end{equation}
for all $\bm = (m_i)_{i \in I}, \bn = (n_i)_{i \in I}$. We will write
$$
\bs^i = \underbrace{(0,\dots,0,1,0,\dots,0)}_{1 \text{ on }i\text{-th spot}}
$$
for all $i \in I$, and consider the partial order $\bm \leq \bn$ if $m_i \leq n_i$ for all $i \in I$ (thus, $\bm < \bn$ means $\bm \leq \bn$ and $\bm \neq \bn$). We will also use the notation $\b0 = (0,\dots,0)$. In the present paper, we make the convention that the set $\BN$ includes 0.

\medskip

\subsection{Shuffle algebras}
\label{sub:shuffle}

We will now recall the definition of the (rational version of the) Feigin-Odesskii shuffle algebra (\cite{FO}) associated to the data $(I,\zeta_{ij})$ as in the previous Subsection. The relation between this algebra and the cohomological Hall algebras that we will consider in the next Section goes back to the seminal work \cite{KS}. Consider the vector space of polynomials in arbitrarily many variables
\begin{equation}
\label{eqn:big shuffle}
\CV = \bigoplus_{\bn \in \nn} \CV_{\bn}, \quad \text{where} \quad \CV_{(n_i \geq 0)_{i \in I}} = R[z_{i1},\dots,z_{in_i}]^{\text{sym}}_{i \in I} 
\end{equation}
Above, ``sym" refers to polynomials which are color-symmetric, i.e. symmetric in the variables $z_{i1},\dots,z_{in_i}$ for each $i \in I$ separately. The vector space $\CV$ is called the \emph{(big) shuffle algebra} when endowed with the following shuffle product:
\begin{equation}
\label{eqn:mult}
E( z_{i1}, \dots, z_{i n_i})_{i \in I} * E'(z_{i1}, \dots,z_{i n'_i})_{i \in I} = 
\end{equation}
$$
\textrm{Sym} \left[ \frac {E(z_{i1}, \dots, z_{in_i}) E'(z_{i,n_i+1}, \dots, z_{i,n_i+n'_i})}{\bn! \bn'!}
\prod_{i,j \in I} \mathop{\prod_{1 \leq a \leq n_i}}_{n_j < b \leq n_j+n_j'} \zeta_{ij} \left( z_{ia} - z_{jb} \right) \right]
$$
The word ``Sym" in \eqref{eqn:mult} refers to summing over the
\begin{equation*}
(\bn+\bn')! := \prod_{i\in I} (n_i+n'_i)!
\end{equation*}
permutations of the variables $\{z_{i1}, \dots, z_{i,n_i+n'_i}\}$ for each $i$ independently.

\medskip

\begin{example}
\label{ex:non equivariant}

When $\zeta_{ij}(x) = (-x)^{\#_{ij}}$ for all $i,j \in I$, we obtain the non-equivariant cohomological Hall algebra of the quiver with vertex set $I$ and $\#_{ij}+\delta_{ij}$ arrows from $i$ to $j$, for all vertices $i,j \in I$ (\cite[Theorem 2]{KS}). 
    
\end{example}

\medskip 

\begin{example}
\label{ex:triple shuffle}

Suppose $Q$ is a quiver with vertex set $I$ and arrow set $\edge$, and the ground ring $R$ contains distinguished elements $\{\hbar,u_{\alpha}\}_{\alpha \in \edge}$. Consider 
\begin{equation}
\label{eqn:zeta triple}
\zeta_{ij}(x) = 
\left( \frac {x-\hbar}x\right)^{\delta_{ij}} \prod_{\alpha: \oij} (-x-u_\alpha) \prod_{\alpha:\oji}(-x-\hbar+u_\alpha)
\end{equation}
for all $i,j \in I$. As we will recall in Section \ref{sec:triple}, the corresponding (big) shuffle algebra contains the localized preprojective cohomological Hall algebra associated to $Q$, at least under the strong assumption of Subsection \ref{sub:wheel}. 

\end{example}

\medskip

\subsection{Limits}
\label{sub:lim}

We will now introduce certain limit conditions on shuffle algebras, which generalize those considered in \cite[Section 2]{NExts} for the preprojective cohomological Hall algebra associated to the Jordan quiver. 

\medskip

\begin{definition}
\label{def:limit}

For any $d \geq 1$ and $\bn \in \nn$, let $F^d(\CV_{\bn})$ be the set of color-symmetric
$$
E(z_{i1},\dots,z_{in_i})_{i \in I} \in \CV_{\bn}
$$
such that for any partition
\begin{equation}
\label{eqn:partition}
\bn = \bn^1 + \dots + \bn^k
\end{equation}
(for arbitrary $k \geq 1$ and $\bn^1,\dots,\bn^k \in \nn \backslash \b0$), the polynomial
\begin{equation}
\label{eqn:add y}
E(y_1+z_{i,1},\dots,y_1+z_{i,n_i^1}, \dots,y_k+z_{i,n_i-n_i^k+1}, \dots, y_k+z_{i,n_i})_{i \in I}
\end{equation}
has total degree in $y_1,\dots,y_k$ bounded as follows
\begin{equation}
\label{eqn:degree bound}
\emph{deg}_{y_1,\dots,y_k}(E) \leq \frac {d-k}2 - \sum_{1\leq a < b \leq k} (\bn^a,\bn^b)
\end{equation}

\end{definition}

\medskip 

\noindent We will write
$$
F^d (\CV) = \bigoplus_{\bn \in \nn} F^d(\CV_{\bn})
$$
It is clear that
$$
F^1 (\CV) \subset F^2(\CV) \subset \dots \subset \CV 
$$
gives a filtration of $\CV$, i.e. $F^d(\CV) \subset F^{d+1}(\CV)$ and $\bigcup_{d=1}^{\infty} F^d (\CV) = \CV$. 

\medskip

\begin{proposition} 
\label{prop:sub algebra}

We have
$$
F^d (\CV) * F^{d'} (\CV) \subseteq F^{d+d'} (\CV )
$$
for all $d,d' \geq 1$.

\end{proposition}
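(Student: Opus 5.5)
Fix $E \in F^d(\CV_{\bn})$, $E' \in F^{d'}(\CV_{\bn'})$, and a partition $\bn+\bn' = \bm^1+\dots+\bm^k$ with all $\bm^a \neq \b0$. After substituting $z \mapsto y_a + z$ for the variables in the $a$-th block, we must bound the $y$-degree of $E*E'$ by $\frac{d+d'-k}{2} - \sum_{a<b}(\bm^a,\bm^b)$. Since the shuffle product \eqref{eqn:mult} is a sum over permutations and the degree of a sum is at most the maximum of the degrees, it suffices to bound each summand separately. A summand is determined by how each block $\bm^a$ splits as $\bm^a = \bn^a + \bn'^a$, where $\bn^a$ counts the variables of block $a$ fed into $E$ and $\bn'^a$ those fed into $E'$. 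Some of the $\bn^a$ or $\bn'^a$ may vanish. Let $A = \{a : \bn^a \neq \b0\}$ and $B = \{a : \bn'^a \neq \b0\}$, so $|A|+|B| \geq k$.

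Within a summand, $E$ receives the partition $\bn = \sum_{a\in A}\bn^a$ with shifts $y_a$, $a\in A$. Hence Definition \ref{def:limit} bounds its degree by $\frac{d-|A|}{2} - \sum_{a<b \in A}(\bn^a,\bn^b)$; similarly for $E'$ with $B$. The variables in the $y$'s are distinct shifts, so the total $y$-degree of the product is bounded by the sum of these bounds. The $\zeta$ factors $\zeta_{ij}(z_{ia}-z_{jb})$, with the $z_{ia}$ coming from $E$'s variables in block $a$ and $z_{jb}$ from $E'$'s in block $b$, have argument $y_a - y_b + (\dots)$. For $a\neq b$, \eqref{eqn:zeta degree} gives $y$-degree at most $\#_{ij}$. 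This remains true even when $\#_{ij}=-1$, reading the factor as a Laurent expansion; I would note this and treat the $i=j$ pole carefully below. For $a=b$ the $y$'s cancel, so the degree is $0$. The $\zeta$ contribution is therefore $\sum_{a\neq b} \sum_{i,j} n^a_i n'^b_j \#_{ij} = -\sum_{a \neq b}(\bn^a,\bn'^b)$.

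Now expand $-\sum_{a<b}(\bm^a,\bm^b) = -\sum_{a<b}\big[(\bn^a,\bn^b)+(\bn'^a,\bn'^b)+(\bn^a,\bn'^b)+(\bn'^a,\bn^b)\big]$. By symmetry \eqref{eqn:symmetry} the cross terms sum to exactly $-\sum_{a\neq b}(\bn^a,\bn'^b)$, and the pure terms are the ones appearing in the bounds for $E$ and $E'$; pairs involving a zero vector contribute nothing. So the total degree bound equals $\frac{d+d'-|A|-|B|}{2} - \sum_{a<b}(\bm^a,\bm^b)$, which is at most the required bound since $|A|+|B|\geq k$.

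The main obstacle is the diagonal poles: $\zeta_{ii}$ has a pole at $0$, so individual summands are rational functions, not polynomials, and "degree in $y$" must be interpreted properly. I would handle this by viewing everything in the ring of rational functions and measuring degree as homogeneous leading order as $y\to\infty$ generically (i.e. $y_a = t\,c_a$ with generic $c_a$, degree in $t$). For $a\neq b$ the denominators $z_{ia}-z_{ib}+y_a-y_b$ have exact degree $1$, and for $a=b$ they are $y$-independent. Degree in $t$ is subadditive under sums and additive under products, and for polynomials it recovers total $y$-degree. So the bound on the symmetrized polynomial follows from the summandwise bounds.
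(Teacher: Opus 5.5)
Your proposal is correct and follows the paper's proof essentially step by step. The paper likewise splits each block as $S^a_i = T^a_i \sqcup U^a_i$, bounds $E$ and $E'$ via Definition \ref{def:limit} with $\ell,\ell'$ nonempty parts, counts the $\zeta$-factors as contributing $-\sum_{a\neq b}(\bt^a,\bu^b)$, and concludes using the symmetry of the form and $\ell+\ell'\geq k$. Your extra care about the poles of $\zeta_{ii}$ (measuring degree through $y_a = t c_a$ with generic $c_a$) is a point the paper passes over silently, and it makes the summand-by-summand degree count rigorous.
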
 

\medskip

\begin{proof} Let us suppose that $E$ and $E'$ are shuffle elements (i.e. elements of the big shuffle algebra $\CV$) which satisfy the condition in Definition \ref{def:limit} for positive integers $d$ and $d'$. By the definition of the shuffle product in \eqref{eqn:mult}, $E*E'$ is the sum of 
\begin{equation}
\label{eqn:temp mult}
\frac {E(z_{i1}, \dots, z_{in_i}) E'(z_{i,n_i+1}, \dots, z_{i,n_i+n'_i})}{\bn! \bn'!}
\prod_{i,j \in I} \mathop{\prod_{1 \leq a \leq n_i}}_{n_j < b \leq n_j+n_j'} \zeta_{ij} \left( z_{ia} - z_{jb} \right)
\end{equation}
and its various permutations of variables. To check that $E*E'$ lies in $F^{d+d'}(\CV)$, we must consider all possible partitions of the variable set
$$
\Big\{ z_{i1},\dots,z_{i,n_i+n_i'} \Big\}= S_i^1 \sqcup \dots \sqcup S_i^k 
$$
add $y_a$ to the variables in the set $S_i^a$ (for all $a \in \{1,\dots,k\}$) and then estimate the total degree of the resulting polynomial in the variables $y_1,\dots,y_k$. However, any partition as above is obtained by concatenating partitions
\begin{align} 
&\Big\{ z_{i1},\dots,z_{i,n_i} \Big\}= T_i^1 \sqcup \dots \sqcup T_i^k \label{eqn:part 1} \\
&\Big\{ z_{i,n_i+1},\dots,z_{i,n_i+n_i'} \Big\}= U_i^1 \sqcup \dots \sqcup U_i^k \label{eqn:part 2} 
\end{align}
for various $S_i^a = T_i^a \sqcup U_i^a$. Let us now add $y_a$ to the variables in the sets $T_i^a$ and $U_i^a$ for all $a \in \{1,\dots,k\}$. Because of Definition \ref{def:limit}, we infer that \footnote{While in formula \eqref{eqn:add y} it might seem that we can only add $y_1,\dots,y_k$ to specific variables of $E$, the color-symmetry of $E$ allows us to permute the variables $z_{i1},\dots,z_{in_i}$ arbitrarily.}
\begin{align} 
&E(y_1+z_{i1}, \dots, y_k+z_{in_i}) \quad \text{has degree } \leq \frac {d-\ell}2 - \sum_{1 \leq a < b \leq k} (\boldsymbol{t}^a, \boldsymbol{t}^b) \label{eqn:add 1} \\
&E'(y_1+z_{i,n_i+1}, \dots, y_k+z_{i,n_i+n'_i}) \quad \text{has degree } \leq \frac {d'-\ell'}2 - \sum_{1 \leq a < b \leq k} (\boldsymbol{u}^a, \boldsymbol{u}^b) \label{eqn:add 2}
\end{align} 
where $\boldsymbol{t}^a$, $\boldsymbol{u}^a$ are the $I$-tuples of cardinalities of the sets $T_i^a$, $U_i^a$, respectively, and $\ell$, $\ell'$ denotes the number of $a \in \{1,\dots,k\}$ such that $\sum_{i \in I} |T_i^a| > 0$, $\sum_{i \in I} |U_i^a| > 0$, respectively. We clearly have
\begin{equation}
\label{eqn:easy}
\ell+\ell' \geq k
\end{equation}
Finally, the product of $\zeta$ factors in \eqref{eqn:temp mult} is of the form
\begin{equation}
\label{eqn:contribution 1}
\prod_{1 \leq a,b \leq k} \prod_{i,j \in I} \prod_{z \in T_i^a, z' \in U_j^b} \zeta_{ij}(y_a - y_b + z - z')
\end{equation}
and thus contributes precisely
\begin{equation}
\label{eqn:add 3}
- \sum_{1\leq a \neq b \leq k} (\boldsymbol{t}^a, \boldsymbol{u}^b)
\end{equation}
to the total degree in $y_1,\dots,y_k$. Adding \eqref{eqn:add 1}, \eqref{eqn:add 2}, \eqref{eqn:add 3} together shows that the total degree of \eqref{eqn:temp mult} in $y_1,\dots,y_k$ is at most
\begin{equation}
\label{eqn:amount}
\frac {d+d'-\ell-\ell'}2 - \sum_{1 \leq a < b \leq k} (\boldsymbol{t}^a+\boldsymbol{u}^a, \boldsymbol{t}^b+\boldsymbol{u}^b) 
\end{equation}
(this uses the symmetry of the bilinear form \eqref{eqn:euler form}). Because of the inequality \eqref{eqn:easy}, this is exactly what we needed to prove in order to ensure that $E * E' \in F^{d+d'} (\CV)$.
    
\end{proof}

\begin{proposition} 
\label{prop:sub lie algebra}

Let us define the following super Lie bracket \footnote{In the notation of Subsubsection \ref{subsub:sign twists}, this makes $\CV$ into a superalgebra, once we twist the shuffle product \eqref{eqn:temp mult} by $(-1)^{\psi(\bn,\bn')}$.}
$$
[E,E']^{\chi} = E*E' - (-1)^{(\bn,\bn')} E'*E
$$
for any $E \in \CV_{\bn}$, $E' \in \CV_{\bn'}$. We have
$$
[F^d (\CV) , F^{d'} (\CV)]^{\chi} \subseteq F^{d+d'-1} (\CV )
$$
for all $d,d' \geq 1$. In particular, $F^1(\CV)$ is a super Lie algebra. 

\end{proposition}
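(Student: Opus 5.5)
Fix $E \in F^d(\CV_{\bn})$, $E' \in F^{d'}(\CV_{\bn'})$, a partition $\bn+\bn' = \bm^1+\dots+\bm^k$, and the corresponding shift of the variables by $y_1,\dots,y_k$. I will redo the bookkeeping of the proof of Proposition \ref{prop:sub algebra}, but now keep track of which terms attain the bound. Write $E*E'$ and $E'*E$ as sums over splittings $S_i^a = T_i^a \sqcup U_i^a$ of the shifted variables. In $E*E'$ the $T$-variables go into $E$ and the $U$-variables into $E'$; in $E'*E$ the roles are exchanged. The estimate \eqref{eqn:amount} shows that a splitting contributes degree at most
$$
\frac{d+d'-\ell-\ell'}{2} - \sum_{a<b}(\bm^a,\bm^b).
$$
Whenever $\ell+\ell' \geq k+1$, this is already at most $\frac{(d+d'-1)-k}{2} - \sum_{a<b}(\bm^a,\bm^b)$, which is the bound required for $F^{d+d'-1}$.

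The only dangerous splittings are therefore those with $\ell+\ell'=k$. These are exactly the splittings where each block $S^a$ lies entirely in $T$ or entirely in $U$. Such a splitting is determined by a subset $A \subseteq \{1,\dots,k\}$ of blocks sent to $E$, with $\sum_{a\in A}\bm^a=\bn$. For each such $A$, I will extract the top-degree part in the $y$'s, which is the part of degree exactly $\frac{d+d'-k}{2}-\sum_{a<b}(\bm^a,\bm^b)$. It equals
$$
\overline{E}_A \cdot \overline{E'}_{A^c} \cdot \prod_{a\in A,\, b\notin A}\prod (-(y_a-y_b))^{\#_{ij}},
$$
where $\overline{E}_A$ is the leading $y$-homogeneous part of the shifted $E$. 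The $\zeta$ factors contribute only their leading terms \eqref{eqn:zeta degree}, because the other lower-degree contributions drop degree. The leading part of $\zeta_{ij}(y_a-y_b+z-z')$ is $(-(y_a-y_b))^{\#_{ij}}$. In $E'*E$, the same splitting appears with $E'$ receiving the blocks in $A^c$ and $E$ those in $A$. Its leading part is identical, except that each factor now reads $(-(y_b-y_a))^{\#_{ji}}$. Using the symmetry \eqref{eqn:symmetry}, the two products differ by the sign
$$
\prod_{a\in A,b\notin A}\prod_{i,j}(-1)^{m^a_i m^b_j \#_{ij}} = (-1)^{(\bn,\bn')},
$$
since $\sum_{a\in A,b\notin A}(\bm^a,\bm^b)=(\bn,\bn')$. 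Hence the top-degree parts cancel in $E*E'-(-1)^{(\bn,\bn')}E'*E$, including the symmetrization and the $1/\bn!\bn'!$ normalizations, which match.

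After the cancellation, every surviving term has degree at most one less than the critical value. That is at most $\frac{d+d'-k}{2}-1-\sum(\bm^a,\bm^b)$, which is less than or equal to the required $\frac{d+d'-1-k}{2}-\sum(\bm^a,\bm^b)$. Degrees here may be half-integers, but the integrality of actual degrees only helps. The main obstacle is verifying carefully that the top parts genuinely match term by term. This requires identifying the permutation summands of the two Sym's with the same $A$, and checking the sign: the factor $(-1)^{\#_{ij}}$ from reversing $y_a-y_b$ must combine over all pairs to exactly $(-1)^{(\bn,\bn')}$, with the $\delta_{ij}$ poles handled by $\#_{ii}\ge -1$ still obeying the leading-term rule. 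The final claim follows with $d=d'=1$: $F^1$ is closed under $[\cdot,\cdot]^\chi$. Super antisymmetry and the super Jacobi identity hold because $[\cdot,\cdot]^\chi$ is the supercommutator of the associative product twisted as in the footnote.
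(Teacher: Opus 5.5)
Your proposal is correct and follows essentially the same route as the paper: you reduce to splittings with $\ell+\ell'=k$ (no block shared between $E$ and $E'$), pair each such term of $E*E'$ with the matching term of $E'*E$, and use $\#_{ij}=\#_{ji}$ to show that the leading orders cancel against the sign $(-1)^{(\boldsymbol{n},\boldsymbol{n}')}$. The paper phrases the cancellation slightly differently, by factoring out the literally identical $E\cdot E'$ factor and noting that $\zeta_{ij}(x)-(-1)^{\#_{ij}}\zeta_{ji}(-x)$ has order $x^{\#_{ij}-1}$, which is exactly your top-degree cancellation.
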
 

\medskip 

\begin{proof} The proof follows that of Proposition \ref{prop:sub algebra} very closely, and the only distinction is that \eqref{eqn:contribution 1} should be replaced by 
\begin{multline}
\label{eqn:contribution 2}
\prod_{1 \leq a,b \leq k} \prod_{i,j \in I} \prod_{z \in T_i^a, z' \in U_j^b} \zeta_{ij}(y_a - y_b + z - z') - \\ \prod_{1 \leq a,b \leq k} \prod_{i,j \in I} \prod_{z \in T_i^a, z' \in U_j^b} (-1)^{\#_{ij}}\zeta_{ji}(y_b - y_a + z' - z)
\end{multline}
It suffices to focus on the case $\ell+\ell' = k$ in \eqref{eqn:easy}, which means that there are no terms involving $a=b$ in any of the products above. Therefore, the fact that
$$
\zeta_{ij}(x) - (-1)^{\#_{ij}}\zeta_{ij}(-x) \text{ has order }x^{\#_{ij}-1} \text{ as } x \sim \infty
$$
implies that the total degree in $y_1,\dots,y_k$ of \eqref{eqn:contribution 2} is at least one less than the number \eqref{eqn:add 3}, which allows us to replace $d+d'$ by $d+d'-2$ in \eqref{eqn:amount}. 

\end{proof}

\medskip 

\subsection{The key property}
\label{sub:key}

Consider the operation
\begin{equation}
\label{eqn:delta}
\CV \rightarrow \CV, \qquad E \leadsto Eu
\end{equation}
$$
E(z_{i1},\dots,z_{in_i})_{i \in I} u = E(z_{i1},\dots,z_{in_i})_{i \in I} \left(\sum_{i \in I} \sum_{a=1}^{n_i} z_{ia} \right) 
$$
The following Proposition is straightforward, so we leave it to the reader.

\medskip 

\begin{proposition}
\label{prop:delta}

The operation \eqref{eqn:delta} is a derivation of the shuffle product, i.e.
$$
(E*E')u = (Eu) * E' + E * (E'u)
$$
and sends $F^d(\CV)$ to $F^{d+2} (\CV)$ for all $d \geq 1$. 

\end{proposition}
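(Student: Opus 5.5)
The statement has two parts, and I would treat them separately: first the derivation property, then the compatibility with the filtration.

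For the derivation property, set $u_{\bn} = \sum_{i \in I} \sum_{a=1}^{n_i} z_{ia}$ for the function attached to $\CV_{\bn}$. The product $E*E'$ lives in $\CV_{\bn+\bn'}$, so $(E*E')u$ multiplies it by $u_{\bn+\bn'}$. This function is fully symmetric in all variables, so it commutes with the symmetrization in \eqref{eqn:mult}. Inside each summand \eqref{eqn:temp mult}, the first block of variables $z_{i1},\dots,z_{in_i}$ feeds $E$ and the second block $z_{i,n_i+1},\dots,z_{i,n_i+n_i'}$ feeds $E'$. We therefore split
$$u_{\bn+\bn'} = u_{\bn}(\text{first block}) + u_{\bn'}(\text{second block}).$$
The $\zeta$ factors are untouched, so after applying Sym the two terms give exactly $(Eu)*E' + E*(E'u)$. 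This part is purely formal.

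For the filtration statement, fix $E \in F^d(\CV_{\bn})$ and a partition $\bn = \bn^1+\dots+\bn^k$. Substituting $z_{ia} \mapsto y_c + z_{ia}$ for variables in the $c$-th block turns $u$ into
$$\sum_{c=1}^k |\bn^c|\, y_c + \sum_{i,a} z_{ia}, \qquad \text{where } |\bn^c| = \sum_i n^c_i.$$
This has total degree at most $1$ in $y_1,\dots,y_k$. Hence the degree of $Eu$ after the substitution is at most
$$\frac{d-k}{2} - \sum_{a<b}(\bn^a,\bn^b) + 1 = \frac{(d+2)-k}{2} - \sum_{a<b}(\bn^a,\bn^b),$$
which is precisely the bound \eqref{eqn:degree bound} with $d$ replaced by $d+2$. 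Since $E\in F^d$ gives the first bound for every partition, the second holds for every partition, so $Eu \in F^{d+2}(\CV_{\bn})$.

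Neither step is a real obstacle. The only points to check are that $u$ is color-symmetric, so that $Eu$ stays in $\CV$ and the footnote's remark on permuting variables still applies, and that the degree of a product is at most the sum of the degrees.
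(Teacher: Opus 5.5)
Your proof is correct. The paper gives no proof of its own and leaves this proposition to the reader as straightforward; your argument is the direct verification it has in mind. For the derivation property, $u$ is invariant under the color-wise symmetrization and splits additively over the two blocks of variables. For the filtration statement, the shifted $u$ has degree at most $1$ in $y_1,\dots,y_k$, which is exactly the increase from $d$ to $d+2$ in the bound.
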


\medskip 

\noindent The following is the key property of the filtration of Definition \ref{def:limit}.

\medskip 

\begin{proposition}
\label{prop:key}

For any $d \geq 1$ and $\bn \in \nn$, we have
\begin{equation}
\label{eqn:recursion 1}
E(z_{i1},\dots,z_{in_i})_{i \in I} \in F^d(\CV_{\bn} )
\end{equation}
if and only if
\begin{equation} \tag{A1}
\label{eqn:recursion 2}
\deg_y \left( E(y+z_{i1},\dots,y+z_{in_i})_{i \in I} \right) \leq \frac {d-1}2
\end{equation}
and
\begin{equation} \tag{A2}
\label{eqn:recursion 3}
E(z_{i1},\dots,z_{im_i}|z_{i,m_i+1},\dots,z_{in_i})_{i \in I} \in \sum_{d'+d'' \leq d - 2(\bm,\bn-\bm)} F^{d'}(\CV_{\bm}) \otimes F^{d''} (\CV_{\bn-\bm} )
\end{equation}
for all $\b0 < \bm < \bn$. The meaning of the notation in equation \eqref{eqn:recursion 3} is that we express $E$ as a sum of polynomials $E'(z_{i1},\dots,z_{im_i})_{i \in I} E''(z_{i,m_i+1},\dots,z_{in_i})_{i \in I}$ and require the corresponding sum of tensors $E' \otimes E''$ to lie in the specific filtration pieces mentioned in the right-hand side.

\end{proposition}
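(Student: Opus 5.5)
The plan is to prove the two directions separately. The direction ``(A1) and (A2) $\Rightarrow$ \eqref{eqn:recursion 1}'' is a direct degree count. The direction ``\eqref{eqn:recursion 1} $\Rightarrow$ (A2)'' reduces to a linear-algebra statement: the filtration induced on a tensor product is cut out by the ``joint'' degree bounds.

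\emph{Sufficiency.} Let $\bn = \bn^1+\dots+\bn^k$ be a partition. If $k=1$, the bound \eqref{eqn:degree bound} is exactly (A1). If $k \geq 2$, set $\bm = \bn^1$, so that $\b0<\bm<\bn$, and use color-symmetry to put the variables receiving $y_1$ first. By (A2), $E$ is a sum of products $E'(z_{i,\le m_i})E''(z_{i,>m_i})$ with $E' \in F^{d'}(\CV_\bm)$, $E''\in F^{d''}(\CV_{\bn-\bm})$ and $d'+d''\le d-2(\bm,\bn-\bm)$. In \eqref{eqn:add y}, only $y_1$ enters $E'$, which is the $k=1$ case of the definition, so $E'$ has degree at most $\frac{d'-1}{2}$. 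Only $y_2,\dots,y_k$ enter $E''$, with the partition $\bn^2+\dots+\bn^k$, so $E''$ has degree at most $\frac{d''-(k-1)}{2}-\sum_{2\le a<b\le k}(\bn^a,\bn^b)$. Adding these and using $(\bm,\bn-\bm)=\sum_{b\ge 2}(\bn^1,\bn^b)$ gives exactly \eqref{eqn:degree bound}.

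\emph{Necessity.} (A1) is the case $k=1$. For (A2), fix $\b0<\bm<\bn$ and take any partitions $\bm=\bn^1+\dots+\bn^{k_1}$ and $\bn-\bm=\bn^{k_1+1}+\dots+\bn^{k}$. The concatenated partition of $\bn$ has cross terms summing to $(\bm,\bn-\bm)$, so \eqref{eqn:degree bound} reads
$$
\deg_{y}\le \frac{D-k_1-k_2}{2}-\sum_{a<b\le k_1}(\bn^a,\bn^b)-\sum_{k_1<a<b}(\bn^a,\bn^b), \qquad D=d-2(\bm,\bn-\bm).
$$
It therefore suffices to prove the following lemma. Let $\widetilde F^{D}\subset \CV_\bm\otimes\CV_{\bn-\bm}$ be the set of $G(x|z)$ satisfying these joint bounds, where $y_1,\dots,y_{k_1}$ are added to the $x$'s and $y_{k_1+1},\dots,y_k$ to the $z$'s. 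Then
$$
\widetilde F^{D}=\sum_{d'+d''\le D}F^{d'}(\CV_\bm)\otimes F^{d''}(\CV_{\bn-\bm}).
$$
The inclusion $\supseteq$ is the same degree addition as in the sufficiency part.

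\emph{Main obstacle: the inclusion $\subseteq$.} This is where the work lies. All conditions are homogeneous in the $z$-degree (and in the grading of $R$, e.g.\ $R=\BC[\mathrm{Lie}\,T]$), so one can work in finite-dimensional graded pieces over a field, after base change if necessary.

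Each $F^{d'}(\CV_\bm)$ is an intersection of kernels of linear maps $\phi$, namely ``coefficient of a $y$-monomial of too high degree after a given shift''. I would choose a basis $\{b_j\}$ of the relevant graded piece of $\CV_\bm$ adapted to the flag $F^1\subset F^2\subset\cdots$, and let $\delta_j$ be the least $d'$ with $b_j\in F^{d'}$. Write $G=\sum_j b_j\otimes c_j$; the goal is $c_j\in F^{D-\delta_j}(\CV_{\bn-\bm})$.

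Take a top-degree functional $\psi$ that witnesses $c_{j}\notin F^{D-\delta_{j}}$ for a $j$ with $\delta_j$ as large as possible among the failures. Pair it with the functional $\phi$ witnessing $b_{j}\notin F^{\delta_{j}-1}$. Since the $y$-variables for $x$ and $z$ are disjoint, $\phi\otimes\psi$ is a top-degree coefficient of $G$ in the joint shift, and one wants this to contradict $G\in\widetilde F^D$. The delicate point is to ensure the other $b_{j'}$ do not cancel this coefficient. For this I would first arrange, by triangular change of basis, that the leading functionals $\phi$ are dual to the $b_j$ modulo lower filtration, which is standard for a filtration defined by kernels. Induction on $\delta_j$ should then finish the argument.
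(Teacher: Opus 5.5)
Your proof is correct and follows the paper's argument. (A1) is the $k=1$ case, (A2) comes from applying Definition \ref{def:limit} to the concatenation of partitions of $\bm$ and $\bn-\bm$, and the converse is the same degree count with $\bm=\bn^1$. The one difference is that the paper declares it ``immediate'' that the joint degree bounds on $\sum E'\otimes E''$ amount to membership in $\sum_{d'+d''\le D}F^{d'}\otimes F^{d''}$. You isolate this identification as a lemma and prove it with a filtration-adapted basis, choosing the failing index with maximal $\delta_j$ and using functionals dual to the basis within each filtration level so that the cross terms vanish. That correctly fills in the step the paper leaves implicit.
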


\medskip 

\noindent Note that Proposition \ref{prop:key} determines the filtration $F^d\CV_{\bn}$ uniquely. Indeed, \eqref{eqn:recursion 2} implies that
\begin{equation}
\label{eqn:simple}
F^d \CV_{\bs^i} = \text{span} \left\{z_{i1}^0, z_{i1}^1,\dots, z_{i1}^{\left \lfloor \frac {d-1}2 \right \rfloor} \right\}
\end{equation}
and then we use the equivalence of \eqref{eqn:recursion 1} and \eqref{eqn:recursion 2}-\eqref{eqn:recursion 3} to recursively (in $\bn$) determine $F^d \CV_{\bn}$.

\medskip 

\begin{proof} Let us begin with the implication $\Rightarrow$, by assuming $E \in F^d\CV_{\bn}$. Property \eqref{eqn:recursion 2} is the particular condition in Definition \ref{def:limit} for $k=1$, so it holds. In order to check property \eqref{eqn:recursion 3}, let us consider the operation
$$
F^d \CV_{\bn} \ni E \leadsto \Big(\text{sum of tensors } E' \otimes E''\Big) 
$$
described in the statement of the Theorem. Let us consider arbitrary partitions
\begin{align*} 
&\Big\{ z_{i1},\dots,z_{im_i} \Big\}= T_i^1 \sqcup \dots \sqcup T_i^k  \\
&\Big\{ z_{i,m_i+1},\dots,z_{in_i} \Big\}= U_i^1 \sqcup \dots \sqcup U_i^{\ell}  
\end{align*}
of the variable sets of $E'$ and $E''$, respectively. We must prove that when we add $y_a$ to the variables in $\cup_{i \in I} T_i^a$ for all $a \in \{1,\dots,k\}$ (respectively add $x_b$ to the variables in $\cup_{i \in I} U_i^b$ for all $b \in \{1,\dots,\ell\}$) the total degree of the sum of tensors $E' \otimes E''$ in $y_1,\dots,y_k, x_1,\dots,x_{\ell}$ is at most
$$
\frac {d-2(\bm,\bn-\bm)-k-\ell}2 - \sum_{1 \leq a < b \leq k} (\bt^a,\bt^b) - \sum_{1 \leq a < b \leq \ell} (\bu^a,\bu^b)
$$
However, this follows immediately from the fact that $E \in F^d\CV_{\bn}$, since we can invoke the property of Definition \ref{def:limit} for the partition
$$
\Big\{ z_{i1},\dots,z_{i,m_i}, z_{i,m_i+1},\dots,z_{i,n_i} \Big\}= T_i^1 \sqcup \dots \sqcup T_i^k \sqcup U_i^1 \sqcup \dots \sqcup U_i^{\ell}  
$$ 
For the implication $\Leftarrow$ in the statement of Proposition \ref{prop:key}, let us consider expression \eqref{eqn:add y} as a function of $y_1,\dots,y_k$ for any partition \eqref{eqn:partition}. If $k = 1$, then \eqref{eqn:recursion 2} implies the bound on the degree in $y_1$ that we need to check. If $k > 1$, then \eqref{eqn:recursion 3} for $\bm = \bn^1$ implies that the degree of \eqref{eqn:add y} in $y_1,\dots,y_k$ is
$$
\leq \frac {d'-1}2 + \frac {d''-k+1}2 - \sum_{2\leq a < b \leq k} (\bn^a,\bn^b) \leq \frac {d-k}2 - \sum_{1 \leq a < b \leq k} (\bn^a,\bn^b)
$$
which is exactly what we need to prove in order to check the condition in Definition \ref{def:limit} for $E$. \end{proof}

\bigskip 

\section{Perverse Filtrations}
\label{sec:coha}

We will now relate the combinatorial notions of the previous Section with cohomological Hall algebras (CoHAs for short) of quivers with potential. We first recall these algebras in complete generality and then prove Theorem \ref{thm:main}.

\medskip

\subsection{Stacks of quiver representations}
\label{sub:quiver}

Let $Q$ be any quiver, with vertex set $I$ and arrow set $\edge$. For any $\alpha \in \edge$, we will write $s(\alpha)$ and $t(\alpha)$ for the source and target of $\edge$, respectively. Given any dimension vector $\bn \in \nn$, define 
\begin{align*}
\mathrm{Rep}_{\bn}(Q) &:= \prod_{\alpha \in \edge} \mathrm{Hom}(\BC^{\bn_{s(\alpha)}},\BC^{\bn_{t(\alpha)}}) \\ 
\GL_{\bn} &:= \prod_{i \in I} \GL_{\bn_i} \end{align*} 
where the group $\GL_{\bn}$ acts on $\mathrm{Rep}_{\bn}(Q)$ by conjugation. Let 
\[\mathfrak{M}_{\bn}(Q) := \mathrm{Rep}_{\bn}(Q)/\GL_{\bn} \] 
be the moduli stack of $\bn$ dimensional representations of $Q$. Let 
\[\mathcal{M}_{\bn}(Q) := \Spec \left(\BC[\mathrm{Rep}_{\bn}(Q)]^{\GL_{\bn}} \right) \] 
be the coarse moduli space parameterizing semisimple points of the aforementioned stack. We then have the affinization map given by taking the direct sum of successive quotients in the Jordan-H\"older filtration of a quiver representation
\[ \JH_{\bn} \colon \mathfrak{M}_{\bn}(Q) \rightarrow \mathcal{M}_{\bn}(Q).\]
Suppose we have an algebraic torus $T$ endowed with characters
\begin{equation}
\label{eqn:elementary character}
e^{u_\alpha} : T \rightarrow \BC^*
\end{equation}
for all $\alpha \in \edge$. We can think of the exponent as an equivariant parameter
\begin{equation}
\label{eqn:equivariant parameter}
u_\alpha \in \HHT := \HH(\BT) = \BC[\text{Lie } T]
\end{equation}
The torus $T$ acts on the space of quiver representations by scaling the arrow $\alpha$ with the character $e^{u_{\alpha}}$. We then define the quotient stacks
\[ \mathfrak{M}_{\bn}^{T}(Q) := \mathrm{Rep}_{\bn}(Q)/(\GL_{\bn} \times T) \] 
and
\[ \mathcal{M}^{T}_{\bn}(Q) := \mathcal{M}_{\bn}(Q)/T\] 
which are well-defined since the $T$ and $\GL_{\bn}$ actions on $\mathrm{Rep}_{\bn}(Q)$ commute. We have an induced morphism
\[ \JH^{T}_{\bn} \colon \mathfrak{M}^{T}_{\bn}(Q) \rightarrow \mathcal{M}^{T}_{\bn}(Q). \] 
Given a quiver $Q=(I,\Omega)$, we define the opposite quiver as $Q^{\op}=(I,\Omega^{\op})$ with the same vertex set but the arrows reversed and equivariant parameters $-u_{\alpha}$. 
\medskip 

\subsection{Potential} 
\label{sub:potential}

A \emph{potential} $W$ on $Q$ is any
$\BC$-linear combination of oriented cycles in $Q$ modulo cyclic permutations. If $W$ is a single cyclic word, we define for any $\alpha \in \edge$
\[ \frac{\partial W}{\partial \alpha} = \sum_{W = c\alpha c'} c'c \] 
and we extend this definition linearly to any $W$. The \emph{Jacobi algebra} of $Q$ is
$$
\text{Jac}(Q,W) = \BC[W] \Big / \left( \frac{\partial W}{\partial \alpha} \right)_{\alpha \in \edge}
$$
Any potential $W$ induces a regular function 
\[ \Tr_{\bn}(W): \mathrm{Rep}_{\bn}(Q) \rightarrow \BC 
\] 
which is $\GL_{\bn}$ invariant due to the conjugation invariance of the trace. Therefore, we obtain an induced regular function on the moduli stack 
\[ \Tr_{\bn}(W): \mathfrak{M}_{\bn}(Q) \rightarrow \BC 
\] 
Whenever it will be clear from context, we will also use the notation $\Tr_{\bn}(W): \mathcal{M}_{\bn}(Q) \rightarrow \BC$ to denote the induced regular function on the coarse moduli space. If the potential is $T$ invariant (i.e. the sum of the torus weights of the arrows in every constituent cycle of $W$ is 0), then we obtain an induced regular function on the stack
$$
\mathrm{Tr}^T_{\bn}(W): \mathfrak{M}^T_{\bn}(Q) \rightarrow \mathbb{C}
$$ 
and similarly on the coarse moduli space $\mathcal{M}^{T}_{\bn}(Q)$.

\medskip

\subsection{Vanishing cycles and constructible sheaves} In this section, we introduce vanishing cycle sheaves with the goal of setting up the notation for later on. For details on the construction, we refer to \cite{achar}. 

\medskip

Let $X$ be a finite type separated scheme over $\BC$. We let $\mathcal{D}_{\mathrm{c}}(X)$ be the derived category of constructible complexes of $\BQ$ vector spaces over $X$. We let $\mathcal{D}^{\mathrm{b}}_{\mathrm{c}}(X)$ be the full subcategory of bounded complexes and $\mathcal{D}^+_{\mathrm{c}}(X)$ the subcategory of complexes that are bounded below. We let $\Per(X)\subset \mathcal{D}_{\mathrm{c}}(X)$ be the category of perverse sheaves. It is an abelian category realized as the heart of the perverse $t$-structure $(\pD_{\mathrm{c}}^{\leq 0}(X),\pD_{\mathrm{c}}^{\geq 0}(X))$. Let $\ptau^{\leq d}, \ptau^{\geq d}$ denote the perverse truncation functors and 
$$
\pH^d=\ptau^{\leq d}\ptau^{\geq d}[d]\colon \mathcal{D}_{\mathrm{c}}(X)\rightarrow\Per(X)
$$
the perverse cohomology functors. If $\mathcal{F}$ is a complex of constructible sheaves on $X$, we let 
$$
\pH(\mathcal{F})\coloneqq \bigoplus_{d\in\BZ}\pH^d(\mathcal{F})[-d]
$$
be its total perverse cohomology. Now let $T$ be any torus acting on the scheme $X$, with quotient stack $X/T$. The shifted perverse $t$-structure on the $T$-equivariant derived category $\mathcal{D}^{+}_{\mathrm{c}}(X/T)$ is defined by setting ${}^{\mathfrak{p}}\!\tau^{\leq d} : = {}^{\mathfrak{p}}\!\tau^{\leq d - \dim(T)}$ and ${}^{\mathfrak{p}}\!\tau^{\geq d} := {}^{\mathfrak{p}}\!\tau^{\geq d - \dim(T)}  $. Let $\Per(X/T)$ be the heart with respect to this shifted perverse structure. Explicitly, it is defined so that any constructible sheaf $\mathcal{F}$ on $X/T$ is perverse if and only if its pullback $(X \rightarrow X/T)^*\mathcal{F}$ to $X$ is perverse. In particular, if $Y \subset X$ is a smooth connected component, then the shifted constant sheaf $\BQ^{\vir}_{Y/T}:= \BQ_{Y/T}[\dim Y]$ is in  $\Per(X/T)$. Note that this is slight abuse of notation since usually, perverse sheaves on Artin stacks (See \cite{perversestacks}) are defined by descent from schemes so that if $\mathfrak{M}$ is a smooth stack, $\BQ_{\mathfrak{M}}[\dim \mathfrak{M}]$ is perverse. Hence, our definition of perverse sheaves on $X/T$ differs by a shift by the dimension of torus. 

\medskip

Suppose that $X$ is a smooth algebraic variety with an action of $T$, and let $f\colon X\rightarrow \mathbb{C}$ be a $T$ invariant regular function, then there is a perverse exact functor 
$$
\varphi_f\colon \mathcal{D}^+_{\mathrm{c}}(X/T)\rightarrow \mathcal{D}^+_{\mathrm{c}}(X/T)
$$
called the vanishing cycle functor. We refer to \cite[Section 8.6]{schapira} for the definition and \cite[Proposition 2.13]{davison2020cohomological} for the essential properties that it satisfies. \newline

\subsection{Cohomological Hall algebras}
\label{sub:coha}

We henceforth restrict to \emph{symmetric} quivers, i.e. those which have as many arrows from $i$ to $j$ as from $j$ to $i$, for any vertices $i,j$. Consider any potential $W$ on $Q$ which is invariant with respect to a torus $T$ as in Subsection \ref{sub:quiver}. Let use denote by $\varphi_{\Tr(W)}$ the vanishing cycle functor defined for the regular function $\mathrm{Tr}^T_{\bn}(W): \mathfrak{M}_{\bn}^T(Q) \rightarrow \mathbb{C}$. Then we define 
\begin{equation}
\label{eqn:coha definition}
\mathcal{A}^T_{Q,W} := \bigoplus_{\bn \in \mathbb{N}^{I}} \HH(\mathfrak{M}^T_{\bn}(Q), \varphi_{\Tr_{\bn}(W)} \BQ^{\vir})
\end{equation}
where $\BQ^{\vir}$ is the trivial perverse sheaf $\mathbb{Q}_{\mathfrak{M}^T_{\bn}(Q)}[\dim(\mathfrak{M}_{\bn})] = \mathbb{Q}_{\mathfrak{M}^T_{\bn}(Q)}[-(\bn,\bn)]$ on the smooth stack $\mathfrak{M}^T_{\bn}(Q)$ (recall the bilinear form \eqref{eqn:euler form}). The object \eqref{eqn:coha definition} will be denoted by $\CA_{Q,W}$ if the torus is trivial and $\CA^T_Q$ if the potential is 0.

\medskip 

\subsubsection{Algebra structure}In \cite{KS}, a graded associative algebra structure was constructed on $\mathcal{A}_{Q,W}$. We outline the construction. The product is defined via the following correspondence. Given two dimension vectors $\bn, \bn^{\prime} \in \nn$, let $\mathfrak{M}^{T}_{\bn,\bn^{\prime}}(Q)$ be the stack of exact sequences $0 \rightarrow \rho_1 \rightarrow \rho_2 \rightarrow \rho_3 \rightarrow 0$ where $\rho_1$ and $\rho_3$ are $T$-equivariant representations of dimensions $\bn$ and $\bn^{\prime}$ respectively. Clearly the representation $\rho_2$ is of dimension $\bn+\bn^{\prime}$. Then we consider the following correspondence diagram
\begin{equation}
\label{eqn:diagram}
\begin{tikzcd}
	& {\mathfrak{M}_{\bn,\bn^{\prime}}^{T}(Q)} & \\
	{\mathfrak{M}^{T}_{\bn}(Q) \times_{\BT} \mathfrak{M}^{T}_{\bn^{\prime}}(Q)} && {\mathfrak{M}^{T}_{\bn+\bn^{\prime}}(Q)}
	\arrow["{{q_1 \times q_3}}"', from=1-2, to=2-1]
	\arrow["{{q_2}}", from=1-2, to=2-3]
\end{tikzcd}
\end{equation}
The morphism $q_1 \times q_3$ is an affine fibration and hence induces an isomorphism
\begin{align} \label{q1q_2}
(q_1 \times q_3)^{*}& \colon \HH(\mathfrak{M}^{T}_{\bn}(Q)   \times_{\BT} \mathfrak{M}^{T}_{\bn^{\prime}}(Q),  \varphi_{\Tr_{\bn}(W) \boxplus \Tr_{\bn^{\prime}}(W)} \BQ^{\vir}) \rightarrow \nonumber \\ & \HH(\mathfrak{M}^{T}_{\bn, \bn^{\prime}}(Q), \varphi_{\Tr(W)|_{\mathfrak{M}^{T}_{\bn,\bn^{\prime}}(Q)}}\BQ)[(\bn,\bn)+(\bn^{\prime},\bn^{\prime})]  
\end{align}

The morphism $q_2$ is proper and thus induces a morphism 
\begin{align}(q_2)_{*} & \colon \HH(\mathfrak{M}^{T}_{\bn,\bn^{\prime}}(Q),  \varphi_{\Tr(W)|_{\mathfrak{M}^{T}_{\bn,\bn^{\prime}}(Q)}}\BQ)[(\bn,\bn)+(\bn^{\prime},\bn^{\prime})] \rightarrow \nonumber \\ &\HH(\mathfrak{M}^{T}_{\bn+\bn^{\prime}}(Q),\varphi_{\Tr(W)}\BQ^{\vir}). 
\end{align} 
We also have the Thom-Sebastiani isomorphism
\begin{align}
&\mathrm{TS}^{\prime}: \HH(\mathfrak{M}^{T}_{\bn}(Q)   \times_{\BT} \mathfrak{M}^{T}_{\bn^{\prime}}(Q),  \varphi_{\Tr_{\bn}(W) \boxplus \Tr_{\bn^{\prime}}(W)} \BQ^{\vir}) \nonumber \\ & \simeq \HH(\mathfrak{M}^{T}_{\bn}(Q)   \times_{\BT} \mathfrak{M}^{T}_{\bn^{\prime}}(Q),  \varphi_{\Tr_{\bn}(W)}\BQ^{\vir} \boxplus \varphi_{\Tr_{\bn^{\prime}}(W)} \BQ^{\vir})
\end{align}
We would like to apply the K\"unneth isomorphism. However, since the product is taken over $\BT$, it doesn't apply directly. We work with the following assumption.

\medskip

\begin{assumption} \label{assumption1}
$Q,W,T$ are chosen so that $\mathcal{A}^{T}_{Q,W}$ is a free module over $\HHT = \HH(\BT).$
This assumption is known to hold when $\mathcal{A}_{Q,W}$ is cohomologically pure, as explained in \cite[Section 3.6] {DW} and \cite[A.1.12]{zhu2016introductionaffinegrassmanniansgeometric}. The aforementioned purity is known when $W=0$ and when $Q,W$ is the tripled quiver with canonical cubic potential (see Section \ref{sec:triple}) by \cite{davison2022integrality}. 

\end{assumption}

\medskip 

\noindent The preceding assumption implies that we have an isomorphism of graded vector spaces 
\begin{align*}
&\mathrm{Ku} \colon \HH(\mathfrak{M}^{T}_{\bn}(Q)   \times_{\BT} \mathfrak{M}^{T}_{\bn^{\prime}}(Q),  \varphi_{\Tr_{\bn}(W)}\BQ^{\vir} \boxplus \varphi_{\Tr_{\bn^{\prime}}(W)} \BQ^{\vir}) \\ & \simeq  \HH(\mathfrak{M}^{T}_{\bn}(Q), \varphi_{\Tr_{\bn}(W)}\BQ^{\vir}) \otimes_{\HHT} \HH(\mathfrak{M}^{T}_{\bn^{\prime}}(Q), \varphi_{\Tr_{\bn^{\prime}}(W)}\BQ^{\vir})
\end{align*}
Since we will be working under Assumption \ref{assumption1}, we will denote the composition $\mathrm{Ku} \circ \mathrm{TS}^{\prime}$ by $\mathrm{TS}$. Then we can define the morphism 
\begin{equation}
\label{eqn:coha 0}
m_{\bn,\bn'} : \mathcal{A}^{T}_{Q,W,\bn} \otimes_{\HHT} \mathcal{A}^{T}_{Q,W,\bn^{\prime}} \xrightarrow{(q_1 \times q_3)^{*} \circ (q_2)_{*} \circ \mathrm{TS}^{-1}} \mathcal{A}^{T}_{Q,W,\bn+\bn^{\prime}}
\end{equation}
It was proved in \cite[Section 2.3]{KS} that taking the direct sum of \eqref{eqn:coha 0} over all $\bn,\bn^{\prime} \in \nn$ yields an associative $\HHT$-linear algebra structure on $\mathcal{A}^{T}_{Q,W}$ which is called the \emph{cohomological Hall algebra of the quiver with potential} (CoHA for short). 

\medskip

\begin{example}
\label{ex:any}

For $W=0$, it was proved in \cite[Theorem 2]{KS} that $\mathcal{A}_Q$ is isomorphic to the shuffle algebra $\mathcal{V}$ defined in Example \ref{ex:non equivariant}. If there is a torus $T$ endowed with characters \eqref{eqn:elementary character}, then the cohomological Hall algebra $\mathcal{A}^T_Q$ is isomorphic to the shuffle algebra $\mathcal{V}$ with respect to the shuffle kernel \footnote{This is not proved in \cite[Theorem 2]{KS}, however the same proof works in the presence of equivariant structure. See also \cite[Section 17]{J} and \cite[Section 10]{davison2022integrality}. } 
\begin{equation}
\zeta_{ij}(x) = \frac{\prod_{\alpha: \oij} (-x+u_\alpha)}{(-x)^{\delta_{ij}}}.
\end{equation}
We will write $F^d(\mathcal{A}^T_Q)$ for the pullback of the filtration $F^d(\CV)$ (of Definition \ref{def:limit}) under the isomorphism $\mathcal{A}^T_{Q} \cong \mathcal{V}$.
\end{example}

\medskip

\begin{example}
\label{ex:triple}

For any quiver $Q$, we can consider the double quiver $\dQ$ with the same vertex set, but twice larger arrow set: for every arrow $\alpha : \oij$ we add an arrow $\alpha^* : \oji$. Moreover, we also consider the triple quiver $\tQ$ where we add a loop $\omega_i$ at every vertex $i \in I$. In this setup, the torus acting is $T = \BC^* \times (\BC^*)^{|\edge|}$, with characters corresponding to the following sequences of integers
\begin{align*}
u_{\omega_i} &\leadsto -1 \times (0,\dots,0)\\ u_\alpha &\leadsto 0 \times \underbrace{(0,\dots,0,-1,0,\dots,0)}_{-1 \text{ on }\alpha\text{-th spot}} \\ 
u_{\alpha^{*}} &\leadsto 1 \times \underbrace{(0,\dots,0,1,0,\dots,0)}_{1 \text{ on }\alpha\text{-th spot}}\end{align*}
If we write $u_{\omega_i} = -\hbar$ for all $i$, then we have the equation $u_{\alpha^*} = \hbar - u_\alpha$. Then by example \ref{ex:any}, it follows that $\mathcal{A}^T_{\tilde{Q}}$ is isomorphic to $\mathcal{V}$ with shuffle kernel 
$$
\zeta_{ij}(x) = \left( \frac {x-\hbar}{x} \right)^{\delta_{ij}} \prod_{\alpha: \oij} (-x-u_\alpha) \prod_{\alpha:\oji}(-x-\hbar+u_\alpha) 
$$
Note that this shuffle algebra is isomorphic to $\mathcal{V}$ of Example \ref{ex:triple shuffle}. From now on, we shall identify 
$$
\CA^T_{\tQ} \cong \CV
$$
endowed with the shuffle product of Example \ref{ex:triple shuffle}.

\end{example}

\medskip 

\begin{example}
\label{ex:cubic}

With $\tQ$ as in the previous example, consider the \emph{canonical cubic potential}
$$
\tW = \sum_{\alpha \in \edge} \Big( \omega_{t(\alpha)} \alpha\alpha^* - \omega_{s(\alpha)}\alpha^*\alpha \Big) 
$$
In \cite[Appendix]{Ren:2015zua} and \cite{Yang_2019}, it was shown that 
$$
\CA^T_{\tQ,\tW} \cong \CA^T_{\Pi_Q}
$$
where the object in the right-hand side denotes the preprojective CoHA of the quiver $Q$, defined for the Jordan quiver in \cite{schiffmann2012cherednik} and for arbitary quivers in \cite{Yang_2018}. 

\end{example}

\medskip 

\subsubsection{Module structure}  \label{modulestructure}For any $\bn \in \mathbb{N}^{I}$, let 
$$
\mathcal{S}^{T}_{Q,\bn} = \mathrm{H}(\mathfrak{M}^{T}_{\bn}(Q),\mathbb{Q})
$$
(although $\mathcal{S}^{T}_{Q,\bn} = \mathcal{A}^{T}_{Q,\bn}$, we use different notation to emphasize the different roles that will be played by the objects $\CS$ and $\CA$ in \eqref{eqn:action}). Let 
\begin{equation}
\label{eqn:coproduct}
\Delta: \mathfrak{M}_{\bn}(Q)  \rightarrow \mathfrak{M}_{\bn}(Q) \times_{\BT} \mathfrak{M}_{\bn}(Q)
\end{equation}
be the diagonal embedding. Since the composition 
$$
\mathfrak{M}^{T}_{\bn}(Q)  \xrightarrow{\Delta} \mathfrak{M}^{T}_{\bn}(Q) \times_{\BT} \mathfrak{M}^{T}_{\bn}(Q) \xrightarrow{0\boxplus \Tr^T_{\bn}(W) } \mathbb{C}
$$
is the same function as $\Tr_{\bn}^T(W)$, we have that the pullback and the Thom-Sebastiani isomorphism yield a morphism of sheaves 
\[ \mathbb{Q}_{\mathfrak{M}^{T}_{\bn}(Q)} \boxtimes \varphi_{\Tr(W)}\mathbb{Q}^{\vir}_{{\mathfrak{M}^{T}_{\bn}(Q)}}  \rightarrow \Delta_{*}(\varphi_{\Tr(W)}\BQ^{\vir}_{\mathfrak{M}^{T}_{\bn}(Q)}). \] 
After taking global sections, we obtain a morphism
\begin{equation}
\label{eqn:action}
\mathcal{S}^T_{Q,\bn} \otimes_{\HHT} \mathcal{A}^{T}_{Q,W,\bn} \rightarrow \mathcal{A}^{T}_{Q,W,\bn}
\end{equation}
which coincides with the cup poduct when $W=0$, since then the vanishing cycle is the trivial sheaf and the cup product is defined to be the pullback along the diagonal morphism. It is proved in \cite[Section 2.6]{davisonlocalized} that \eqref{eqn:action} makes $\mathcal{A}^{T}_{Q,W,\bn}$ into a $\mathcal{S}^{T}_{Q,\bn}$-module, with the ring structure on $\mathcal{S}^T_{Q,\bn}$ given by cup product. 

\medskip 

\subsubsection{Coalgebra structure} We will now define the \textit{localized} coalgebra structure on the cohomological Hall algebra (we refer to \cite{davisonlocalized} and \cite{JKL} for more details). The algebra structure on the CoHA is given by pulling back along the morphism $q_1 \times q_3$ and then pushing forward along the morphism $q_2$ of \eqref{eqn:diagram}. To define the coproduct, we use the same idea, but in reverse: we pull back along the morphism $q_2$ and then push forward along the morphism $q_1 \times q_3$ to get a map in opposite direction. The only issue with this is that the morphism $q_1 \times q_3$ is not proper, so the procedure above is only well defined up to localization by certain Euler classes. To make this precise, recall the torus $T$ which acts on quiver representations with respect to characters $e^{u_\alpha}$ as in \eqref{eqn:elementary character}. Let $\GL^T_{\bn} = \GL_{\bn} \times T$ and define
\begin{equation} \label{eqn:eulerclass}
\mathrm{eu}^{T}_{\bn^{1},\bn^{2}}(\edge) :=  \prod_{\alpha \in \edge} \prod_{\substack{1 \leq a \leq n^{1}_{s(\alpha)} \\ 1 \leq b \leq n^{2}_{t(\alpha)}}} (z^{\prime \prime}_{t(\alpha)b}-z^{\prime}_{s(\alpha)a}+u_{\alpha})\in \HH^{\GL^T_{\bn^{1}}} \otimes_{\HH^T} \HH^{\GL^T_{\bn^{2}}}
\end{equation}
and 
\[ \mathrm{eu}^{T}_{\bn^{1},\bn^{2}}(I) := \prod_{i \in I} \prod_{\substack{1 \leq a \leq n^{1}_{i} \\ 1 \leq b \leq n^{2}_{i}}} (z^{\prime \prime}_{ib}-z^{\prime}_{ia}) \in \HH^{\GL^T_{\bn^{1}}} \otimes_{\HH_{T}} \HH^{\GL^T_{\bn^{2 }}}.  \] 
Above, the notation $z^{\prime}_{*}$ and $z^{\prime \prime}_{*}$ refers to the generators of the (maximal torus) equivariant cohomology of a point in the first and second tensor factors of the expression above, respectively. Given two $\nn$ graded $\HHT$ modules $V$ and $W$, we define the localized tensor product \[ V_{\bn^{1}} \tilde{\otimes} W_{\bn^{2}} := V_{\bn^{1}} \otimes_{\HHT} W_{\bn^{2}}[\mathrm{eu}^{T}_{\bn^{1},\bn^{2}}(\edge)^{-1} \mathrm{eu}^{T}_{\bn^{1},\bn^{2}}(\edge^{\mathrm{op}})^{-1}] \] and correspondingly 
\begin{equation}
\label{eqn:localized tensor product}
V \tilde{\otimes} W := \bigoplus_{\bn^{1},\bn^{2} \in \nn} (V_{\bn^{1}} \tilde{\otimes} W_{\bn^{2}})
\end{equation}
More generally, if we are given graded $\nn$ modules $V^1,\cdots,V^k$ and $1 \leq p < q \leq k$, then we define 
\[ (V^1_{\bn^1} \otimes_{\HHT} \cdots \otimes_{\HHT} V^k_{\bn^k})_{(p,q)}  : = (V^1_{\bn^1} \otimes_{\HHT} \cdots \otimes_{\HHT} V^k_{\bn^k})[\mathrm{eu}^{T,(p,q)}_{\bn^{p},\bn^{q},}(\edge)^{-1} \mathrm{eu}^{T,(p,q)}_{\bn^{p},\bn^{q}}(\edge^{\mathrm{op}})^{-1}] \] where $\mathrm{eu}^{T,(p,q)}_{\bn^{p},\bn^{q},}(\edge)$ is defined by equation \eqref{eqn:eulerclass}, except that we see variables $z^{\prime \prime}$ in $q$-th tensor and $z^{\prime}$ in $p$th tensor. Similarly for any number of pairs $(p_i,q_i)$ with $1 \leq p_i < q_i \leq k$, one can define $(V^1_{\bn^1} \otimes_{\HHT} \cdots \otimes V^k_{\bn^k})_{(p_1,q_1),\cdots,(p_r,q_r)}$ by succsessive localizations. Finally one takes the direct sum along all $\bn^i$ to define 
$$
(V^1 \otimes_{\HHT} \cdots  \otimes_{\HHT}  V^k)_{(p_1,q_1),\cdots,(p_r,q_r)}
$$
We now define the localized coproduct. Let $\bn \in \nn$ and let $\bn^{1}, \bn^{2} \in \nn$ such that $\bn^{1}+\bn^{2}=\bn$. Recall the correspondence diagram \eqref{eqn:diagram}. We consider the pullback along the morphsim $q_2$, giving a morphism of graded vector spaces: 
\begin{align*}
(q_2)^{*}  \colon \HH(\mathfrak{M}^{T}_{\bn}(Q),\varphi_{\Tr(W)}\BQ^{\vir})\rightarrow\HH(\mathfrak{M}^{T}_{\bn^1,\bn^{2}}(Q),  \varphi_{\Tr(W)|_{\mathfrak{M}^{T}_{\bn^1,\bn^{2}}(Q)}}\BQ)[-(\bn,\bn)]
\end{align*}
and the morphism $((q_1 \times q_3)^{*})^{-1}$, which is the inverse of isomorphism $q_1 \times q_3$ defined in \eqref{q1q_2}. For any $\bn^{1}+\bn^{2}=\bn$, we have the morphism
\[ \mathrm{TS} \circ \left((q_1 \times q_3)^* \right)^{-1} (q_2)^{*} \colon \mathcal{A}^{T}_{Q,W,\bn} \rightarrow \mathcal{A}^{T}_{Q,W,\bn^{1}} \otimes_{\HHT} \mathcal{A}^{T}_{Q,W,\bn^{2}}  \]
Finally, we invert the Euler class along the $0$ section of the affine fibration $q_1 \times q_3$. More precisely, we define 
\[ \Delta_{\bn^{1},\bn^{2}}:=\frac{\mathrm{eu}^{T}_{\bn^{1},\bn^{2}}(I)}{\mathrm{eu}^{T}_{\bn^{1},\bn^{2}}(\edge)}\mathrm{TS} \circ \left((q_1 \times q_3)^* \right)^{-1} (q_2)^{*} \colon \mathcal{A}^{T}_{Q,W,\bn} \rightarrow \mathcal{A}^{T}_{Q,W,\bn^{1}} \tilde{\otimes} \mathcal{A}^{T}_{Q,W,\bn^{2}}  \]
Taking the direct sum along all $\bn^1+\bn^2 = \bn$ defines the \textit{localized coproduct} \[\Delta: \mathcal{A}^{T}_{Q,W} \rightarrow  \mathcal{A}^{T}_{Q,W} \tilde{\otimes}  \mathcal{A}^{T}_{Q,W} \]
Note that localization with Euler classes does not make the localized coproduct trivial. In fact, it was proved in \cite[Proposition 4.1]{davisonlocalized} that the identity morphism $\mathcal{A}^T_{Q,W} \otimes_{\HHT} \mathcal{A}^T_{Q,W} \rightarrow \mathcal{A}^T_{Q,W} \tilde{\otimes} \mathcal{A}^T_{Q,W}$ is injective.  \newline

\subsubsection{Bialgebra structure} The algebra and (localized) coalgebra structure are compatible, in the sense that they satisfy the following compatibility. In what follows, we let $m$ denote the CoHA product. The following is proved in \cite[Proposition 5.9]{davisonlocalized} (see also \cite[Proposition 5.5]{botta2023okounkovs} for the statement in the $T$-equivariant setting).

\begin{proposition}\label{prop: coha_loc_coprod}
    The following diagram commutes 
    \begin{equation}
\begin{tikzcd}
	{\mathcal{A}^{T}_{Q,W} \otimes_{\HH^T}  \mathcal{A}^{T}_{Q,W}} & {\mathcal{A}^{T}_{Q,W}} \\
	{(\mathcal{A}^{T}_{Q,W} \otimes_{\HH^T}  \mathcal{A}^{T}_{Q,W}\otimes_{\HH^T}  \mathcal{A}^{T}_{Q,W}\otimes_{\HH^T}  \mathcal{A}^{T}_{Q,W})_{(12),(23),(34),(14)}} \\
	{(\mathcal{A}^{T}_{Q,W} \otimes_{\HH^T}  \mathcal{A}^{T}_{Q,W}\otimes_{\HH^T}  \mathcal{A}^{T}_{Q,W}\otimes_{\HH^T}  \mathcal{A}^{T}_{Q,W})_{(13),(23),(14),(24)}} \\
	{(\mathcal{A}^{T}_{Q,W} \otimes_{\HH^T}  \mathcal{A}^{T}_{Q,W}) \widetilde{\otimes} (\mathcal{A}^{T}_{Q,W} \otimes_{\HH^T}  \mathcal{A}^{T}_{Q,W})} & {\mathcal{A}^{T}_{Q,W} \tilde{\otimes} \mathcal{A}^{T}_{Q,W}}
	\arrow["m", from=1-1, to=1-2]
	\arrow["{a \circ(\Delta \otimes \Delta)}"', from=1-1, to=2-1]
	\arrow["\Delta", from=1-2, to=4-2]
	\arrow["{\id \otimes \tilde{\mathrm{sw}} \otimes \id}"', from=2-1, to=3-1]
	\arrow["{=}"', from=3-1, to=4-1]
	\arrow["{m \otimes m}"', from=4-1, to=4-2]
\end{tikzcd}
    \end{equation}
with subscripts $(ij)$ denoting localization with respect to the $i$-th and $j$-th factors,
\begin{multline*} 
a \colon (\mathcal{A}^{T}_{Q,W} \otimes_{\HH^T}  \mathcal{A}^{T}_{Q,W}\otimes_{\HH^T}  \mathcal{A}^{T}_{Q,W}\otimes_{\HH^T}  \mathcal{A}^{T}_{Q,W})_{(12),(34)} \to  \\
(\mathcal{A}^{T}_{Q,W} \otimes_{\HH^T}  \mathcal{A}^{T}_{Q,W}\otimes_{\HH^T}  \mathcal{A}^{T}_{Q,W}\otimes_{\HH^T}  \mathcal{A}^{T}_{Q,W})_{(12),(23),(34),(14)}
\end{multline*}
being the localisation morphism. Moreover, $\tilde{\mathrm{sw}}$ is the following twisting morphism for the localised tensor product
\begin{equation}
\tilde{\mathrm{sw}}_{\bn^{1},\bn^{2}} = \emph{sw} \circ (-1)^{\tau(\bn^{1}, \bn^{2})} \mathrm{eu}^{T}_{\bn^{1},\bn^{2}}(\edge^{\mathrm{op}})^{-1} \mathrm{eu}^{T}_{\bn^{1 },\bn^{2}}(\edge) 
\end{equation} 
for any two dimension vectors $\bn^{1},\bn^{2}$. Here $\mathrm{sw}$ is the usual swap morphism with the Kozul-sign given by the cohomological grading, i.e 
\[ \mathrm{sw}(\gamma_1 \otimes \gamma_2) = (-1)^{|\gamma_1||\gamma_2|} \gamma_2 \otimes \gamma_1\] where for any $\gamma \in \mathcal{A}^T_{Q,W}$, $|\gamma|$ is parity of cohomological degree and $\tau$ is the sign twist, given by 
\begin{align*}
\tau \colon \nn \times \nn & \rightarrow \BZ/2\BZ  \\ (\bn^{1},\bn^{2})  & \mapsto (\bn^{1},\bn^{1})(\bn^{2},\bn^{2}) + (\bn^{1},\bn^{2}). 
\end{align*} 

\end{proposition}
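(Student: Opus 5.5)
This compatibility is cited from \cite[Proposition 5.9]{davisonlocalized} and \cite[Proposition 5.5]{botta2023okounkovs}; if I had to prove it, I would reduce the diagram to a base-change computation on the correspondence \eqref{eqn:diagram}, in the spirit of the classical Green/Lusztig argument for Hall algebras.

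\emph{Step 1 (geometric input).} Form the fibre product of $q_2\colon \mathfrak{M}^T_{\bn,\bn'}(Q)\to\mathfrak{M}^T_{\bn+\bn'}(Q)$ with $q_2\colon \mathfrak{M}^T_{\bm,\bm'}(Q)\to\mathfrak{M}^T_{\bn+\bn'}(Q)$, where $\bm+\bm'=\bn+\bn'$. Its points are a representation $\rho$ with two subrepresentations $\rho_1$ of dimension $\bn$ and $\sigma_1$ of dimension $\bm$. Stratify it by the dimension vectors
\[
\bn^{11}=\dim(\rho_1\cap\sigma_1),\quad \bn^{12}=\bn-\bn^{11},\quad \bn^{21}=\bm-\bn^{11},\quad \bn^{22}=\bn'-\bn^{21}.
\]
Each stratum is an affine fibration over $\prod_{a,b}\mathfrak{M}^T_{\bn^{ab}}(Q)$, obtained by choosing the extension data.

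\emph{Step 2 (localization).} Rather than tracking these strata by excess intersection, I would use torus localization on the maximal torus of $\GL_{\bn+\bn'}$ together with $T$. After tensoring with fraction fields, both $(q_2)^*$ and $(q_1\times q_3)^*$ become explicit. The composite $\Delta\circ m$ then becomes a sum over shuffles, and each shuffle splits the variables into the four blocks $\bn^{ab}$. The Euler class factors between blocks fall into three groups:
\begin{itemize}
\item those from $m$ on the four blocks;
\item those from $\Delta\otimes\Delta$;
\item the swap factor $\mathrm{eu}(\edge)/\mathrm{eu}(\edge^{\op})$ between blocks $2$ and $3$, which records the difference between the normal bundles of the stratum in the two orders.
\end{itemize}
This is exactly the factor appearing in $\tilde{\mathrm{sw}}$.

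\emph{Step 3 (vanishing cycles).} To pass from $W=0$ to general $W$, note that all maps are built from $\varphi_{\Tr(W)}$ applied to pullbacks and proper pushforwards, and these commute with $\varphi$. The Thom--Sebastiani isomorphisms are compatible with the four-factor decomposition, and Assumption \ref{assumption1} supplies the Künneth isomorphisms over $\HHT$. Once the localized identity holds sheaf-theoretically on the stratification, it therefore descends to vanishing-cycle cohomology. Injectivity into the localized tensor product (\cite[Proposition 4.1]{davisonlocalized}) lets us check the equality after localization.

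\emph{Main obstacle.} I expect the hardest part to be keeping track of signs and shifts: the Koszul sign in $\mathrm{sw}$, the twist $\tau$, and the shifts $[(\bn,\bn)]$ that arise when matching the virtual dimensions of the strata. My first attempt would be to compute everything for $W=0$ in the shuffle model of Example \ref{ex:any}, where the identity is a direct symmetrization computation. I would then argue that the sign conventions are forced by that case.
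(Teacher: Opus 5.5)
The paper does not prove this statement; it quotes \cite[Proposition 5.9]{davisonlocalized} and \cite[Proposition 5.5]{botta2023okounkovs}. Your $W=0$ argument is fine: there every space is a polynomial ring, full torus localization is injective, and the identity is the standard symmetrization check in the shuffle model.

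\textbf{The gap is Step 3, the passage to general $W$.}

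First, Step 2 produces an identity between maps on cohomology after tensoring with $\mathrm{Frac}(\HH^{A\times T})$, where $A$ is the maximal torus of $\GL_{\bn+\bn'}$. It is not an identity of morphisms of complexes, so there is nothing to apply $\varphi_{\Tr(W)}$ to. Vanishing-cycle cohomology is not a functor of $\HH(\mathfrak{M}^T_{\bn}(Q),\BQ^{\vir})$, and inverting Euler classes makes sense only on cohomology, not ``sheaf-theoretically on the stratification''. Also, $\varphi$ does not commute with pullback along the non-smooth maps $q_2$ and $\oplus$; there is only a natural transformation $\oplus^*\varphi\to\varphi\,\oplus^*$.

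Second, you cannot instead run the full torus localization directly on $\mathcal{A}^T_{Q,W}$. Assumption \ref{assumption1} gives freeness over $\HHT$ only, and $\mathcal{A}^T_{Q,W,\bn}$ can have $\HH^{\GL_{\bn}\times T}$-torsion. Here is an example.
\begin{itemize}
\item Take two vertices, arrows $a\colon 1\to 2$ and $b\colon 2\to 1$, $T$ trivial, and $W=(ab)^3-2c(ab)^2+c^2ab$ with $c\neq 0$.
\item In dimension $(1,1)$ one has $\Tr W=f(ab)$ with $f(s)=s(s-c)^2$.
\item The locus $\{ab=c\}$ is a single closed $\GL_{(1,1)}$-orbit of simple representations with stabilizer the diagonal $\BC^*$. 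It is a connected component of $\mathrm{Crit}(\Tr W)\cap \Tr(W)^{-1}(0)$, and $\Tr W$ is transversally Morse along it.
\item So it contributes a direct summand $\HH(\mathrm{B}\BC^*)=\BC[u]$ of $\mathcal{A}_{Q,W,(1,1)}$ on which $z_1$ and $z_2$ both act by $u$. Hence $z_1-z_2$ kills it.
\end{itemize}
The tensor square of this summand dies after full localization. It survives in $\mathcal{A}\tilde{\otimes}\mathcal{A}$, because the cross factors $z''_b-z'_a$ act there by $u''-u'$, a non-zero-divisor. So \cite[Proposition 4.1]{davisonlocalized}, which is about inverting only $\mathrm{eu}(\edge)\mathrm{eu}(\edge^{\mathrm{op}})$, does not let you check the identity after tensoring with $\mathrm{Frac}$.

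\textbf{What is needed} is an argument carried out directly with coefficients in $\varphi_{\Tr(W)}\BQ^{\vir}$, before taking cohomology. One way to do this:
\begin{enumerate}
\item Use $((q_1\times q_3)^*)^{-1}q_2^*=\oplus^*$ and proper base change of $q_2$ against $\oplus$. This writes $\oplus^*\circ m$ as a pushforward from the stack of triples $(\sigma,\sigma',\rho_1\subset\sigma\oplus\sigma')$.
\item Localize with respect to the rank-one torus rescaling $\sigma'$. Its fixed components are exactly your four-block stacks $\mathfrak{M}^T_{\bn^{11},\bn^{21}}(Q)\times_{\BT}\mathfrak{M}^T_{\bn^{12},\bn^{22}}(Q)$. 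Only cross-block classes of the kind already allowed in $\tilde{\otimes}$ have to be inverted, which is where injectivity is available.
\item The localization theorem holds for equivariant cohomology with coefficients in any equivariant constructible complex. Thom--Sebastiani then identifies each fixed-point contribution with $m\otimes m$ after the middle swap.
\end{enumerate}

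\textbf{The signs also cannot be ``forced'' by the shuffle model.} For $W=0$ one has $|\gamma|\equiv(\bn,\bn)$, so only the product $(-1)^{|\gamma_1||\gamma_2|+\tau(\bn^1,\bn^2)}=(-1)^{(\bn^1,\bn^2)}$ is visible. For $W\neq 0$ a single dimension vector can carry classes of both parities. For example, take three loops, $W=xyz$ and $\bn=1$: then $\HH(\BC^3,\varphi_{xyz})\cong\tilde{\mathrm{H}}((\BC^*)^2)$ up to shift, which lives in two adjacent degrees. The split into the Koszul sign and $\tau$ has to come out of the shift bookkeeping in the Thom--Sebastiani and K\"unneth isomorphisms.
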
 

\subsubsection{Sign twists}
\label{subsub:sign twists}

Note that the morphism $\tilde{\mathrm{sw}}_{\bn^{1},\bn^{2}}$ is composed of two non-trivial morphisms. One is the cup product with the Euler classes and the other one  is multiplication by $(-1)^{\tau(\bn^{1},\bn^{2})}$. It is possible to get rid of this sign at cost of introducing a twisted sign in the algebra structure defining $\mathcal{A}^{T}_{Q,W}$. Given two dimension vectors $\bn^{1}, \bn^{2}$, we consider any bilinear form $\psi$ on $(\mathbb{Z}/2\mathbb{Z})^I$ so that we have \[ \psi(\bn^{1},\bn^{2}) + \psi(\bn^{2},\bn^{1}) \equiv \tau(\bn^{1},\bn^{2}) \mod 2. \] Note that such a bilinear form always exists, since $\tau(\bn,\bn) \equiv 0 \mod 2 $. Let
$$
\mathcal{A}^{T,\psi}_{Q,W} = \mathcal{A}^{T}_{Q,W}
$$
as a vector space, but with the multiplication twisted according to the formula 
$$
m^{\psi}_{\bn^{1},\bn^{2}} = (-1)^{\psi(\bn^{1},\bn^{2})} m_{\bn^{1},\bn^{2}}.
$$
With the sign twist, the algebra $\mathcal{A}^{T,\psi}_{Q,W}$ satisfies exactly the same commutative diagram as in Proposition \ref{prop: coha_loc_coprod}, but with $\tilde{\mathrm{sw}}$ therein replaced by \[\tilde{\mathrm{sw}}^{\psi}:= \mathrm{sw} \circ  \mathrm{eu}^{T}_{\bn^{1},\bn^{2}}(\edge^{\mathrm{op}})^{-1} \mathrm{eu}^{T}_{\bn^{1 },\bn^{2}}(\edge) \]

\begin{example}
When $W=0$, if $\gamma \in \mathcal{A}^T_{Q,\bn}$ then $|\gamma|= (\bn,\bn)$ (this is because $\mathcal{A}^T_{Q,\bn} = \HH(\mathfrak{M}^T_{\bn}(Q),\mathbb{Q}[-(\bn,\bn)])$ and $\HH(\mathfrak{M}^T_{\bn}(Q),\mathbb{Q})$ has even dimensional cohomology). Thus if $E \in \mathcal{A}^{T}_{Q,\bn}$ and $E^{\prime} \in \mathcal{A}^T_{Q,\bn^{\prime}}$ then the super Lie bracket \begin{align*}
[E,E^{\prime}] &:= E *^{\psi} E^{\prime} - (-1)^{|E||E^{\prime}|} E^{\prime} *^{\psi} E \\ &= (-1)^{\psi(\bn,\bn^{\prime})} E * E^{\prime} - (-1)^{\psi(\bn^{\prime},\bn)+ (\bn,\bn)(\bn^{\prime},\bn^{\prime})} E^{\prime}*E \\ &= (-1)^{\psi(\bn,\bn^{\prime})} (E * E^{\prime}- (-1)^{(\bn,\bn^{\prime})} E^{\prime}*E) 
\end{align*} 
By Proposition \ref{prop:sub lie algebra}, $F^1(\mathcal{V})$ forms a super Lie algebra in $\mathcal{A}^{T,\psi}_{Q}$ for the zeta function in Example \ref{ex:any}. 
\end{example}

\medskip

\subsection{Perverse filtration and the decomposition theorem}
\label{sub:perverse}

The semi simplification morphism $\JH^{T}_{\bn}$ is almost never proper. However, it is proved in \cite[Lemma 4.1] {davison2020cohomological} (see \cite{kinjo2024decompositiontheoremgoodmoduli} for a more general statement) that we can still apply the decomposition theorem. Specifically, the direct image $(\JH^{T}_{\bn})_{*} \varphi_{\Tr(W)}\BQ^{\vir}$ splits, i.e. we have a non-canonical isomorphism in $\mathcal{D}^{+}_c(\mathcal{M}^{T}_{\bn}(Q))$: 
\begin{equation} \label{splitting1}
(\JH^{T}_{\bn})_{*} \varphi_{\Tr(W)} \BQ^{\vir} \simeq \bigoplus_{d \in \mathbb{Z}} {}^{\mathfrak{p}} \! \mathcal{H}^{d}((\JH^{T}_{\bn})_{*} \varphi_{\Tr(W)}\BQ^{\vir})[-d]. 
\end{equation} 
Furthermore, it is proved in \cite{davison2020cohomological} and \cite{DW} (in the $T$-equivariant case) that \[{}^{\mathfrak{p}}\!\mathcal{H}^{d}((\JH^{T}_{\bn})_{*} \varphi_{\Tr(W)}\BQ^{\vir})=0 \] for all $d <1$, so the above splitting starts from $d \geq 1$. For any $d$, applying the natural transformation ${}^{\mathfrak{p}}\!(\tau^{\leq d}) \rightarrow \id$, gives a split morphism \begin{equation} \label{truncation} {}^{\mathfrak{p}}\!\tau^{\leq d}((\JH^{T}_{\bn})_{*} \varphi_{\Tr(W)} \BQ^{\vir} )\rightarrow (\JH^{T}_{\bn})_{*} \varphi_{\Tr(W)} \BQ^{\vir}
\end{equation}
Taking global sections defines the perverse filtration. 
\begin{definition}
    The perverse filtration  \[0 \subset P^{1}(\mathcal{A}^{T}_{Q,W,\bn}) \subset P^{2}(\mathcal{A}^{T}_{Q,W,\bn}) \subset \cdots \subset \mathcal{A}^{T}_{Q,W,\bn} \] of $\mathcal{A}^{T}_{Q,W,\bn}$ is defined as \[ P^{d}( \mathcal{A}^{T}_{Q,W,\bn}) := \mathrm{H}( \mathcal{M}_{\bn}^{T}(Q), {}^{\mathfrak{p}}\!\tau^{\leq d}((\JH^{T}_{\bn})_{*} \varphi_{\Tr(W)} \BQ^{\mathrm{vir}}) ) \hookrightarrow \mathcal{A}^{T}_{Q,W,\bn}. \]
for all $d \geq 1$.
\end{definition}

\medskip  

Note that the filtration is increasing since the morphism \eqref{truncation} splits. 
It is proven in \cite{davison2020cohomological} that the perverse filtration is compatible with the CoHA multiplication. \newline 

We now describe the associated graded CoHA with respect to the perverse filtration:
\begin{equation}
\label{eqn:graded coha}
\text{Gr}_P (\CA_{Q,W}^{T,\psi} ) = \bigoplus_{d=1}^{\infty} P^d \left(\CA_{Q,W}^{T} \right) / P^{d-1}\left(\CA_{Q,W}^{T} \right)
\end{equation}
Since the perverse filtration doesn't depend on the multiplication, we henceforth interpret it as a filtration on the sign twisted CoHA $\mathcal{A}^{T,\psi}_{Q,W}$. 

\medskip

\begin{definition}
The BPS Lie algebra $\fg^{T}_{Q,W}: = \bigoplus_{\bn \in \mathbb{N}^{I} \backslash {0}} \fg^{T}_{Q,W,\bn}$ is defined as \[  \fg^{T}_{Q,W,\bn} :=  P^1(\mathcal{A}^{T}_{Q,W,\bn})\] where the Lie bracket is induced from the associative algebra structure on $\mathcal{A}^{T,\psi}_{Q,W}$. The BPS sheaves are defined as 
\[ \mathcal{BPS}^{T}_{Q,W,\bn}  =  {}^{\mathfrak{p}} \! \mathcal{H}^{1}((\JH^{T}_{\bn})_{*} \varphi_{\Tr(W)}\BQ^{\vir}) \in \Per(\mathcal{M}^T_{\bn}(Q)) \]  
\end{definition}

\medskip

Thus by definition \[P^1(\mathcal{A}^{T}_{Q,W,\bn}) = \HH(\mathcal{M}^{T}_{\bn}(Q), \mathcal{BPS}^{T}_{Q,W,\bn}[-1]) \]
Note that it is not obvious why the subspace $\fg^{T}_{Q,W} \subset \mathcal{A}^{T,\psi}_{Q,W}$ is preserved under the Lie bracket. This is proved in \cite[Corollary 6.11]{davison2020cohomological} and is in fact a consequence of the PBW theorem that we now turn to.

\medskip 

\subsection{The PBW theorem}

In Section \ref{modulestructure}, we defined an action of the CoHA
$$
\mathcal{S}^T_Q \cong \bigoplus_{\bn \in \nn} \HHT[z_{i1},\dots,z_{in_i}]_{i \in I}^{\sym}
$$
on $\mathcal{A}^{T}_{Q,W}$ and thus on $\mathcal{A}^{T,\psi}_{Q,W}.$ Consider the subring $\mathbb{C}[u] \hookrightarrow \mathcal{S}^T_Q$ where $u \mapsto \sum_{i \in I} \sum_{a=1}^{n_i} z_{ia}$. Let 
\begin{equation}
\label{eqn:bps into a}
\fg^T_{Q,W}[u] = \mathbb{C}[u] \cdot \fg^{T}_{Q,W} \hookrightarrow \mathcal{A}^{T,\psi}_{Q,W}
\end{equation}
The following theorem is proved in \cite[Theorem C]{davison2020cohomological}, and is adapted to the $T$-equivariant case in \cite[Theorem 3.4]{DW} and \cite[Corollary 2.3] {hennecart2026degenerationscohas2calabiyaucategories}.

\begin{proposition}\label{integrality theorem}
    There is an isomorphism of graded $\HHT$-algebras 
    \[ \mathrm{Sym}_{\HHT}(\fg^{T}_{Q,W}[u]) \simeq \mathrm{Gr}_{P}(\mathcal{A}^{T,\psi}_{Q,W})\]
\end{proposition}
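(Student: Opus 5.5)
The plan is to deduce the statement from the sheaf-theoretic form of cohomological integrality over the coarse moduli spaces, and then pass to global sections while tracking perverse degrees. I will first work in the non-equivariant setting and handle $T$ at the end.

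\textbf{Step 1: the monoidal structure on coarse moduli.} Direct sum gives finite maps $\oplus : \mathcal{M}^T_{\bn'}(Q) \times_{\BT} \mathcal{M}^T_{\bn''}(Q) \to \mathcal{M}^T_{\bn'+\bn''}(Q)$. These make $\bigoplus_{\bn} \mathcal{D}^+_c(\mathcal{M}^T_{\bn}(Q))$ a symmetric monoidal category with product $\mathcal{F} \boxtimes_\oplus \mathcal{G} := \oplus_*(\mathcal{F} \boxtimes \mathcal{G})$.

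Since $\oplus$ is finite, $\oplus_*$ is perverse $t$-exact. Together with Thom--Sebastiani, this means $\boxtimes_\oplus$ adds perverse degrees. Hence $\mathrm{Sym}_{\boxtimes_\oplus}$ of a complex concentrated in a single perverse degree is computed summand by summand.

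\textbf{Step 2: sheaf-level integrality.} Here I invoke the cohomological integrality isomorphism of \cite{davison2020cohomological} (and \cite{DW} equivariantly):
\[
\bigoplus_{\bn}(\JH^T_{\bn})_*\varphi_{\Tr(W)}\BQ^{\vir} \simeq \mathrm{Sym}_{\boxtimes_\oplus}\Big(\bigoplus_{\bn\neq \b0}\mathcal{BPS}^T_{Q,W,\bn}\otimes \HH(\BCu)[-1]\Big).
\]
The main ingredient is the vanishing ${}^{\mathfrak{p}}\!\mathcal{H}^{d}=0$ for $d<1$, already recorded above. In the factor $\mathcal{BPS}^T_{Q,W,\bn}\otimes \HH(\BCu)[-1]$, the generator $u^j$ sits in perverse degree $1+2j$. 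By Step 1, the $k$-th symmetric power of these factors then lands in perverse degrees $\geq k$, with the degree exactly computed by the sum over the factors. This identifies ${}^{\mathfrak{p}}\!\tau^{\leq d}$ of the left-hand side with the piece of $\mathrm{Sym}$ spanned by monomials of total perverse weight $\leq d$.

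Taking hypercohomology, and using Assumption \ref{assumption1} to make Künneth work over $\HHT$, gives an isomorphism of \emph{filtered graded $\HHT$-modules}
\[
\mathrm{Sym}_{\HHT}(\fg^T_{Q,W}\otimes\BC[u]) \simeq \CA^{T}_{Q,W},
\]
where $u^j\fg^T_{Q,W}$ has weight $1+2j$. One point to check is that the $\BC[u]$-factor matches the action of $u=\sum z_{ia}$ through the $\CS^T_Q$-module structure \eqref{eqn:action}; this follows because the $\HH(\BCu)$-factor comes from the scalar automorphisms of the $\GL_{\bn}$-stabilizer.

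\textbf{Step 3: upgrading to an algebra isomorphism.} The decisive point, and the main obstacle, is to show that the CoHA multiplication sends $P^{d}\otimes P^{d'}$ into $P^{d+d'}$ and that the induced product on $\Gr_P$ is supercommutative for the $\psi$-twisted product. Filtration compatibility comes from a sheaf-level version of $m$: a morphism
\[
(\JH)_*\varphi\BQ^{\vir}\boxtimes_\oplus(\JH)_*\varphi\BQ^{\vir} \to (\JH)_*\varphi\BQ^{\vir}
\]
built from the correspondence \eqref{eqn:diagram}, which is compatible with the perverse truncations by Step 1.

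For commutativity I would first try the localized coproduct, via Proposition \ref{prop: coha_loc_coprod}. Inverting the Euler classes, $m$ and $m\circ\mathrm{sw}^\psi$ differ by the factor $\mathrm{eu}(\edge^{\op})^{-1}\mathrm{eu}(\edge)$. Over the off-diagonal locus this factor restricts to $1$ on ${}^{\mathfrak{p}}\!\mathcal{H}$-top parts; this is the analogue of the computation in Proposition \ref{prop:sub lie algebra}, where the difference drops degree by one. Hence $[P^d,P^{d'}]\subseteq P^{d+d'-1}$.

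Once this holds, the inclusion \eqref{eqn:bps into a} extends to an algebra map $\mathrm{Sym}_{\HHT}(\fg^T_{Q,W}[u])\to\Gr_P(\CA^{T,\psi}_{Q,W})$. Comparing with the module isomorphism of Step 2 shows this map is an isomorphism on each graded piece. Here I would use induction on $\bn$ and the fact that the PBW monomials map to the corresponding $\mathrm{Sym}$ summands modulo lower perverse degree. This completes the proof.
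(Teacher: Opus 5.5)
The paper does not prove this statement; it imports it from \cite[Theorem C]{davison2020cohomological}, with the $T$-equivariant versions in \cite[Theorem 3.4]{DW} and \cite[Corollary 2.3]{hennecart2026degenerationscohas2calabiyaucategories}. Your Steps 1--2 and the multiplicativity $P^d\cdot P^{d'}\subseteq P^{d+d'}$ are fair summaries of cited inputs. Step 3, which you rightly call decisive, has two genuine gaps.

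\textbf{Gap 1: supercommutativity of $\Gr_P(\CA^{T,\psi}_{Q,W})$ is not established.} Proposition \ref{prop: coha_loc_coprod} only relates $\Delta\circ m$ to $(m\otimes m)\circ(\id\otimes\tilde{\mathrm{sw}}\otimes\id)\circ(\Delta\otimes\Delta)$. It does not say that $m\circ\mathrm{sw}^\psi$ equals $m$ times $\mathrm{eu}(\edge)\mathrm{eu}(\edge^{\op})^{-1}$, and no such global identity holds.
\begin{itemize}
\item Already for $W=0$, $E*E'$ and $E'*E$ are symmetrizations of $E(z')E'(z'')\zeta(z'-z'')$ and $E(z')E'(z'')\zeta(z''-z')$. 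The Euler ratio therefore sits inside $\Sym$, not in front of it.
\item Turning ``lower polynomial degree'' into ``lower perverse degree'' is exactly what Proposition \ref{prop:sub lie algebra} does with $F^\bullet$. But $F^\bullet=P^\bullet$ is proved in this paper via Propositions \ref{prop:key2} and \ref{prop:primitivecalculation}, i.e.\ through the statement you are proving.
\item For general $W$ there is no shuffle formula at all.
\item The coproduct cannot detect $ab\mp ba\in P^{d+d'-1}$ directly, because its component $\Delta_{\bn+\bn',\b0}$ is the identity.
\item The claim that the factor ``restricts to $1$ on the off-diagonal locus'' would require showing that maps between summands of $\pH$ are determined on a dense open of their supports, and then computing the relative product there. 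None of this is supplied.
\end{itemize}

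\textbf{Gap 2: the final comparison is circular, even granting supercommutativity.} The isomorphism of Step 2 is not built from $m$. Equal graded dimensions give an isomorphism only once you know $\Sym_{\HHT}(\fg^T_{Q,W}[u])\to\Gr_P$ is injective or surjective. Your claim that ``PBW monomials map to the corresponding $\Sym$ summands modulo lower perverse degree'' is precisely that missing content.

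\textbf{A possible fix.} Both gaps can be closed using only ingredients from Section \ref{sec:coha}, by exploiting the coproduct through its primitives rather than attacking commutativity head-on.
\begin{enumerate}
\item By Propositions \ref{prop:coproductpreserve} and \ref{prop: eulerclassesonperversefiltration}, the twist $\tilde{\mathrm{sw}}^\psi$ becomes the plain super swap in $\Gr_P$. Hence the subalgebra $B$ generated by the primitives $\mathcal{P}$ is a connected cocommutative bialgebra, and Milnor--Moore gives $B\cong U(\mathcal{P})$.
\item The support argument ($\pH^1$ is an $\mathcal{IC}$ sheaf with full support, while the target of $\oplus^*$ is supported on a proper closed subset) gives $\fg^T_{Q,W}[u]\subseteq\mathcal{P}$.
\item Comparing graded ranks over $\HHT$ (Assumption \ref{assumption1}),
\[
\mathrm{grdim}\,\Sym(\fg^T_{Q,W}[u])\le\mathrm{grdim}\,\Sym(\mathcal{P})=\mathrm{grdim}\,B\le\mathrm{grdim}\,\Gr_P .
\]
Step 2 says the two ends agree, which forces $\mathcal{P}=\fg^T_{Q,W}[u]$ and $B=\Gr_P$.
\item Supercommutativity then follows from parity. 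The super-commutator of two primitives is primitive and lies in perverse degree $d+d'$, which is even. But $\fg^T_{Q,W}u^j$ lies in odd perverse degree $1+2j$. So all brackets vanish in $\Gr_P$, and $\Gr_P=U(\fg^T_{Q,W}[u])=\Sym_{\HHT}(\fg^T_{Q,W}[u])$.
\end{enumerate}
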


\medskip 

\noindent Since $\mathrm{Gr}_{P}(\mathcal{A}^{T,\psi}_{Q,W}) \simeq \mathcal{A}^T_{Q,W}$ as graded $\HHT$-modules, Proposition \ref{integrality theorem} implies that the morphism of graded $\HHT$ modules
\[\Sym_{\HHT}(\fg^T_{Q,W}[u]) \rightarrow \mathcal{A}^T_{Q,W} \] defined by extending \eqref{eqn:bps into a} multiplicatively is an isomorphism, which is commonly referred to as the cohomological integrality theorem. 

\medskip

The perverse filtration is, by definition, respected by the action of the graded ring $\HHT$. Furthermore, in \cite[Lemma 5.8]{davison2020cohomological} it is proved that 

\begin{proposition}\label{prop: eulerclassesonperversefiltration}
Let $V$ be any $T$-equivariant vector bundle on $\mathfrak{M}_{\bn}(Q)$, and let $\mathrm{eu}^T(V)$ be its equivariant Euler class. Then
\[ \mathrm{eu}^T(V) \cdot P^d(\mathcal{A}^{T}_{Q,W,\bn}) \subset P^{d+ 2 \mathrm{rank}(V)}(\mathcal{A}^{T}_{Q,W,\bn}).\]
\end{proposition}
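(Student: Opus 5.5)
The plan is to prove the inclusion at the level of constructible complexes on $\mathcal{M}^T_{\bn}(Q)$, using only functoriality of perverse truncation. In fact the argument does not use anything specific to Euler classes: it should show that cup product with \emph{any} class $c \in \HH^{k}(\mathfrak{M}^T_{\bn}(Q),\BQ)$ sends $P^d$ into $P^{d+k}$. The statement then follows by taking $c = \mathrm{eu}^T(V)$, which has $k = 2\,\mathrm{rank}(V)$.

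The first step is to realize the action of $c$ on $\mathcal{A}^T_{Q,W,\bn}$, defined in Subsubsection \ref{modulestructure}, as a morphism of complexes. Write $\mathfrak{M} = \mathfrak{M}^T_{\bn}(Q)$.
\begin{itemize}
\item The class $c$ is a morphism $\BQ_{\mathfrak{M}} \to \BQ_{\mathfrak{M}}[k]$ in $\mathcal{D}^+_c(\mathfrak{M})$.
\item Tensoring with $\BQ^{\vir}$ gives $c\colon \BQ^{\vir} \to \BQ^{\vir}[k]$.
\item Applying the vanishing cycle functor, which is additive and commutes with shifts, gives
\[
\varphi_{\Tr(W)}(c)\colon \varphi_{\Tr(W)}\BQ^{\vir} \to \varphi_{\Tr(W)}\BQ^{\vir}[k].
\]
\end{itemize}
I then need to check that on global sections this morphism agrees with the action \eqref{eqn:action}. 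That action was built from the diagonal pullback $\Delta$ together with the Thom--Sebastiani isomorphism for $0 \boxplus \Tr(W)$. Since the first factor carries the zero function, Thom--Sebastiani reduces to the natural isomorphism $\varphi_{0\boxplus f}(\BQ\boxtimes \mathcal{F}) \cong \BQ \boxtimes \varphi_f\mathcal{F}$. Restricting along $\Delta$ should then recover exactly $\varphi_f$ applied to the cup product map $\BQ\otimes\mathcal{F}\to\mathcal{F}$. This identification is the step I expect to be the main obstacle: one has to unwind the definition in \cite{davisonlocalized} and check that no extra sign or shift enters. I would handle it by naturality of Thom--Sebastiani in the first variable, together with the fact that $\varphi_0$ is the identity up to the standard conventions.

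Next I push forward along $\JH = \JH^T_{\bn}$. This gives a morphism $K \to K[k]$, where
\[
K = (\JH)_*\varphi_{\Tr(W)}\BQ^{\vir} \in \mathcal{D}^+_c(\mathcal{M}^T_{\bn}(Q)).
\]
Taking global sections of this morphism recovers the action of $c$ on $\mathcal{A}^T_{Q,W,\bn}$. By functoriality of ${}^{\mathfrak{p}}\!\tau^{\leq d}$ there is a commutative square whose horizontal arrows are the truncation maps:
\[
\begin{array}{ccc}
{}^{\mathfrak{p}}\!\tau^{\leq d}K & \longrightarrow & K\\
\big\downarrow & & \big\downarrow\\
{}^{\mathfrak{p}}\!\tau^{\leq d}(K[k]) & \longrightarrow & K[k]
\end{array}
\]
Here ${}^{\mathfrak{p}}\!\tau^{\leq d}(K[k]) = ({}^{\mathfrak{p}}\!\tau^{\leq d+k}K)[k]$. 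This identity is unaffected by the shift by $\dim T$ in the perverse $t$-structure on $\mathcal{M}^T_{\bn}(Q)$, since that shift is uniform.

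Finally I take $\HH(\mathcal{M}^T_{\bn}(Q),-)$ of the square. The image of $P^d(\mathcal{A}^T_{Q,W,\bn})$ under $c\cdot$ then factors through $\HH\big(({}^{\mathfrak{p}}\!\tau^{\leq d+k}K)[k]\big)$. Accounting for the cohomological degree shift by $k$, this is exactly $P^{d+k}(\mathcal{A}^T_{Q,W,\bn})$ viewed inside $\mathcal{A}^T_{Q,W,\bn}$. The truncation maps are split injective on global sections by \eqref{splitting1}, so these subspaces really are the filtration pieces. Setting $c=\mathrm{eu}^T(V)$ and $k=2\,\mathrm{rank}(V)$ gives the claim.
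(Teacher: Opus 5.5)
Your proof is correct. The paper gives no argument of its own here; it only cites \cite[Lemma 5.8]{davison2020cohomological}. Your reasoning is the standard one behind that result: you realize cup product with $c$ as a morphism $K\to K[k]$ in $\mathcal{D}^+_c(\mathcal{M}^T_{\bn}(Q))$, then combine naturality of ${}^{\mathfrak{p}}\!\tau^{\leq d}\to\id$ with ${}^{\mathfrak{p}}\!\tau^{\leq d}(K[k])=({}^{\mathfrak{p}}\!\tau^{\leq d+k}K)[k]$. This in fact proves the stronger statement for any class of cohomological degree $k$, not only Euler classes.

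The step you flag as the main obstacle is harmless. You do not need to identify the action with $\varphi_{\Tr(W)}(c)$ exactly. Precompose the sheaf map defining \eqref{eqn:action} with $c[-k]\boxtimes\id$ and apply the $(\Delta^*,\Delta_*)$ adjunction; this already gives a morphism $\varphi_{\Tr(W)}\BQ^{\vir}\to\varphi_{\Tr(W)}\BQ^{\vir}[k]$ inducing $c\cdot(-)$ on global sections. Any sign discrepancy is irrelevant, since $P^{d+k}$ is a linear subspace.
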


\medskip 

\subsection{The automorphism action} \label{defn:act}

Consider the morphism 
\begin{align*}
\act:  \BCu \times \mathfrak{M}^{T}_{\bn}(Q) \rightarrow \mathfrak{M}^{T}_{\bn}(Q)
\end{align*} 
defined by sending the tautological line bundle $\mathcal{L}$ on $\BCu$ and any family of representations $\mathcal{F} \in \mathfrak{M}^{T}_{\bn}(Q)$ to $\mathcal{F \otimes L} \in \mathfrak{M}^{T}_{\bn}(Q)$. Pulling back and taking global sections give a map of cohomologically graded vector spaces
\[ \act^*_y: \mathrm{H}(\mathfrak{M}^{T}_{\bn}(Q),\varphi_{\Tr(W_{\bn})}\mathbb{Q}^{\vir}_{\mathfrak{M}^T_{\bn}(Q)})  \rightarrow \mathrm{H}( \BCu,\BQ) \otimes \mathrm{H}(\mathfrak{M}^{T}_{\bn}(Q),\varphi_{\Tr(W_{\bn})}\mathbb{Q}^{\vir}_{\mathfrak{M}^T_{\bn}(Q)})
\] where the variable $y$ means that we are identifying $\HH(\BCu)$ with $\mathbb{C}[y]$. 
In particular, for any $\gamma \in \mathcal{A}^{T}_{Q,W}$ we have \[\act^{*}_y(\gamma) = \sum_{n \geq 0} y^n \otimes \gamma_{(n)} \] for suitable $\gamma_{(n)} \in \mathcal{A}^{T}_{Q,W}$. This allows us to define the linear map \[\partial \colon \mathcal{A}^{T}_{Q,W} \rightarrow \mathcal{A}^{T}_{Q,W}\] by $\gamma \mapsto \gamma_{(1)}$. For example, when $W=0$, the linear map $\partial$ is explicitly given by extracting the linear term in $y$ from formula \eqref{eqn:explicit}. Let $\tau: \mathrm{B}\mathbb{C}^* \otimes \mathrm{B}\mathbb{C}^* \rightarrow \mathrm{B}\mathbb{C}^*$ defined by $(\mathcal{L}_1,\mathcal{L}_2) \mapsto \mathcal{L}_1 \otimes \mathcal{L}_2$. Then the following diagram commutes 
\[\begin{tikzcd}
	{\mathrm{B}\mathbb{C}^* \times \mathrm{B}\mathbb{C}^*\times  \mathfrak{M}^{T}_{\bn}(Q)} & {\mathrm{B}\mathbb{C}^* \times \mathfrak{M}^T_{\bn}(Q)} \\
	{\mathrm{B}\mathbb{C}^* \times \mathfrak{M}^T_{\bn}(Q)} & {\mathfrak{M}^T_{\bn}(Q)}
	\arrow["{\id \times \mathrm{act}}", from=1-1, to=1-2]
	\arrow["{\tau \otimes \mathrm{id}}"', from=1-1, to=2-1]
	\arrow["{\mathrm{act}}", from=1-2, to=2-2]
	\arrow["{\mathrm{act}}"', from=2-1, to=2-2]
\end{tikzcd}\]
Thus, if we identify the cohomology of the first $\mathrm{B}\mathbb{C}^*$ with $\mathbb{C}[y]$ and that of the second $\mathrm{B}\mathbb{C}^*$ with $\mathbb{C}[z]$, then it follows by pullback that \[ (1 \otimes \mathrm{act}^*_z )(\mathrm{act}^*_y) = \mathrm{act}^*_{y \otimes1+1 \otimes z} \]
Comparing coefficients, we infer that 
\[ \act^{*}_y(\gamma) = \mathrm{exp}^{y \partial }(\gamma) = \gamma + y \partial(\gamma) + \frac{y^2 \partial^2(\gamma)}{2!}+ \cdots \]
Note that the sum above is finite since the operator $\partial$ decreases cohomological degree by $2$ and $\mathcal{A}^T_{Q,W}$ is bounded below in the cohomological degree. In fact, this morphism also interacts well with the perverse filtration, as in the following result (\cite[Proposition 10.2]{J} and \cite[Proposition 4.5]{DW}). 

\medskip 

\begin{proposition}\label{partialuder}
The action of $\partial$ decreases the perverse degree by $2$, i.e we have 
\begin{align*}
\partial\left( P^{d}(\mathcal{A}^{T}_{Q,W}) \right) & \subset P^{d-2}(\mathcal{A}^{T}_{Q,W})
\end{align*} 
for all $d$. In particular, if $\gamma \in P^d(\mathcal{A}^T_{Q,W})$ then 
\begin{equation}
\label{eqn:equation action}
\emph{deg}_y \left( \mathrm{act}^{*}(\gamma) \right) \leq \frac {d-1}2
\end{equation}
Moreover, if
$$
\gamma = \beta u^k 
$$
for $\beta \in \fg^{T}_{Q,W,\bn}$, then $\partial(\gamma) = k|\bn| \beta u^{k-1}$ where $|\bn|=\sum_{i \in I } n_i$. 

\end{proposition}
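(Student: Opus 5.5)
The plan is to realize $\partial$ at the level of complexes on the coarse space $\mathcal{M}^T_{\bn}(Q)$, and then use the decomposition \eqref{splitting1} together with the vanishing of negative $\mathrm{Ext}$'s between perverse sheaves.

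\textbf{Step 1: a sheaf-level lift of $\act^*$.} The scalar automorphisms $\BC^*\subset \GL_{\bn}$ act trivially on semisimplifications, so $\JH^T_{\bn}\circ \act = \JH^T_{\bn}\circ \mathrm{pr}_2$ as maps $\BCu \times \mathfrak{M}^T_{\bn}(Q) \to \mathcal{M}^T_{\bn}(Q)$. Write $\mathcal{F} = \varphi_{\Tr(W)}\BQ^{\vir}$. Since $\Tr(W)$ is invariant under $\act$, and $\act$ and $\mathrm{pr}_2$ are smooth of the same relative dimension, vanishing cycles commute with smooth pullback and give $\act^*\mathcal{F} \cong \mathrm{pr}_2^*\mathcal{F}$. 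The unit $\mathcal{F} \to \act_*\act^*\mathcal{F}$, pushed forward along $\JH^T_{\bn}$, then gives a morphism
\[
(\JH^T_{\bn})_*\mathcal{F} \to (\JH^T_{\bn})_*(\mathrm{pr}_2)_*\mathrm{pr}_2^*\mathcal{F} \cong (\JH^T_{\bn})_*\mathcal{F}\otimes \HH(\BCu) = \bigoplus_{n\geq 0} (\JH^T_{\bn})_*\mathcal{F}[-2n]\, y^n .
\]
Taking global sections of this morphism recovers $\act^*_y$. Its $y^1$-component is a morphism $\tilde\partial : K \to K[-2]$, where $K = (\JH^T_{\bn})_*\mathcal{F}$, and $\tilde\partial$ induces $\partial$ on global sections. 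I expect this step to be the main technical obstacle: one must check the base change and Thom–Sebastiani compatibilities for vanishing cycles on these stacks, and that the decomposition-theorem formalism of \cite[Lemma 4.1]{davison2020cohomological} applies to $\mathrm{pr}_2$-pushforwards.

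\textbf{Step 2: perverse degree drops by $2$.} Using the splitting $K \simeq \bigoplus_{e\geq 1}\pH^e(K)[-e]$, the component of $\tilde\partial$ from $\pH^d(K)[-d]$ to $\pH^e(K)[-e-2]$ lies in
\[
\mathrm{Hom}\big(\pH^d(K), \pH^e(K)[d-e-2]\big).
\]
This group vanishes when $d-e-2<0$, because negative $\mathrm{Ext}$'s between perverse sheaves vanish. Hence $\tilde\partial$ maps ${}^{\mathfrak p}\tau^{\leq d}K$ into ${}^{\mathfrak p}\tau^{\leq d-2}K$, and taking global sections gives $\partial(P^d)\subset P^{d-2}$. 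Iterating, $\partial^n\gamma \in P^{d-2n}$, which vanishes once $d-2n<1$. Combined with $\act^*_y(\gamma) = \exp(y\partial)\gamma$, this yields \eqref{eqn:equation action}.

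\textbf{Step 3: the formula for $\partial(\beta u^k)$.} Take $\beta\in\fg^T_{Q,W,\bn} = P^1$. Step 2 gives $\partial\beta\in P^{-1}=0$, so $\act^*_y\beta=\beta$. Next, $\act^*$ is compatible with the $\mathcal{S}^T_{Q,\bn}$-module structure \eqref{eqn:action}, since that structure is built from diagonal pullback, which commutes with $\act$. On $\mathcal{S}^T_{Q,\bn}$, the map $\act^*$ sends each Chern root $z_{ia}$ to $z_{ia}+y$, because tensoring with $\mathcal{L}$ shifts every Chern root. Therefore $\act^*u = u+|\bn|y$, and
\[
\act^*_y(\beta u^k) = \beta\,(u+|\bn|y)^k .
\]
Extracting the coefficient of $y$ gives $\partial(\beta u^k) = k|\bn|\,\beta u^{k-1}$.
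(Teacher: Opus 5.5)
Your proof is correct. The paper gives no argument of its own for this proposition. It records the identity $\act^*_y=\exp(y\partial)$ just before the statement and then cites \cite[Proposition 10.2]{J} and \cite[Proposition 4.5]{DW}, so your proposal supplies a self-contained proof of what the paper outsources. That proof has three steps: lift $\act^*_y$ to a morphism $K\to K\otimes\HH(\BCu)$ on $\mathcal{M}^T_{\bn}(Q)$; read off the perverse shift of its $y$-coefficient from the $t$-structure; and obtain $\partial(\beta u^k)$ from the multiplicativity of $\act^*$, together with $\act^*u=u+|\bn|y$ and $\partial\beta\in P^{-1}=0$.

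Two simplifications are worth recording.

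First, Step 1 is less delicate than you fear. The maps $\JH^T_{\bn}\circ\act$ and $\JH^T_{\bn}\circ\mathrm{pr}_2$ are induced by the same homomorphism $\BC^*\times\GL_{\bn}\times T\to T$ and the same map $\mathrm{Rep}_{\bn}(Q)\to\mathcal{M}_{\bn}(Q)$. So they agree on the nose, even though $\mathcal{M}^T_{\bn}(Q)$ has $T$-stabilizers. The only other inputs are smooth pullback for $\varphi$ and K\"unneth for $\BCu$.

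Second, Step 2 does not need the non-canonical splitting. It also avoids the fact, implicit in your componentwise argument, that the infinite direct sum is also a product in $\mathcal{D}^+_c$. The composite ${}^{\mathfrak p}\tau^{\leq d}K\to K\xrightarrow{\tilde\partial}K[-2]\to({}^{\mathfrak p}\tau^{\geq d-1}K)[-2]$ vanishes because its target lies in ${}^{\mathfrak p}\mathcal{D}^{\geq d+1}$. Hence $\tilde\partial$ factors through $({}^{\mathfrak p}\tau^{\leq d-2}K)[-2]$. The same argument applied to the $y^n$-component $K\to K[-2n]$, using $K\in{}^{\mathfrak p}\mathcal{D}^{\geq 1}$, gives the bound on $\deg_y$ directly, without the exponential formula.
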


\medskip 

\noindent We note the following useful property, which will allow us to compute $\partial$ when $W=0$. 

\medskip

\begin{proposition} \label{prop: actionwhenWiszero}
For $W=0$, we have 
\begin{equation}
\label{eqn:explicit}
\mathrm{act}^{*}(f) = f(y+z_{i1},\cdots,y+z_{i n_i})
\end{equation}
for any color-symmetric polynomial $f(z_{i1},\cdots,z_{i n_i})_{i \in I} \in \mathcal{A}^{T}_{Q,\bn} \simeq \HHT[z_{i1},\dots,z_{in_i}]_{i \in I}^{\emph{sym}}$.
\end{proposition}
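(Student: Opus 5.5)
The plan is to compute the pullback $\mathrm{act}^*$ directly from its definition as pullback along $\act\colon \BCu \times \mathfrak{M}^T_{\bn}(Q) \to \mathfrak{M}^T_{\bn}(Q)$. When $W=0$, the vanishing cycle sheaf is the (shifted) constant sheaf. So $\mathrm{act}^*$ is just the ring homomorphism
\[
\HH(\mathfrak{M}^T_{\bn}(Q)) \to \HH(\BCu)\otimes \HH(\mathfrak{M}^T_{\bn}(Q)).
\]
The shift $[-(\bn,\bn)]$ plays no role because $\act$ is smooth of relative dimension $0$ up to the $\BCu$ factor, and the virtual shift is pulled back to the corresponding shift.

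Next we reduce to classifying spaces. Since $\mathrm{Rep}_{\bn}(Q)$ is a vector space, $\mathfrak{M}^T_{\bn}(Q)\to \mathrm{B}(\GL_{\bn}\times T)$ is a vector bundle stack, and pullback along it is an isomorphism on cohomology. The action morphism lifts compatibly: tensoring by $\mathcal{L}$ corresponds to the group homomorphism
\[
\BC^*\times \GL_{\bn}\times T \to \GL_{\bn}\times T, \qquad (\lambda,g,t)\mapsto (\lambda g,t).
\]
This map is compatible with the actions on $\mathrm{Rep}_{\bn}(Q)$, because scalars $\lambda\cdot\mathrm{id}$ act trivially by conjugation. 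Hence $\act$ covers the induced map $\BCu\times \mathrm{B}(\GL_{\bn}\times T)\to \mathrm{B}(\GL_{\bn}\times T)$, and it suffices to compute pullback on $\HH(\mathrm{B}\GL_{\bn})\otimes \HHT$.

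For the computation itself, we restrict to maximal tori via the injection $\HH(\mathrm{B}\GL_{\bn})\hookrightarrow \HH(\mathrm{B}(\BC^*)^{|\bn|})^{\sym}$. The map becomes
\[
(\lambda, t_{ia}) \mapsto (\lambda t_{ia}).
\]
On first Chern classes of the tautological characters it sends $z_{ia}\mapsto y+z_{ia}$, while fixing the $T$-parameters. Since pullback is a ring map, a color-symmetric polynomial $f$ goes to $f(y+z_{i1},\dots,y+z_{in_i})_{i\in I}$, which is \eqref{eqn:explicit}.

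The only step needing care is compatibility of this identification with the isomorphism $\mathcal{A}^T_Q\cong \CV$ of Example \ref{ex:any}. That isomorphism is exactly the one via Chern roots of the tautological bundles, so no sign or normalization issue arises.
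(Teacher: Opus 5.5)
Your proof is correct and takes essentially the paper's approach. The paper just observes that $\act^*\mathcal{V}_i \cong \mathcal{V}_i \boxtimes \mathcal{L}$ for the tautological bundles, so the Chern roots transform as $z_{ia} \mapsto z_{ia}+y$, and the formula follows because pullback is a ring map; your reduction to $\mathrm{B}(\GL_{\bn}\times T)$ via $(\lambda,g,t)\mapsto(\lambda g,t)$, followed by restriction to the maximal torus, is an explicit verification of that isomorphism and of the splitting-principle step. One phrase is slightly off: the shift is irrelevant simply because $\act^*$ of a shifted constant sheaf is the same shifted constant sheaf, not because of any relative-dimension count, and in fact $\act$ has relative dimension $-1$.
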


\begin{proof}
The variables $z_{ia}$ are the Chern roots of the tautological bundle $\mathcal{V}_{i}$ on the vertex $i$. We have $\mathrm{act}^{*}(\mathcal{V}_i) = \mathcal{V}_i \boxtimes \mathcal{L}$, where $\mathcal{L}$ is the tautological line bundle on $\BCu$. Thus it follows that $\mathrm{act}^{*}(z_{ia}) = z_{ia}+y$, which implies the statement. 
\end{proof}

\medskip

\subsection{The associated graded algebra}

Let \[ \Delta: \mathcal{A}^{T}_{Q,W} \rightarrow \mathcal{A}^{T}_{Q,W} \tilde{\otimes} \mathcal{A}^{T}_{Q,W} \] be the localized coproduct. We define a filtration on $\mathcal{A}^{T}_{Q,W} \tilde{\otimes} \mathcal{A}^{T}_{Q,W}$ as follows
\[P^{d}(\mathcal{A}^{T}_{Q,W,\bn^{1}} \tilde{\otimes} \mathcal{A}^{T}_{Q,W,\bn^{2}}) := \sum_{\ell \geq 0} (\mathrm{eu}^{T}_{\bn^{1},\bn^{2}}(\edge) \mathrm{eu}^{T}_{\bn^{1},\bn^{2}}(\Omega^{\mathrm{op}}))^{-\ell} P^{d+2\ell D}(\mathcal{A}^{T}_{Q,W,\bn^{1}} \otimes \mathcal{A}^{T}_{Q,W,\bn^{2}}) \]
for any $d$ and $\bn^1,\bn^2 \in \nn$, where $D = \mathrm{deg}(\mathrm{eu}^{T}_{\bn^{1},\bn^{2}}(\edge) )$. Above, we set \[ P^{d}(\mathcal{A}^{T}_{Q,W,\bn^{1}} \otimes \mathcal{A}^{T}_{Q,W,\bn^{2}})\sum_{d^1+d^2=d} P^{d^1}(\mathcal{A}^{T}_{Q,W,\bn^{1}}) \otimes P^{d^2}(\mathcal{A}^{T}_{Q,W,\bn^{2}}) \] for all $d$. 

\medskip 

\begin{proposition} \label{prop:coproductpreserve}
    The localized coproduct $\Delta$ preserves the perverse filtrations. 
\end{proposition}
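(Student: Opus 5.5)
We need to show $\Delta(P^d(\CA^T_{Q,W,\bn})) \subseteq \sum_{\bn^1+\bn^2=\bn} P^d(\CA^T_{Q,W,\bn^1}\tilde{\otimes}\CA^T_{Q,W,\bn^2})$. The plan is to use the PBW theorem (Proposition \ref{integrality theorem}) to reduce to products of BPS generators, and then use the bialgebra compatibility (Proposition \ref{prop: coha_loc_coprod}) to reduce to the coproduct of a single element $\beta u^k$ with $\beta \in \fg^T_{Q,W,\bn}$.

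First, the perverse filtration is multiplicative on $\CA^{T,\psi}_{Q,W}$. By Proposition \ref{integrality theorem}, $P^d(\CA^T_{Q,W,\bn})$ is spanned over $\HHT$ by products $\gamma_1 * \dots * \gamma_r$ with $\gamma_s = \beta_s u^{k_s}$, $\beta_s \in \fg^T_{Q,W,\bn_s}$, and $\sum_s (1+2k_s) \leq d$. This uses a lift of the associated graded basis: the product lies in $P^{\sum(1+2k_s)}$ because $u$ is the Euler class of a line bundle, so Proposition \ref{prop: eulerclassesonperversefiltration} applies with rank $1$. Using the commutative diagram of Proposition \ref{prop: coha_loc_coprod} (with the sign-twisted $\tilde{\mathrm{sw}}^\psi$), $\Delta$ of such a product equals $(m\otimes m)$ applied to a product of the $\Delta(\gamma_s)$, interspersed with swaps $\tilde{\mathrm{sw}}^\psi$.

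Three compatibility statements then finish the argument. (a) $m\otimes m$ respects the filtration on localized tensors. This follows from multiplicativity of $P$, together with the fact that the Euler-class denominators are products, so the localized pieces multiply compatibly: the $\ell$'s add. (b) $\tilde{\mathrm{sw}}^\psi$ preserves the filtration. It multiplies by $\mathrm{eu}(\edge)\,\mathrm{eu}(\edge^{\op})^{-1}$, which equals $\mathrm{eu}(\edge)^2\,(\mathrm{eu}(\edge)\mathrm{eu}(\edge^{\op}))^{-1}$. By Proposition \ref{prop: eulerclassesonperversefiltration}, $\mathrm{eu}(\edge)^2$ raises perverse degree by $4D$, while the factor $(\mathrm{eu}\,\mathrm{eu})^{-1}$ is accounted for by shifting $\ell$ by one, which lowers the degree by $2\cdot 2D$ in the definition. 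Since $Q$ is symmetric, $\deg \mathrm{eu}(\edge)=\deg\mathrm{eu}(\edge^{\op})=D$, and the net shift is zero. (c) $\Delta(\beta u^k) \in P^{1+2k}$.

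For (c), the $\bn$-component $\Delta_{\bn,0}$ and $\Delta_{0,\bn}$ are the identity (tensored with $1\in\CA_{\b0}$), so those parts are fine. For a nontrivial splitting $\bn^1+\bn^2=\bn$, the required bound is $P^{1+2k}$ on the localized tensor. I would use the automorphism action. By its geometric definition, $\Delta$ intertwines $\act^*$ with the diagonal action $\act^*\otimes\act^*$, because tensoring a short exact sequence by $\mathcal{L}$ acts on both ends. The Euler class ratio $\mathrm{eu}(I)/\mathrm{eu}(\edge)$ is invariant under simultaneous shift of all $z$'s. Hence $\Delta$ commutes with $\partial$ (acting as $\partial\otimes 1+1\otimes\partial$).

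Moreover, $u$ is multiplicative on coproducts: $\Delta(\gamma u)=\Delta(\gamma)(u\otimes1+1\otimes u)$, since $u$ is the first Chern class of the determinant of the total tautological bundle, and this determinant splits on the extension stack. So it suffices to treat $k=0$, i.e. to show $\Delta(\beta)\in P^1$. Here $\partial\beta=0$ by Proposition \ref{partialuder}. By PBW on each factor, the components of $\Delta(\beta)$ with their perverse degree can be detected by the action of $\partial\otimes1$ and $1\otimes\partial$, which lower the degree of components $\beta'u^{a}\otimes\beta''u^{b}$ via explicit formulas from Proposition \ref{partialuder}.

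The \textbf{main obstacle} is this last step. Annihilation by the diagonal $\partial$ only constrains the relative $u$-exponents, not the number of PBW factors or the $\ell$ in the localization. I would try instead a direct sheaf-theoretic argument: realize $\Delta_{\bn^1,\bn^2}$ over the coarse spaces via the direct sum map $\oplus:\mathcal{M}_{\bn^1}\times\mathcal{M}_{\bn^2}\to\mathcal{M}_{\bn}$, which is finite. This would show that $\Delta$, after multiplying by $(\mathrm{eu}\,\mathrm{eu})^{\ell}$ for large $\ell$ to clear denominators, comes from a morphism $(\JH)_*\varphi\BQ^{\vir}\to\oplus_*(\dots)[2\ell D\text{-shift}]$. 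Perverse degree control then follows from $t$-exactness of the finite pushforward $\oplus_*$, in the spirit of the proof in \cite{davison2020cohomological} that the product preserves $P$.
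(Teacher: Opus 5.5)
Your main line does not close, and the gap is where you put it: step (c) for $k=0$, i.e.\ showing $\Delta_{\bn^1,\bn^2}(\beta)\in P^1(\CA^T_{Q,W,\bn^1}\tilde{\otimes}\CA^T_{Q,W,\bn^2})$ for $\beta\in\fg^T_{Q,W,\bn}$ and $\bn^1,\bn^2\neq\b0$. This is not a simplified case; it carries the full content of the proposition, and nothing in the PBW/bialgebra scaffolding bears on it. Annihilation by the diagonal $\partial$ only constrains $u$-exponents. You also cannot import it from Proposition \ref{prop:primitivecalculation}, because that result is proved using $\mathrm{Gr}(\Delta)$ and the sheaf morphism \eqref{eqn:directsumpullbacksheaves}, i.e.\ using the present proposition. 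So the reduction gains nothing and can be dropped. Step (a) would in any case need more than ``the $\ell$'s add'': the individual mixed Euler classes are not symmetric in the combined variables, so you would have to invoke $\mathcal{S}$-linearity of $m$.

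Your fallback is the right idea, and it is in fact the paper's entire proof, applied directly to every $\gamma\in P^d$ rather than only to BPS elements. However, your sketch omits the step that makes it work. As written, it is unclear why $\Delta$ comes from a sheaf map over $\mathcal{M}_{\bn}$ at all, since $((q_1\times q_3)^*)^{-1}$ is merely the inverse of an isomorphism on cohomology. The paper's observation is that the trivial-extension map $s$ is a section of the affine fibration $q_1\times q_3$. Hence $s^*=((q_1\times q_3)^*)^{-1}$, and so $((q_1\times q_3)^*)^{-1}\circ q_2^*=\oplus^*$: before the Euler class prefactor, $\Delta_{\bn^1,\bn^2}$ is just pullback along the direct sum map. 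The commutative square $\JH_{\bn}\circ\oplus=\oplus_{\mathcal{M}}\circ(\JH_{\bn^1}\times\JH_{\bn^2})$, together with Thom--Sebastiani, then gives a morphism $(\JH_{\bn})_*\varphi_{\Tr(W)}\BQ^{\vir}\to(\oplus_{\mathcal{M}})_*\big((\JH_{\bn^1})_*\varphi\BQ^{\vir}\boxtimes(\JH_{\bn^2})_*\varphi\BQ^{\vir}\big)[-2(\bn^1,\bn^2)]$. Apply $\ptau^{\leq d}$, commute it past the finite pushforward $(\oplus_{\mathcal{M}})_*$, and take global sections. This yields $\oplus^*(P^d)\subseteq P^{d-2(\bn^1,\bn^2)}(\CA^T_{Q,W,\bn^1}\otimes\CA^T_{Q,W,\bn^2})$.

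There is no ``clearing of denominators for large $\ell$''. The only localization is in the prefactor $\mathrm{eu}(I)/\mathrm{eu}(\edge)=\mathrm{eu}(I)\,\mathrm{eu}(\edge^{\op})\cdot(\mathrm{eu}(\edge)\,\mathrm{eu}(\edge^{\op}))^{-1}$, which Proposition \ref{prop: eulerclassesonperversefiltration} handles directly. The shift $-2(\bn^1,\bn^2)$ is exactly compensated because, for symmetric $Q$, $(\bn^1,\bn^2)=\sum_i n^1_in^2_i-\sum_{\alpha\in\edge}n^1_{s(\alpha)}n^2_{t(\alpha)}$, the difference of the ranks of the bundles whose Euler classes are $\mathrm{eu}(I)$ and $\mathrm{eu}(\edge)$. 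Note also that the unlocalized bound on $\oplus^*$ is the form used later, in (B2) of Proposition \ref{prop:key2}, and your localized formulation would not directly provide it.
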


\medskip

\begin{proof} Let $s: \mathfrak{M}^T_{\bn^{1}}(Q) \times_{\BT} \mathfrak{M}_{\bn^{2}}(Q)  \rightarrow \mathfrak{M}^T_{\bn^{1},\bn^{2}}(Q)$ be the map given by taking a pair of representations to the trivial exact sequence. Notice that $(q_1 \times q_3) \circ s=\mathrm{Id}$, so $s^{*} \circ (q_1 \times q_3)^{*} = \mathrm{Id}$ and hence $s^{*} = ((q_1 \times q_3)^{*})^{-1}.$ Thus, we have 
$$
((q_1 \times q_3)^{*})^{-1} \circ q_2^{*} = \oplus^{*}
$$
where $\oplus$ is the direct sum morphism \[ \oplus \colon  \mathfrak{M}_{\bn^{1}}(Q) \times_{\BT} \mathfrak{M}_{\bn^{2}}(Q) \rightarrow \mathfrak{M}_{\bn^{1}+\bn^{2}}(Q).\] Note that the following diagram commutes 
\begin{equation}
\label{eqn:diagram 0}
\begin{tikzcd}
	{\mathfrak{M}_{\bn^{1}}(Q) \times_{\BT} \mathfrak{M}_{\bn^{2}}(Q)  } & {\mathfrak{M}_{\bn^1+\bn^2}(Q)} \\
	{\mathcal{M}_{\bn^{1}}(Q) \times_{\BT} \mathcal{M}_{\bn^{2}}(Q) } & {\mathcal{M}_{\bn^1+\bn^2}(Q)}
	\arrow["\oplus", from=1-1, to=1-2]
	\arrow["{\JH^T_{\bn^1} \times_{\BT} \JH^T_{\bn^2} }"', from=1-1, to=2-1]
	\arrow["\JH^T_{\bn^1+\bn^2}", from=1-2, to=2-2]
	\arrow["{\oplus_{\mathcal{M}} }"', from=2-1, to=2-2]
\end{tikzcd}
\end{equation}
Consider the map of constructible sheaves in $\mathcal{D}(\mathcal{M}_{\bn^1+\bn^2}(Q))$ given by taking pullback along $\oplus_{\mathcal{M}}$: 
\[ (\JH_{\bn^1+\bn^2})_{*} \varphi_{\Tr(W)}\mathbb{Q}^{\vir}_{\mathfrak{M}^T_{\bn^1+\bn^2}(Q)} \rightarrow (\oplus_{\mathcal{M}})_{*} (\oplus_{\mathcal{M}})^{*}(\JH_{\bn^1+\bn^2})_{*} \varphi_{\Tr(W)}\mathbb{Q}^{\vir}_{\mathfrak{M}^T_{\bn^1+\bn^2}(Q)}\]
Since the diagram \eqref{eqn:diagram 0} is Cartesian, applying Thom-Sebastiani isomorphism, induces a morphism of sheaves 
\begin{align*} (\JH_{\bn^1+\bn^2})_{*}& \varphi_{\Tr(W)}\mathbb{Q}^{\vir}_{\mathfrak{M}_{\bn^1+\bn^2}(Q)} \rightarrow  \\ &(\oplus_{\mathcal{M}})_{*}( (\JH_{\bn^1})_{*} \varphi_{\Tr(W)}\mathbb{Q}^{\vir}_{\mathfrak{M}_{\bn^{1}}(Q)} \boxtimes (\JH_{\bn^2})_{*} \varphi_{\Tr(W)}\mathbb{Q}^{\vir}_{\mathfrak{M}_{\bn^{2}}(Q)})[-2(\bn^{1},\bn^{2})]  \end{align*}
Taking the perverse truncation induces a morphism of sheaves 
\begin{align}{}^{\mathfrak{p}}&\!\tau^{\leq d}(\JH_{\bn^1+\bn^2})_{*} \varphi_{\Tr(W)}\mathbb{Q}^{\vir}_{\mathfrak{M}_{\bn^1+\bn^2}(Q)}  \rightarrow \label{eqn:directsumpullbacksheaves}\\ & {}^{\mathfrak{p}}\!\tau^{\leq d-2(\bn^{1},\bn^{2})}(\oplus_{\mathcal{M}})_{*}( (\JH_{\bn^1})_{*} \varphi_{\Tr(W)}\mathbb{Q}^{\vir}_{\mathfrak{M}_{\bn^{1}}(Q)} \boxtimes (\JH_{\bn^2})_{*} \varphi_{\Tr(W)}\mathbb{Q}^{\vir}_{\mathfrak{M}_{\bn^{2}}(Q)})  \nonumber \end{align}
Since the morphism $\oplus_{\mathcal{M}}$ is finite by \cite[Lemma 2.1]{svenrein}, we can commute past perverse truncation functor and $(\oplus_{\mathcal{M}})_{*}$. Thus, taking global sections implies that 
\begin{equation} \label{eqn:perversedirectsum}
\oplus^{*}: P^d(\mathcal{A}^{T}_{Q,W,\bn}) \rightarrow P^{d-2(\bn^{1},\bn^{2})} (\mathcal{A}^{T}_{Q,W,\bn^{1}} \otimes \mathcal{A}^{T}_{Q,W,\bn^{2}} ) 
\end{equation}  
By Proposition \ref{prop: eulerclassesonperversefiltration}, it follows that the cup product with $\frac {\mathrm{eu}^{T}_{\bn^{1},\bn^{2}}(I)}{\mathrm{eu}^{T}_{\bn^{1},\bn^{2}}(\edge)}$ induces \[ P^{d-2(\bn^{1},\bn^{2})}(\mathcal{A}^{T}_{Q,W,\bn^{1}} \otimes  \mathcal{A}^{T}_{Q,W,\bn^{2}}) \rightarrow P^{d}(\mathcal{A}^{T}_{Q,W,\bn^{1}} \tilde{\otimes} \mathcal{A}^{T}_{Q,W,\bn^{2}})\] 
Since $\Delta_{\bn^{1},\bn^{2}}= \frac {\mathrm{eu}^{T}_{\bn^{1},\bn^{2}}(I)}{\mathrm{eu}^{T}_{\bn^{1},\bn^{2}}(\edge)} \cup \oplus^{*}$, we are done. 

\end{proof}

\subsection{Primitive elements}

Proposition \ref{prop:coproductpreserve} shows that we have an induced morphism of associated graded algebras (with respect to the perverse filtration)
\[ \mathrm{Gr}(\Delta): \mathrm{Gr}(\mathcal{A}^{T}_{Q,W}) \rightarrow \mathrm{Gr}(\mathcal{A}^{T}_{Q,W} \tilde{\otimes} \mathcal{A}^{T}_{Q,W}) 
\]
Note that this morphism is non-zero; in fact, it was proved in \cite[Lemma 6.3]{davison2020cohomological} that the natural morphism \[\mathrm{Gr}(\mathcal{A}^{T}_{Q,W}) \otimes \mathrm{Gr}(\mathcal{A}^{T}_{Q,W})  \rightarrow \mathrm{Gr}(\mathcal{A}^{T}_{Q,W} \tilde{\otimes} \mathcal{A}^{T}_{Q,W}) \]  is injective. Let $\mathcal{P} \subset \mathrm{Gr}(\mathcal{A}^{T}_{Q,W})$ be the set of primitive elements, i.e $\mathrm{Gr}(\Delta)(x) = x \otimes 1 + 1 \otimes x$. The space $\mathcal{P}$ was calculated for trivial $T$ in \cite[Proposition 6.7]{davison2020cohomological}. The similar argument works for general $T$, and we have the following.

\medskip 

\begin{proposition} \label{prop:primitivecalculation}
Let $\fg^{T}_{Q,W}[u]  \subset \mathrm{Gr}(\mathcal{A}^{T}_{Q,W})$ be the subspace defined by the image of the action of $\mathbb{C}[u]$ defined in \eqref{eqn:bps into a} on $\fg^{T}_{Q,W} \subset \mathrm{Gr}(\mathcal{A}^{T}_{Q,W})$. The aforementioned subspace is precisely the set of primitive elements, i.e. 
$$
\mathcal{P} = \fg^{T}_{Q,W}[u]
$$
\end{proposition}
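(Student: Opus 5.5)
Following \cite[Proposition 6.7]{davison2020cohomological}, the plan is to prove the two inclusions separately and to use the PBW isomorphism of Proposition \ref{integrality theorem} to identify $\mathrm{Gr}(\mathcal{A}^T_{Q,W})$ with $\mathrm{Sym}_{\HHT}(\fg^T_{Q,W}[u])$. The aim is to show that, under this identification, $\mathrm{Gr}(\Delta)$ becomes the standard cocommutative coproduct on the symmetric algebra, for which $\fg^{T}_{Q,W}[u]$ is primitive. Once $\mathrm{Gr}(\Delta)$ is shown to be multiplicative with respect to the twisted symmetric product, the reverse inclusion will follow from the classical fact that, in characteristic zero, the primitives of a (super)symmetric algebra are exactly its degree-one part.

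\textbf{Step 1: $\fg^T_{Q,W}$ is primitive.} Let $\gamma\in\fg^T_{Q,W,\bn}=P^1$ and let $\bn=\bn^1+\bn^2$ with both summands nonzero. By \eqref{eqn:perversedirectsum}, $\oplus^*\gamma$ lies in $P^{1-2(\bn^1,\bn^2)}$ of the tensor product. After multiplying by $\mathrm{eu}(I)/\mathrm{eu}(\edge)$, this lands in perverse degree $1$ of the localized tensor product, as in the proof of Proposition \ref{prop:coproductpreserve}. However, $P^{d^1}\otimes P^{d^2}$ is nonzero only when $d^1,d^2\ge 1$, so nonzero elements of the (localized) tensor product have perverse degree at least $2$.

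To make this rigorous, I will use that the summand $\mathrm{Gr}$ in bidegree $(\bn^1,\bn^2)$ injects via \cite[Lemma 6.3]{davison2020cohomological}. The component of $\mathrm{Gr}(\Delta)(\gamma)$ in bidegree $(\bn^1,\bn^2)$ then lies in graded piece $1$ of a space concentrated in degrees $\ge 2$, hence vanishes. The components with $\bn^1=0$ or $\bn^2=0$ give $1\otimes\gamma+\gamma\otimes 1$.

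\textbf{Step 2: $u$ preserves primitives.} Multiplication by $u=\sum z_{ia}$ is cup product with $c_1$ of the determinant bundle. Under $\oplus^*$ this class restricts to $u\otimes1+1\otimes u$, and the Euler class factors commute with it. Hence $\Delta(u\gamma)=(u\otimes 1+1\otimes u)\Delta(\gamma)$. Since $u$ raises perverse degree by $2$ (Proposition \ref{prop: eulerclassesonperversefiltration}), this identity descends to $\mathrm{Gr}$, and $u^k\gamma$ is primitive. This proves $\fg^T_{Q,W}[u]\subseteq\mathcal{P}$.

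\textbf{Step 3: the reverse inclusion.} By Proposition \ref{prop: coha_loc_coprod}, $\Delta$ is a localized bialgebra map. On $\mathrm{Gr}$ the Euler-class twist in $\tilde{\mathrm{sw}}^\psi$ contributes only its leading term: the ratio $\mathrm{eu}(\edge)/\mathrm{eu}(\edge^{\op})$ is $\pm1$ plus terms of lower perverse degree, and for symmetric $Q$ this sign is $(-1)^{(\bn^1,\bn^2)}$-type, absorbed by $\psi$. Hence $\mathrm{Gr}(\Delta)$ is an algebra map into $\mathrm{Gr}\otimes\mathrm{Gr}$ with the super-twisted tensor product, again using the injectivity of \cite[Lemma 6.3]{davison2020cohomological}.

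Combining this with Steps 1--2 and PBW, $\mathrm{Gr}(\mathcal{A}^{T,\psi}_{Q,W})\cong\mathrm{Sym}(\fg^T[u])$ as bialgebras with primitive generators. Its primitives are then $\fg^T[u]$, by the standard argument over $\HHT$: take a homogeneous basis of the free module (Assumption \ref{assumption1}) and apply the symmetric-algebra coproduct $\sum$ over splittings of monomials.

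\textbf{Main obstacle.} The hardest step is controlling the leading-order behaviour of the localized Euler-class twist on $\mathrm{Gr}$. This requires showing that the denominators in the localized filtration do not create spurious terms of perverse degree $\le 1$, which is exactly where the definition of $P^d$ on $\tilde\otimes$ via powers of $\mathrm{eu}(\edge)\mathrm{eu}(\edge^{\op})$ must be used carefully.
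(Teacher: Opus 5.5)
Your Steps 2 and 3 essentially match the paper. The paper also notes that the twist $\mathrm{eu}^T(\edge^{\op})^{-1}\mathrm{eu}^T(\edge)=\prod(1+\frac{2u_\alpha}{\cdots})$ is unlocalized on $\mathrm{Gr}$, and then concludes with Milnor--Moore and the PBW isomorphism rather than computing primitives of $\Sym$ by hand. Step 1, however, has a genuine gap. That is where the real content lies, not in the Euler-class twist you flagged as the main obstacle.

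A count of perverse degrees cannot show that $\fg^T_{Q,W}$ is primitive. The filtration on $\tilde{\otimes}$ is built from negative powers of $\mathrm{eu}^T(\edge)\mathrm{eu}^T(\edge^{\op})$, so it is not concentrated in degrees $\geq 2$. Multiplying by $\mathrm{eu}^T(I)/\mathrm{eu}^T(\edge)$ shifts perverse degree by $2(\bn^1,\bn^2)$, which is very negative for quivers with many arrows. Also, \cite[Lemma 6.3]{davison2020cohomological} only embeds the unlocalized $\mathrm{Gr}\otimes\mathrm{Gr}$ into $\mathrm{Gr}(\tilde{\otimes})$; it does not place $\mathrm{Gr}(\Delta)(\gamma)$ in that image. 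Once the Euler factor is removed, what you actually need is $\oplus^*\gamma\in P^{-2(\bn^1,\bn^2)}$, one step lower than \eqref{eqn:perversedirectsum}. Since $(\bn^1,\bn^2)$ is the Euler form of $Q$, typically $1-2(\bn^1,\bn^2)\geq 2$, and the corresponding graded piece of $\mathrm{Gr}\otimes\mathrm{Gr}$ is in general nonzero. For example, take one vertex with $g\geq 2$ loops, $W=0$ and $\bn=3=1+2$. Then the bound is $4g-3$, the classes $1\otimes\Sym^2(\fg_{Q,1}[u])$ of that total degree are nonzero, and $\fg_{Q,3}\neq 0$. So the required vanishing is extra information about $P^1$ that no numerical estimate supplies.

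The missing ingredient is sheaf-theoretic: you must show that ${}^{\mathfrak{p}}\!\mathcal{H}^1$ applied to the morphism \eqref{eqn:directsumpullbacksheaves} is zero. For $W=0$, the source is $\mathcal{BPS}^T_{Q,\bn}=\mathcal{IC}(\mathcal{M}^T_{\bn}(Q))$ (or $0$ if there are no simple representations). This is a simple perverse sheaf with full support. The target is a semisimple perverse sheaf supported on the image of the finite map $\oplus_{\mathcal{M}}$, which is a proper closed subset when $\bn^1,\bn^2\neq\b0$. A morphism between semisimple perverse sheaves with no common simple constituent vanishes. For general $W$, apply $\varphi_{\Tr(W)}$, which is perverse t-exact and commutes with $\JH_*$. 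Taking global sections gives $\oplus^*(P^1)\subseteq P^{-2(\bn^1,\bn^2)}$, which is exactly the vanishing of the mixed components of $\mathrm{Gr}(\Delta)$ on $\fg^T_{Q,W}$. With that in place, your Steps 2 and 3 complete the proof.
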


\medskip

\begin{proof}We first show that the space $\fg^{T}_{Q,W}$ consists of primitive elements. We show that if $\bn^1,\bn^2 \neq \b0$, then $\oplus^{*}_{\bn^{1},\bn^{2}}(\fg^{T}_{Q,W,\bn})=0$ in the associated graded algebra, where $\bn=\bn^1+\bn^2$. We claim that the morphism \begin{align}{}^{\mathfrak{p}} \! \mathcal{H}^{1}&(\JH_{*} \varphi_{\Tr(W)}\mathbb{Q}^{\vir}_{\mathfrak{M}^T_{\bn}(Q)})  \rightarrow \label{eqn:degenratedirectsum} \\ & {}^{\mathfrak{p}} \! \mathcal{H}^{1-2(\bn^{1},\bn^{2})}\!(\oplus_{\mathcal{M}})_{*}( \JH_{*} \varphi_{\Tr(W)}\mathbb{Q}^{\vir}_{\mathfrak{M}^T_{\bn^{1}}(Q)} \boxtimes_{\oplus} \JH_{*} \varphi_{\Tr(W)}\mathbb{Q}^{\vir}_{\mathfrak{M}^T_{\bn^{2}}(Q)})\nonumber   \end{align} obtained by applying the perverse cohomology functor to the morphism of sheaves (\ref{eqn:directsumpullbacksheaves}) is zero. This will show the claim after taking the derived global section. 

\medskip 
    
To do this, we first show that the morphism \eqref{eqn:degenratedirectsum} is zero when $W=0$. If the morphism is zero when $W=0$ then applying the functor $\varphi_{\Tr(W)}$ proves the claim, since the functor $\varphi_{\mathrm{Tr}(W)}$ is perverse t-exact and it commutes with the pushforward $\JH_{*}$ by \cite[Lemma 4.1]{davison2020cohomological}. When $W=0$, \[ {}^{\mathfrak{p}} \! \mathcal{H}^{1}(\JH_{*}\mathbb{Q}^{\vir}_{\mathfrak{M}^T_{\bn}(Q)})= \mathcal{BPS}^T_{Q,\bn} = \mathcal{IC}_{\mathcal{M}^T_{\bn}(Q)}\] by \cite[Theorem A]{davison2020cohomological} and \cite[Theorem 4.6]{svenrein}, where $\mathcal{IC}_{\mathcal{M}^T_{\bn}(Q)} \in \Per(\mathcal{M}^T_{\bn}(Q))$ is a semisimple perverse sheaf, called the intersection cohomology sheaf. Thus in the equation (\ref{eqn:degenratedirectsum}) we have a morphism of semisimple perverse sheaves with distinct support when both $\bn^{1}$ and $\bn^{2}$ are non-zero. 

\medskip 

Since the coproduct is linear with respect to action of $u$, it follows that if $\alpha$ is primitive then $\Delta(u^n \cdot \alpha) = u^n \alpha \otimes 1 + 1 \otimes u^n \alpha$. Therefore, the subspace 
$$
\fg^{T}_{Q,W} \otimes \HH(\BCu,\mathbb{Q})
$$
is primitive, and so $\fg^{T}_{Q,W}[u] \subseteq \mathcal{P}$. We now show that in fact, these are all the primitive elements. We claim that the algebra generated by primitive elements $\mathcal{P}$ is an ordinary bialgebra with respect to $\Delta$, i.e. one doesn't need to localize the tensor product as in \eqref{eqn:localized tensor product}. This follows from the observation that the twisting morphism can be written as 
\[ \mathrm{sw} \circ (-1)^{\tau(\bn^{1}, \bn^{2})} \prod_{\alpha \in \edge} \prod_{\substack{1 \leq a \leq n^{1}_{s(\alpha)} \\ 1 \leq b \leq n^{2 }_{t(\alpha)}}}\left( 1+  \frac{2u_\alpha}{z^{\prime \prime}_{t(\alpha)b}-z^{\prime}_{s(\alpha)a}-u_\alpha} \right) \]
Since the $\HH^T$ action respects the preserve filtration, and dividing by Euler classes decreases the perverse degree by Propostion \ref{prop: eulerclassesonperversefiltration}, it follows that the twisting morphism is not localized in the associated graded algebra and thus the algebra generated by primitive elements is a connected bialgebra, which is commutative in $\mathrm{Gr}(\mathcal{A}^{T,\psi}_{Q,W})$. Thus by Milnor-Moore theorem, we have injection \[ \Sym(\fg^{T}_{Q,W}[u]) \subseteq \Sym(\mathcal{P}) \subseteq \mathcal{A}^{T}_{Q,W}.\] However by the cohomological integrality theorem (Proposition \ref{integrality theorem}), the inclusion $\Sym(\fg^T_{Q,W}[u]) \hookrightarrow \mathcal{A}^{T}_{Q,W}$ is an isomorphism and hence $\mathcal{P} = \fg^{T}_{Q,W}[u]$. 
    
\end{proof}

\noindent We have following analogue of Proposition \ref{prop:key} for the perverse filtration on $\mathcal{A}^{T}_{Q,W}$.

\medskip

\begin{proposition} \label{prop:key2}
For any $d \geq 1$ and $\bn \in \mathbb{N}^I$, we have 
\begin{equation} \label{eqn:propertyCoHA1}\gamma \in P^d(\mathcal{A}^{T}_{Q,W,\bn}) \end{equation}
if and only if 
\begin{equation} \tag{B1} \label{eqn:propertyCoHA2} 
\emph{deg}_y \left( \mathrm{act}^*(\gamma)  \right) \leq \frac {d-1}2 
\end{equation}  
and \begin{equation} \tag{B2}\label{eqn: propertyCoHA3}
\oplus^{*}_{\bm,\bn-\bm}(\gamma)\subseteq \sum_{d^{\prime}+d^{\prime \prime} \leq d - 2(\bm,\bn-\bm)} P^{d^{\prime}}(\mathcal{A}^{T}_{Q,W,\bm}) \otimes P^{d^{\prime \prime}}( \mathcal{A}^{T}_{Q,W,\bn-\bm})  \end{equation} for all $\b0 < \bm <\bn$. 
\end{proposition}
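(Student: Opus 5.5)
The forward implication should come directly from results already established. If $\gamma \in P^d(\mathcal{A}^T_{Q,W,\bn})$, then \eqref{eqn:propertyCoHA2} is exactly \eqref{eqn:equation action} of Proposition \ref{partialuder}. For \eqref{eqn: propertyCoHA3}, I would invoke \eqref{eqn:perversedirectsum} from the proof of Proposition \ref{prop:coproductpreserve}. This needs the observation that the perverse filtration on the tensor product $\mathcal{A}^T_{Q,W,\bm}\otimes_{\HHT}\mathcal{A}^T_{Q,W,\bn-\bm}$, computed by truncating the exterior product of the two pushforwards, agrees with $\sum_{d'+d''=e} P^{d'}\otimes P^{d''}$. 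That agreement follows from the splitting \eqref{splitting1} of each factor together with Künneth, which is where Assumption \ref{assumption1} enters.

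For the converse, suppose $\gamma$ satisfies (B1) and (B2), and let $e$ be minimal with $\gamma\in P^e$. We want $e\le d$, so assume for contradiction that $e>d$. By the PBW theorem (Proposition \ref{integrality theorem}), the image $\bar\gamma\in \mathrm{Gr}^e_P$ is a sum of symmetric products of elements of $\fg^T_{Q,W}[u]$. Split $\bar\gamma$ as $\bar\gamma_1+\bar\gamma_{\ge2}$, where $\bar\gamma_1$ is the part lying in $\fg^T_{Q,W,\bn}[u]$ and $\bar\gamma_{\geq 2}$ is the part with at least two tensor factors.

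First I would show $\bar\gamma_{\ge 2}=0$ using (B2). Suppose a component $\beta_1\cdots\beta_r$ with $r\ge2$ is present, where $\beta_s$ has dimension vector $\bm^s$. Choose $\bm=\bm^1$ (or a sub-sum of the $\bm^s$). Since primitive elements satisfy $\mathrm{Gr}(\Delta)(x)=x\otimes1+1\otimes x$ by Proposition \ref{prop:primitivecalculation}, the graded coproduct is multiplicative. Combined with the injectivity of $\mathrm{Gr}\otimes\mathrm{Gr}\to\mathrm{Gr}(\tilde\otimes)$ from \cite[Lemma 6.3]{davison2020cohomological}, this shows that the $(\bm,\bn-\bm)$ component of $\mathrm{Gr}(\Delta)(\bar\gamma)$ is nonzero in degree $e$. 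The map $\oplus^*$ differs from $\Delta$ only by the Euler class factor, which shifts perverse degree by exactly $2(\bm,\bn-\bm)$ (Proposition \ref{prop: eulerclassesonperversefiltration}). Hence $\oplus^*_{\bm,\bn-\bm}(\gamma)$ has nonzero image in filtration degree $e-2(\bm,\bn-\bm)$. Since $e-2(\bm,\bn-\bm)>d-2(\bm,\bn-\bm)$, this contradicts (B2).

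Once $\bar\gamma_{\ge2}=0$, we have $\bar\gamma=\sum_k \beta_k u^k$ with $\beta_k\in\fg^T_{Q,W,\bn}$ and $e=1+2\max k$. Proposition \ref{partialuder} gives $\partial^{k}(\beta_ku^k)=k!|\bn|^k\beta_k$. In particular, $\partial^{(e-1)/2}\gamma$ is nonzero modulo lower filtration, so $\deg_y \mathrm{act}^*(\gamma)\ge (e-1)/2>(d-1)/2$. This contradicts (B1). The parity case where $e$ is even but $\bar\gamma_1$ has only odd-degree components is handled the same way, since $\mathrm{Gr}^e$ is then spanned by products.

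I expect the main obstacle to be the first converse step: making the leading-term argument rigorous. This requires showing that the degree-$e$ symbol of $\oplus^*(\gamma)$ is genuinely nonzero, which rests on two points. The first is controlling the Euler class localization, so that the graded coproduct agrees with $\oplus^*$ up to an exact degree shift. The second is that the coproduct on $\mathrm{Sym}(\fg[u])$ separates products, which is the standard fact that in a cocommutative connected Hopf algebra an element with zero reduced coproduct is primitive. I would first attempt to reduce to this standard fact.
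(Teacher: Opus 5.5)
Your proposal is correct and follows essentially the same route as the paper. The forward direction comes from \eqref{eqn:equation action} and \eqref{eqn:perversedirectsum}. For the converse, you take $\gamma\in P^e$ with $e>d$, use (B2) together with the Euler class factor to see that $\gamma$ is primitive in $\mathrm{Gr}^e$, and then use (B1) with Proposition \ref{partialuder} to force $e\le d$. The only difference is your PBW splitting $\bar\gamma=\bar\gamma_1+\bar\gamma_{\ge 2}$: it re-derives ``primitive implies lying in $\fg^T_{Q,W}[u]$'', which the paper takes directly from Proposition \ref{prop:primitivecalculation}, so the obstacle you anticipate is already settled by citing that result.
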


\medskip

\begin{proof} If $\gamma \in P^d(\mathcal{A}^{T}_{Q,W,\bn})$, then \eqref{eqn:propertyCoHA2} is proved in \eqref{eqn:equation action}. Similarly, \eqref{eqn: propertyCoHA3} is the content of Proposition \ref{prop:coproductpreserve}. Let us now prove that \eqref{eqn:propertyCoHA2} and \eqref{eqn: propertyCoHA3} imply that $\gamma \in P^d(\mathcal{A}^{T}_{Q,W,\bn})$. Suppose that 
$$
\gamma \in P^{i}(\mathcal{A}^{T}_{Q,W,\bn}) 
$$
for some $i>d$. Then since  \[\oplus^{*}_{\bm,\bn-\bm}(\gamma)\subseteq \sum_{d^{\prime}+d^{\prime \prime} \leq d - 2(\bm,\bn-\bm)} P^{d^{\prime}}(\mathcal{A}^{T}_{Q,W,\bm}) \otimes P^{d^{\prime \prime}}( \mathcal{A}^{T}_{Q,W,\bn-\bm}) \]  for all $\b0 < \bm < \bn$, taking cup product with $\mathrm{eu}^{T}_{\bm,\bn-\bm}(I)/\mathrm{eu}^{T}_{\bm,\bn-\bm}(\edge)$ gives that \[ \Delta_{\bm,\bn-\bm} (\gamma) \in P^{d}(\mathcal{A}^{T}_{Q,W} \tilde{\otimes} \mathcal{A}^{T}_{Q,W}).\] Thus, $\gamma$ is a primitive element in the associated graded algebra. By Proposition \ref{prop:primitivecalculation}, this means that $\gamma = \beta u^k$ for some element $\beta \in P^1(\mathcal{A}^T_{Q,W})$. However, since \eqref{eqn:propertyCoHA2} holds, we have $k \leq \frac{d-1}{2}$ using Proposition \ref{partialuder}. Thus, we conclude \eqref{eqn:propertyCoHA1}.

\end{proof}

\medskip 

\subsection{Uniqueness}

\medskip 

\begin{proof} \emph{of Theorem \ref{thm:main}:}
When $W=0$, by Example \ref{ex:any}, $P^{\bullet}$ and $F^{\bullet}$ define two filtrations on $\mathcal{A}^{T}_Q$. Notice that for any vertex $i$, we have
$$
P^{d}(\mathcal{A}^{T}_{Q,\delta_i}) = \mathrm{span}(1,z_{i1},\cdots, z_{i1}^{\lfloor \frac{d-1}{2} \rfloor}) = F^d(\mathcal{A}^{T}_{Q,\delta_i}).
$$
Given a color symmetric function $E(z_{i1},\dots,z_{in_i})_{i \in I}$, we have by Proposition \ref{prop: actionwhenWiszero} \[ \mathrm{act}^{*}(E(z_{i1},\dots,z_{in_i})_{i \in I}) =  E(y+z_{i1},\dots,y+z_{in_i})_{i \in I}  \] and by definition \[ \oplus^{*}_{\bm,\bn-\bm}( E(z_{i1},\dots,z_{in_i})_{i \in I}) = E(z_{i1},\dots,z_{im_i}|z_{i,m_i+1},\dots,z_{in_i})_{i \in I} .\] Thus, recursively in $\bn \in \nn$, properties \eqref{eqn:propertyCoHA2} and \eqref{eqn: propertyCoHA3} are the same as \eqref{eqn:recursion 2} and \eqref{eqn:recursion 3}. Since these properties uniquely determine the filtration, it follows that $P^d(\mathcal{A}^{T}_{Q}) = F^{d}(\mathcal{A}^{T}_Q)$ and we are done. 

\end{proof}

\medskip

Let $F^d(\mathcal{A}^{T,\psi}_Q)$ denote the filtration on $\mathcal{A}^{T,\psi}_Q$ induced by isomorphism $\mathcal{V} \simeq \mathcal{A}^T_{Q}$. Note that Theorem \ref{thm:main} combined with the PBW Theorem \ref{integrality theorem} give another proof of Proposition \ref{prop:sub lie algebra}. We also note the following interesting consequence.

\medskip 

\begin{corollary} \label{BPS Lie algebra}
    There is an isomorphism of Lie algebras \[F^1(\mathcal{A}^{T,\psi}_Q) \simeq \mathfrak{g}^T_Q\]
\end{corollary}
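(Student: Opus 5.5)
The plan is to read this off Theorem \ref{thm:main} in the case $d=1$, together with the definition $\fg^T_Q = P^1(\CA^T_Q)$.

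\textbf{Step 1: equality as subspaces.} By Example \ref{ex:any} we identify $\CA^T_Q \cong \CV$ with the kernel $\zeta_{ij}(x) = \prod_{\alpha: i\to j}(-x+u_\alpha)/(-x)^{\delta_{ij}}$. Theorem \ref{thm:main} for $d=1$ then gives $P^1(\CA^T_{Q,\bn}) = F^1(\CA^T_{Q,\bn})$ for every $\bn$. Twisting the product by signs does not change the underlying graded vector space or the filtrations, which depend only on the sheaf $(\JH^T_\bn)_*\BQ^{\vir}$. Hence
\[
F^1(\CA^{T,\psi}_Q) = P^1(\CA^{T,\psi}_Q) = \fg^T_Q
\]
as subspaces of $\CA^{T,\psi}_Q$. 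For $\bn = \b0$, the $k=1$ bound with $d=1$ allows only constants. We exclude this degree on both sides, matching the convention that $\fg^T_Q$ is summed over $\bn \neq \b0$.

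\textbf{Step 2: matching the brackets.} On $\fg^T_Q$ the bracket is the supercommutator in $\CA^{T,\psi}_Q$. By the Example in Subsubsection \ref{subsub:sign twists}, for $E\in\CA^T_{Q,\bn}$ and $E'\in\CA^T_{Q,\bn'}$ this supercommutator equals
\[
[E,E'] = (-1)^{\psi(\bn,\bn')}\,[E,E']^{\chi},
\]
where $[\cdot,\cdot]^\chi$ is the bracket of Proposition \ref{prop:sub lie algebra}.

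\textbf{Step 3: closure and conclusion.} Proposition \ref{prop:sub lie algebra} with $d=d'=1$ shows that $F^1$ is closed under $[\cdot,\cdot]^\chi$. It is therefore closed under the sign-twisted bracket as well, since the two differ only by a sign depending on $(\bn,\bn')$. Independently, \cite[Corollary 6.11]{davison2020cohomological} shows that $P^1$ is closed. The identity map $F^1(\CA^{T,\psi}_Q)\to\fg^T_Q$ intertwines the brackets by construction, because both are restrictions of the same supercommutator on $\CA^{T,\psi}_Q$. It is thus the desired Lie algebra isomorphism.

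\textbf{Main obstacle.} The only real point of care is interpretive: one must read "$F^1(\CA^{T,\psi}_Q)$ as a Lie algebra" with the bracket inherited from the twisted product $*^\psi$, not from $[\cdot,\cdot]^\chi$. If the latter reading were intended, one would compose with the rescaling $E\mapsto \epsilon(\bn)E$ for a function $\epsilon$ satisfying $\epsilon(\bn)\epsilon(\bn')/\epsilon(\bn+\bn') = (-1)^{\psi(\bn,\bn')}$. Such an $\epsilon$ need not exist globally, since $\psi$ is not symmetric. I would therefore stay with the first reading, under which the statement is immediate.
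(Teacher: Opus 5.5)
Your proposal is correct and follows the paper's argument: the paper treats the corollary as immediate from Theorem \ref{thm:main} at $d=1$ together with the definition $\mathfrak{g}^T_Q = P^1(\mathcal{A}^T_Q)$, where the bracket on both sides is the supercommutator in $\mathcal{A}^{T,\psi}_Q$, and closure also follows from Proposition \ref{prop:sub lie algebra}, as you note. One minor point about $\bn=\b0$: there is no admissible partition into nonzero parts, so the defining condition is vacuous rather than forced by a $k=1$ bound, but this changes nothing since $\mathcal{V}_{\b0}=R$ consists of constants anyway.
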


\medskip 

\subsection{The morphism to the shuffle algebra} \label{morphismshuffle}

Let $Q$ be any symmetric quiver and $W$ be any $T$ invariant potential. Assume that there is another weighting $\textbf{w}: E \rightarrow \mathbb{Z}_{\geq 0}$ for which $W$ is homogenous and of strictly positive weight. Following \cite[Section 4.4]{botta2023okounkovs}, we construct a morphism of algebras 
\begin{equation}
\label{eqn:psi}
\Psi \colon \mathcal{A}^{T}_{Q,W} \rightarrow \mathcal{A}^{T}_{Q}.
\end{equation}
Let $Z^{T}_{\bn}(Q) := \Tr(W)_{\bn}^{-1}(0)$ and let $i_{\bn,T}: Z^T_{\bn}(Q) \rightarrow \mathfrak{M}^{T}_{\bn}(Q) $ be the inclusion of the zero set of the function 
$\Tr(W)_{\bn} \colon \mathfrak{M}^{T}_{\bn}(Q) \rightarrow \BC.$ From the definition of the vanishing cycle functor, we have a natural morphism of constructible complexes
\begin{equation} \label{eqn:vanishingmorphism} s \colon \varphi_{\Tr(W)} \BQ^{\vir}_{\mathfrak{M}^{T}_{\bn}(Q)} \rightarrow (i_{\bn,T})_{*}(i_{\bn,T})^{*} \BQ^{\vir}_{\mathfrak{M}^{T}_{\bn}(Q)}\end{equation} Taking global sections induces a morphism
\[ \HH(s): \mathcal{A}^{T}_{Q,W,\bn} \rightarrow \HH(Z^{T}_{\bn}(Q), i_{\bn,T}^{*}\mathbb{Q}^{\vir}_{\mathfrak{M}^{T}_{\bn}(Q)}). \]
On other hand, we have the restriction morphism of constructible complexes given by pullback along $i_{\bn,T}$:
\begin{equation} 
r \colon \label{eqn:pullbackalongclosed}\mathbb{Q}^{\vir}_{\mathfrak{M}^{T}_{\bn}(Q)} \rightarrow (i_{\bn,T})_{*}(i_{\bn,T})^{*} \mathbb{Q}^{\vir}_{\mathfrak{M}^{T}_{\bn}(Q)}  
\end{equation}
Taking global section induces s morphism \[ \HH(r): \HH(\mathfrak{M}^{T}_{\bn}(Q),\mathbb{Q}^{\vir}_{\mathfrak{M}^{T}_{\bn}(Q)})  \rightarrow  \HH(Z^{T}_{\bn}(Q),i_{\bn,T}^{*} \mathbb{Q}^{\vir}_{\mathfrak{M}^{T}_{\bn}(Q)})\] which we claim is an isomorphism. This is because if $Q^{\prime} \subset Q$ is the full subquiver consisting of exactly those arrows for which $\mathbf{w}(a)=0$ then by consider the action of torus induced by weighting $\mathbf{w}$, we can show that $Z^{T}_{\bn}(Q)$ is $T$ invariantly homotopic to $\mathfrak{M}^T_{\bn}(Q^{\prime})$, which in turn is homotopic to $\mathfrak{M}^{T}_{\bn}(Q)$. Thus, the restriction morphism $r$ is an isomorphism. We conclude that $r^{-1}s$ induces a morphism 
$$
\Psi_{\bn} \colon \mathcal{A}^{T}_{Q,W,\bn} \rightarrow \mathcal{A}^{T}_{Q,\bn}
$$ 
for all $\bn \in \mathbb{N}^{I}$, and taking the direct sum over $\bn$ induces the morphism $\Psi$ of \eqref{eqn:psi}. It was proved in \cite[Proposition 4.4]{botta2023okounkovs} that $\Psi$ is an algebra homomorphism. Let us assume that $Q,W,T$ satisfy Assumption \ref{assumption1}. We now show that $\Psi$ respects the perverse filtration. 

\medskip

\begin{proposition}
\label{prop:perverse}
The morphism $\mathcal{A}_{Q,W}^T\xrightarrow{\Psi} \mathcal{A}^T_Q$ respects the perverse filtrations:
\begin{equation}
\label{eqn:perverse preserve}
\Psi \left(P^d(\mathcal{A}_{Q,W}^T) \right) \subseteq P^d(\mathcal{A}_{Q}^T)
\end{equation}
for all $d \geq 1$.

\end{proposition}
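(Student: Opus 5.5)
The plan is to avoid working with the sheaf-level morphism $s$ and the perverse truncation functors directly. Instead I would use the recursive characterization of the perverse filtration in Proposition \ref{prop:key2}, applied on the target side $W=0$. By that proposition, $\Psi(\gamma) \in P^d(\mathcal{A}^T_{Q,\bn})$ will follow once $\Psi(\gamma)$ satisfies \eqref{eqn:propertyCoHA2} and \eqref{eqn: propertyCoHA3}. I would argue by induction on $\bn \in \nn$ with respect to the partial order. The base cases $\bn = \bs^i$ need only \eqref{eqn:propertyCoHA2}, because no $\b0<\bm<\bn$ exists.

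\textbf{Step 1: $\Psi$ intertwines the automorphism action.} I would show $\mathrm{act}^* \circ \Psi = (\mathrm{id}_{\BC[y]} \otimes \Psi) \circ \mathrm{act}^*$. The morphism $\mathrm{act}: \BCu \times \mathfrak{M}^T_{\bn}(Q) \to \mathfrak{M}^T_{\bn}(Q)$ pulls $\Tr_{\bn}(W)$ back to $0 \boxplus \Tr_{\bn}(W)$, since tensoring a representation with a line bundle does not change traces of cycles. Hence $\mathrm{act}$ preserves $Z^T_{\bn}(Q)$. Both the canonical map $s$ of \eqref{eqn:vanishingmorphism} and the restriction $r$ of \eqref{eqn:pullbackalongclosed} are natural with respect to such smooth pullbacks, so both commute with $\mathrm{act}^*$. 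Given $\gamma \in P^d(\mathcal{A}^T_{Q,W,\bn})$, Proposition \ref{partialuder} gives $\deg_y \mathrm{act}^*(\gamma) \le \frac{d-1}{2}$. The intertwining then shows $\deg_y \mathrm{act}^*(\Psi\gamma) \le \frac{d-1}{2}$, which is \eqref{eqn:propertyCoHA2} for $\Psi(\gamma)$.

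\textbf{Step 2: $\Psi$ intertwines $\oplus^*$.} I would show $\oplus^*_{\bm,\bn-\bm}\circ \Psi = (\Psi \otimes \Psi)\circ \oplus^*_{\bm,\bn-\bm}$. The left-hand $\oplus^*$ is the $W=0$ direct sum pullback, i.e. restriction of variables. The right-hand $\oplus^*$ is the composite of pullback, Thom--Sebastiani and Künneth used in Proposition \ref{prop:coproductpreserve}, and it is well defined by Assumption \ref{assumption1}. The pullback of $\Tr_{\bn}(W)$ along $\oplus$ is $\Tr_{\bm}(W)\boxplus \Tr_{\bn-\bm}(W)$, so $\oplus$ maps $Z_{\bm}\times_{\BT} Z_{\bn-\bm}$ into $Z_{\bn}$. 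The claim then reduces to checking that $s$ is compatible with the Thom--Sebastiani isomorphism, i.e. that $s_{f\boxplus g}$ corresponds under $\mathrm{TS}$ to $s_f \boxtimes s_g$ followed by restriction to $f^{-1}(0)\times g^{-1}(0)$. A similar argument (Thom--Sebastiani compatibility of the comparison maps) is already needed in \cite[Proposition 4.4]{botta2023okounkovs} to show $\Psi$ is an algebra homomorphism, so I would adapt that argument.

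\textbf{Step 3: the induction.} Take $\gamma \in P^d(\mathcal{A}^T_{Q,W,\bn})$. By \eqref{eqn:perversedirectsum},
$$
\oplus^*_{\bm,\bn-\bm}(\gamma) \in \sum_{d'+d''\le d-2(\bm,\bn-\bm)} P^{d'}(\mathcal{A}^T_{Q,W,\bm})\otimes P^{d''}(\mathcal{A}^T_{Q,W,\bn-\bm}).
$$
Since $\bm$ and $\bn-\bm$ are strictly smaller than $\bn$, the induction hypothesis gives $\Psi(P^{d'}) \subseteq P^{d'}$ in both factors. Step 2 then shows that $\oplus^*(\Psi\gamma)$ satisfies \eqref{eqn: propertyCoHA3}. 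Combined with Step 1, Proposition \ref{prop:key2} for $W=0$ (equivalently Proposition \ref{prop:key} via Theorem \ref{thm:main}) yields $\Psi(\gamma)\in P^d(\mathcal{A}^T_Q)$.

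I expect the main obstacle to be Step 2, namely the Thom--Sebastiani and Künneth compatibility of $s$ and of the homotopy inverse $r^{-1}$. The map $r^{-1}$ comes from a contraction along the weighting $\mathbf{w}$, and I would have to check that this contraction is compatible with direct sums; it should be, because the $\mathbf{w}$-torus action commutes with $\oplus$. Step 1 by contrast is formal.
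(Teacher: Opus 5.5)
Your proposal is correct and is essentially the paper's own proof: the paper establishes the same two intertwining properties, namely diagram (C1) for $\mathrm{act}^*$ (cited from \cite[Proposition 17.4]{J}) and diagram (C2) for $\oplus^*$ (via naturality of $s$ and $r$ together with Thom--Sebastiani), and then runs the same induction on $\bn$ through Proposition \ref{prop:key2}. The compatibility of $r^{-1}$ with direct sums, which you flag as the main obstacle, comes for free there: $r$ commutes with $\oplus^*$ because both are restriction maps, and $r$ is invertible on both sides, so no analysis of the $\mathbf{w}$-contraction is needed.
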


\medskip

\begin{proof}
   It is proved in \cite[Proposition 17.4]{J} that the following diagram commutes \[\begin{tikzcd} \label{diag1} \tag{C1}
	{\mathcal{A}^{T}_{Q,W,\bn}} & {\mathcal{A}^{T}_{Q,\bn}} \\
	{\mathrm{H}( \BCu) \otimes \mathcal{A}^T_{Q,W,\bn} } & {\mathrm{H}( \BCu) \otimes \mathcal{A}^{T}_{Q,\bn}}
	\arrow["\Psi", from=1-1, to=1-2]
	\arrow["{\mathrm{act}^{*}}"', from=1-1, to=2-1]
	\arrow["{\mathrm{act}^*}", from=1-2, to=2-2]
	\arrow["{\mathrm{id} \otimes \Psi_{\bn}}"', from=2-1, to=2-2]
\end{tikzcd}\]
We claim that the morphism $\Psi$ also respects the direct sum pullback, i.e for any $\b0 < \bm < \bn$ the following diagram commutes
\[\begin{tikzcd} \label{diag2} \tag{C2}
		{{\mathcal{A}^{T}_{Q,W,\bn}}} & {\mathcal{A}^{T}_{Q,\bn}} \\
	{\mathcal{A}^T_{Q,W,\bm}  \otimes \mathcal{A}^T_{Q,W,\bn-\bm} } & {\mathcal{A}^T_{Q,\bm}  \otimes \mathcal{A}^{T}_{Q,\bn-\bm}}
	\arrow["{\Psi_{\bn}}", from=1-1, to=1-2]
	\arrow["{{\oplus^{*}_{\bm,\bn-\bm}}}"', from=1-1, to=2-1]
	\arrow["{{\oplus^{*}_{\bm,\bn-\bm}}}", from=1-2, to=2-2]
	\arrow["{{\Psi_{\bm} \otimes \Psi_{\bn-\bm}}}", from=2-1, to=2-2]
\end{tikzcd}\]
Note that the following diagram commutes
\[\begin{tikzcd}
	{Z^{T}_{\bm}(Q) \times_{\BT}  Z^{T}_{\bn-\bm}(Q)} & {Z^T_{\bn}(Q)} \\
	{\mathfrak{M}^{T}_{\bm}(Q) \times_{\BT}  \mathfrak{M}^{T}_{\bn-\bm}(Q)} & {\mathfrak{M}^{T}_{\bn}(Q)}
	\arrow["{{\oplus }}", from=1-1, to=1-2]
	\arrow["{{i_{\bm,T} \times_{\BT} i_{\bn-\bm,T}}}"', from=1-1, to=2-1]
	\arrow["{i_{\bn,T}}", from=1-2, to=2-2]
	\arrow["{\oplus }", from=2-1, to=2-2]
\end{tikzcd}\]
Ignoring cohomological shifts, the naturality of morphism $s$ and Thom-Sebastiani implies that the the following diagram commutes
\[\begin{tikzcd}
	{\varphi_{\Tr(W)} \BQ^{\vir}_{\mathfrak{M}^{T}_{\bn}(Q)}} & {\varphi_{\Tr(W)} \BQ^{\vir}_{\mathfrak{M}^{T}_{\bm}(Q)}\boxtimes_{\oplus} \varphi_{\Tr(W)} \BQ^{\vir}_{\mathfrak{M}^{T}_{\bn-\bm}(Q)}} \\
	{(i_{\bn,T})_{*}(i_{\bn,T})^{*} \mathbb{Q}^{\vir}_{\mathfrak{M}^{T}_{\bn}(Q)}} & {(i_{\bn,T})_{*}(i_{\bn,T})^{*} \mathbb{Q}^{\vir}_{\mathfrak{M}^{T}_{\bm}(Q)} \boxtimes_{\oplus} (i_{\bn-\bm,T})_{*}(i_{\bn-\bm,T})^{*} \mathbb{Q}^{\vir}_{\mathfrak{M}^{T}_{\bn}(Q)}} \\
	{\mathbb{Q}^{\vir}_{\mathfrak{M}^{T}_{\bn}(Q)} } & {\mathbb{Q}^{\vir}_{\mathfrak{M}^{T}_{\bm}(Q)} \boxtimes \mathbb{Q}^{\vir}_{\mathfrak{M}^{T}_{\bm}(Q)} }
	\arrow["{\oplus^*}", from=1-1, to=1-2]
	\arrow["r", from=1-1, to=2-1]
	\arrow["{r \times r}", from=1-2, to=2-2]
	\arrow["{\oplus^{*}}", from=2-1, to=2-2]
	\arrow["s", from=3-1, to=2-1]
	\arrow["{\oplus^{*}}", from=3-1, to=3-2]
	\arrow["{s \times s}", from=3-2, to=2-2]
\end{tikzcd}\]
which by taking global section implies the commutativity of \eqref{diag2}.

\medskip 

We will now prove \eqref{eqn:perverse preserve} by induction on $\bn \in \nn$. If $\alpha \in P^d(\mathcal{A}^{T}_{Q,W,\delta_i})$, then Proposition \ref{prop:key2} and the commutativity of the diagram \ref{diag1} imply that $\Psi(\alpha) \in P^d(\mathcal{A}^T_{Q,\delta_i})$. Now consider any $\alpha \in P^{d}(\mathcal{A}^{T}_{Q,W,\bn})$ for $\bn$ larger than $\{\bs^i\}_{i \in I}$. The commutativity of diagram \eqref{diag1} implies that 
$$
\text{deg}_y \left( \mathrm{act}^*(\Psi(\alpha)) \right) \leq \frac {d-1}2
$$
which is precisely the statement that $\Psi(\alpha)$ satisfies property \eqref{eqn:propertyCoHA2}. Similarly, by the induction hypothesis on $\bn$, the commutativity of diagram \eqref{diag2} implies that $\Psi(\alpha)$ satisfies property \ref{eqn: propertyCoHA3}. Thus by Proposition \ref{prop:key2}, $\Psi(\alpha) \in P^d(\mathcal{A}_Q)$ and we are done. 

\end{proof}

Although the morphism $\Psi$ is generally well behaved, it fails to be injective when the monodromy on vanishing cycles cohomology is non-trivial. In those cases when it is injective, we now show that then in fact $\Psi$ is strict, in the sense of \eqref{eqn:strict}. This will allow us to compare $\fg^T_{Q,W}$ with $\fg^T_Q$. 

\medskip 

\begin{proposition}

If $\Psi$ is injective, then
    \begin{equation}
    \label{eqn:strict}
    \Psi(P^d(\mathcal{A}^T_{Q,W})) = \mathrm{Im}(\Psi) \cap P^d(\mathcal{A}^T_{Q}) 
    \end{equation}
    for all $d \geq 1$.
\end{proposition}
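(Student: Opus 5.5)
The inclusion $\subseteq$ is Proposition \ref{prop:perverse}, so only $\supseteq$ needs proof. Take $\gamma \in \mathcal{A}^T_{Q,W,\bn}$ with $\Psi(\gamma) \in P^d(\mathcal{A}^T_Q)$; we must show $\gamma \in P^d(\mathcal{A}^T_{Q,W})$. The plan is to verify the two criteria \eqref{eqn:propertyCoHA2} and \eqref{eqn: propertyCoHA3} of Proposition \ref{prop:key2} for $\gamma$, arguing by induction on $\bn \in \nn$. Injectivity of $\Psi$ will let us transport each criterion back from $\mathcal{A}^T_Q$.

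\textbf{Criterion \eqref{eqn:propertyCoHA2}.} Since $\Psi(\gamma) \in P^d(\mathcal{A}^T_Q)$, Proposition \ref{partialuder} gives $\deg_y \mathrm{act}^*(\Psi(\gamma)) \leq \frac{d-1}2$. By diagram \eqref{diag1},
\[
\mathrm{act}^*(\Psi(\gamma)) = (\mathrm{id}\otimes\Psi)(\mathrm{act}^*(\gamma)).
\]
Since $\mathrm{id}\otimes \Psi$ is injective on $\mathbb{C}[y]\otimes \mathcal{A}^T_{Q,W}$ and acts coefficientwise in $y$, each coefficient $\gamma_{(n)}$ with $n > \frac{d-1}2$ maps to zero, hence vanishes. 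This gives \eqref{eqn:propertyCoHA2}.

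\textbf{Base case.} For $\bn = \bs^i$ there is no $\b0<\bm<\bn$, so \eqref{eqn: propertyCoHA3} is vacuous and \eqref{eqn:propertyCoHA2} alone gives $\gamma \in P^d$.

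\textbf{Criterion \eqref{eqn: propertyCoHA3}.} This is the main obstacle. Diagram \eqref{diag2} gives $(\Psi_\bm\otimes\Psi_{\bn-\bm})(\oplus^*\gamma) = \oplus^*\Psi(\gamma)$, and the right-hand side lies in $\sum_{d'+d''\leq d-2(\bm,\bn-\bm)} P^{d'}\otimes P^{d''}$ of $\mathcal{A}^T_{Q,\bm}\otimes_{\HHT}\mathcal{A}^T_{Q,\bn-\bm}$ by Proposition \ref{prop:perverse} (or Theorem \ref{thm:main}). We need to pull this membership back along $\Psi_\bm\otimes\Psi_{\bn-\bm}$. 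Two points require care.

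First, injectivity of $\Psi_\bm\otimes_{\HHT}\Psi_{\bn-\bm}$. This should follow because all modules involved are free over $\HHT$ (Assumption \ref{assumption1}), at least after passing to $\mathrm{Frac}(\HHT)$, where injectivity of a tensor product of injections is automatic; the torsion-freeness of $\mathcal{A}^T_{Q,W}\otimes_{\HHT}\mathcal{A}^T_{Q,W}$ then lets us pass back.

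Second, strictness for tensor products. By the inductive hypothesis, $\Psi_\bm$ and $\Psi_{\bn-\bm}$ are strict, so $P^{d'}(\mathcal{A}^T_{Q,W,\bm}) = \Psi^{-1}P^{d'}(\mathcal{A}^T_{Q,\bm})$, and similarly for $\bn-\bm$. To see that $\Psi\otimes\Psi$ is strict for the tensor filtration, choose splittings of the perverse filtrations on both sides compatible with $\Psi$. Concretely, pick graded complements $C^{d'}$ of $P^{d'-1}$ inside $P^{d'}$ on the $W$ side. By strictness, $\Psi$ maps $\bigoplus_{d'} C^{d'}$ isomorphically onto its image, and $\Psi(C^{d'})$ meets $P^{d'-1}(\mathcal{A}^T_Q)$ trivially. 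These complements can then be extended to a splitting of $P^\bullet(\mathcal{A}^T_Q)$. Since these are graded modules over $\HHT$ with finite-dimensional graded pieces, or after working degreewise over $\mathbb{C}$ with $\otimes_{\HHT}$ replaced via the free bases, the tensor filtration becomes a direct sum filtration and strictness of $\Psi\otimes\Psi$ follows formally.

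I expect this linear-algebra step over $\HHT$ to be the delicate point. The safest route is likely to localize to $\mathrm{Frac}(\HHT)$, or to argue with graded $\mathbb{C}$-pieces using freeness.

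\textbf{Conclusion.} With both criteria verified, Proposition \ref{prop:key2} yields $\gamma\in P^d(\mathcal{A}^T_{Q,W,\bn})$, which closes the induction.
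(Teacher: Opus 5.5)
Your proof is correct, and it takes a different route from the paper's. The paper does not induct on $\bn$. It notes that $\Psi$ is injective, preserves $P^1$ and commutes with $u$, so it maps $\fg^T_{Q,W}[u]$ injectively into $\fg^T_Q[u]$. Since $\Gr(\Psi)$ is an algebra map, under the PBW isomorphisms of Proposition \ref{integrality theorem} it becomes $\Sym$ of this injection, hence is injective; injectivity of $\Gr(\Psi)$ is equivalent to $\Psi$ being strict and injective.

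You instead run the induction from the proof of Proposition \ref{prop:perverse} backwards, pulling (B1) and (B2) back along $\Psi$ via (C1), (C2) and Proposition \ref{prop:key2}. This avoids identifying $\Gr(\Psi)$ with a symmetric power of its restriction to $\fg^T_{Q,W}[u]$. The price is a strictness lemma for tensor products. PBW still enters your argument, through the proof of Proposition \ref{prop:key2}.

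Two corrections. First, the fact that $\oplus^*\Psi(\gamma)$ lies in $P^{d-2(\bm,\bn-\bm)}$ of the tensor product comes from \eqref{eqn:perversedirectsum} (Proposition \ref{prop:coproductpreserve}) with $W=0$, not from Proposition \ref{prop:perverse}.

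Second, your first suggestion for the tensor step does not work over $\HHT$. Extending $\Psi(C^{d'})$ to an $\HHT$-linear splitting of $P^\bullet(\CA^T_Q)$ can fail, because $\Psi(C^{d'})\oplus P^{d'-1}(\CA^T_Q)$ need not be a direct summand of $P^{d'}(\CA^T_Q)$. For example, $\Gr(\Psi)$ could act on a rank-one graded piece as multiplication by a nonconstant element of $\HHT$.

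The clean way to close this step is as follows:
\begin{itemize}
\item The perverse filtrations split $\HHT$-linearly by \eqref{splitting1}, with projective (hence flat) graded pieces, since these are direct summands of free modules.
\item For a filtered map with $P^0=0$, being strict and injective is equivalent to $\Gr$ being injective.
\item The associated graded of the tensor filtration is $\Gr\otimes_{\HHT}\Gr$, and $\Gr(\Psi_{\bm}\otimes\Psi_{\bn-\bm})=\Gr(\Psi_{\bm})\otimes\Gr(\Psi_{\bn-\bm})$.
\item This is injective, being a tensor product of injective maps between flat modules.
\item Hence $\Psi_{\bm}\otimes\Psi_{\bn-\bm}$ is strict and injective, given your inductive hypothesis.
\end{itemize}

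If you prefer to localize to $\mathrm{Frac}(\HHT)$, torsion-freeness of $\CA^T_{Q,W}\otimes_{\HHT}\CA^T_{Q,W}$ only gives injectivity. To pull filtration membership back, you also need the tensor filtration pieces on the $W$ side to be saturated. This again follows from the splitting.
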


\begin{proof}
Since the morphism $\Psi$ respects the perverse filtration, it induces a morphism of algebras $\mathrm{Gr}(\Psi) \colon \mathrm{Gr}(\mathcal{A}_{Q,W}^T) \rightarrow \mathrm{Gr}(\mathcal{A}_Q^T)$. The morphism $\Psi$ also respects the action of the $u$ operator by \cite[Proposition 14.1]{J}. Thus, $\Psi$ restricts to an injection of vector spaces $\Psi:\fg^T_{Q,W}[u] \rightarrow \fg^T_{Q}[u]$. This induces an injection of $\HHT$ modules $\Sym(\fg^T_{Q,W}[u]) \rightarrow \Sym(\fg^T_{Q}[u])$ and thus $\Gr(\Psi)$ is an injection by the cohomological integrality theorem (Proposition \ref{integrality theorem}). 

\end{proof}

\medskip

\section{Tripled quivers and the cubic potential}
\label{sec:triple}

We will now consider the particular case of a tripled quiver $\tQ$ with the canonical cubic potential, as in Examples \ref{ex:triple} and \ref{ex:cubic}, and derive some consequences of our results in this case. We set $R = \HHT$ endowed with particular elements $\{\hbar,u_{\alpha}\}_{\alpha \in \edge}$.

\medskip

\subsection{Wheel conditions}
\label{sub:wheel}

We henceforth only consider the shuffle algebra of Example \ref{ex:triple shuffle}, with $\CV$ referring exclusively to the shuffle algebra defined with respect to the zeta function \eqref{eqn:zeta triple}. We recall that the latter is given by
$$
\zeta_{ij}(x) = \left( \frac {x-\hbar}x \right)^{\delta_{ij}} \prod_{\alpha: \oij} (-x-u_\alpha) \prod_{\alpha:\oji}(-x-\hbar+u_\alpha)
$$
In the present paper, two assumptions will arise pertaining to the parameters $\{\hbar,u_{\alpha}\}_{\alpha \in \edge}$. The weak one will be in force throughout the present Section, and the strong one will be in force only when explicitly invoked.

\medskip 

\noindent \textbf{Weak:} There exists a character $\chi : M \rightarrow \BZ$ of the finitely generated abelian group $M = \text{span}_{\BZ}(\hbar,u_\alpha)_{\alpha \in \edge} \subset R$ such that
$$
\chi(\hbar) > \chi(u_\alpha) > 0
$$
for all $\alpha \in \edge$. Geometrically, this corresponds to the CoHA of Example \ref{ex:triple} for a torus $T$ that contains a one-parameter subgroup which acts on all arrows of the doubled quiver with strictly positive weight. 

\medskip 

\noindent \textbf{Strong:} There exist characters $\chi_1,\chi_2$ of the finitely generated abelian group $M = \text{span}_{\BZ}(\hbar,u_\alpha)_{\alpha \in \edge} \subset R$ such that
$$
\chi_1(u_\alpha) = 1 , \quad \chi_1(u_{\alpha^*}) = 0
$$
$$
\chi_2(u_\alpha) = 0 , \quad \chi_2(u_{\alpha^*}) = 1
$$
for all $\alpha \in \edge$. Geometrically, this corresponds to the CoHA of Example \ref{ex:triple} for a torus $T$ that contains a two-parameter subgroup which acts on all arrows of the original quiver with one weight, and on all the reverse arrows with another weight.

\medskip

\begin{definition}
\label{def:wheel}

As in \cite[Definition 5.2]{NWheel}, consider the subset
$$
\CS \subset \CV 
$$
consisting of polynomials $E(z_{ia})_{i \in I, a \geq 1}$ satisfying the so-called wheel conditions:
\begin{equation}
\label{eqn:wheel general}
\prod_{\alpha : \oij} (z_{jb} - z_{ic} - u_\alpha) \prod_{\alpha : \oji} (z_{jb} - z_{ic} - \hbar + u_\alpha)  \quad \text{divides} \quad E \Big|_{z_{ia} = z_{ic} + \hbar} 
\end{equation}
for all $i,j \in I$ and $a \neq c$ (and further $a \neq b \neq c$ if $i = j$).

\end{definition}

\medskip 

\begin{remark} 

If all the elements $\{u_\alpha, \hbar-u_{\alpha'}\}_{\alpha : i \rightarrow j, \alpha' : j \rightarrow i}$ are distinct, then conditions \eqref{eqn:wheel general} boil down to
\begin{equation}
\label{eqn:wheel}
E \Big|_{z_{ia} = z_{jb} + \hbar - u_\alpha = z_{ic} + \hbar} = E \Big|_{z_{ja} = z_{ib} + u_\alpha = z_{jc} + \hbar} = 0
\end{equation}
for any arrow $\alpha : \oij$ and all $a,b,c$ such that $a \neq c$ (and further $a \neq b \neq c$ if $i = j$). Relations \eqref{eqn:wheel} are the celebrated Feigin-Odesskii wheel conditions. 

\end{remark}

\medskip 

\noindent It is straightforward to see that $\CS$ is a subalgebra of $\CV$ with respect to the shuffle product, and that it contains the \emph{spherical} subalgebra
$$
\oCS \subseteq \CV
$$
defined as the $\BK$-span of
$$
\Big\{z_{i_11}^{d_1} * \dots * z_{i_n1}^{d_n} \Big\}_{i_1,\dots,i_n \in I, d_1,\dots,d_n \geq 0}
$$
We will write $\CV_{\text{loc}}, \CS_{\text{loc}}, \oCS_{\text{loc}}$ for $\CV,\CS,\oCS$ tensored with $\text{Frac}(R)$. The following was implicitly conjectured in \cite{NGen}, see \cite{NWheel} in the $K$-theoretic case.

\medskip 

\begin{conjecture} 
\label{conj:wheel}

We have $\CS_{\emph{loc}} = \oCS_{\emph{loc}}$. 

\end{conjecture} 

\medskip

\subsection{The CoHA of the tripled quiver} 

Consider the CoHA from Example \ref{ex:cubic} and the map to the shuffle algebra
\begin{equation}
\label{eqn:coha 1}
\Psi : \CA^T_{\tQ,\tW} \longrightarrow \CA^T_{\tQ} = \CV
\end{equation}
Under the strong assumption in Subsection \ref{sub:wheel}, the map $\Psi$ was shown to be injective in \cite[Theorem 10.2]{davison2022integrality} and \cite[Proposition 4.6]{schiffmanncohagenerators}. We will now show that the image of the map above lies in the subalgebra $\CS$ of Definition \ref{def:wheel}, by adapting to cohomology the arguments of \cite[Theorem 2.9, Corollary 2.10]{Zhao} and \cite[Proposition 2.11]{NWheel}.

\medskip 

\begin{proposition}
\label{prop:yu}

We have 
\begin{equation}
\label{eqn:coha 2}
\Psi\left(\CA^T_{\tQ,\tW}\right) \subseteq \CS
\end{equation}

\end{proposition}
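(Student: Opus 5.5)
The plan is to adapt the $K$-theoretic arguments of \cite[Theorem 2.9, Corollary 2.10]{Zhao} and \cite[Proposition 2.11]{NWheel} to cohomology. The key observation is that $\Psi$ factors through restriction to the zero locus of $\Tr(\tW)$, and that this zero locus is contained in an explicitly describable closed substack. By construction, $\Psi_{\bn} = \HH(r)^{-1}\circ \HH(s)$, and $\HH(s)$ lands in $\HH(Z^T_{\bn}(\tQ), i^*\BQ^{\vir})$. We will show, following Zhao, that the class $\HH(s)(\gamma)$ is supported on the critical locus $\mathrm{Crit}(\Tr(\tW)) \subset Z^T_{\bn}(\tQ)$. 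For the cubic potential, this critical locus is the stack of representations of the Jacobi algebra: pairs $(\rho,\omega)$ with $\rho$ a representation of $\Pi_Q$ and $\omega$ commuting with all arrows.

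Next we localize. Write $\CV \cong \HH^{\GL_{\bn}\times T}(\mathrm{Rep}_{\bn}(\tQ))$, so that an element of $\CV_{\bn}$ is a symmetric polynomial in the Chern roots $z_{ia}$. If $\Psi(\gamma)$ comes from a class supported on $\mathrm{Crit}$, then its restriction to $T$-fixed (or maximal-torus-fixed) points must vanish on the specializations where the fixed locus misses $\mathrm{Crit}$. Concretely, we reduce to the maximal torus of $\GL_{\bn}$. Specializing $z_{ia} = z_{ic}+\hbar$ means the loop $\omega_i$ has a weight-zero matrix entry $(\omega_i)_{ac}$. Similarly, $z_{jb}-z_{ic} = u_\alpha$ makes $\alpha$ (or $\alpha^*$) have a weight-zero entry.

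The divisibility \eqref{eqn:wheel general} will follow from a standard "wheel" argument. Consider the rank-one locus where only the entries $(\omega_i)_{ac}$, $\alpha_{bc}$ and $\alpha^*_{ab}$ (or the appropriate triangle of entries) are nonzero. The relation $\partial \tW/\partial \alpha^* = 0$, i.e. $\omega\alpha - \alpha\omega = 0$ on these entries, forces the product of two of the coordinates to vanish. Hence the restriction of any class supported on $\mathrm{Crit}$ to the linear subspace spanned by these entries is divisible by the Euler class of the normal directions. In practice we follow \cite[Proposition 2.11]{NWheel}: pull back along the inclusion of the closed substack where the three specified matrix entries are the only possibly nonzero ones, compare Euler classes, and read off that $E|_{z_{ia}=z_{ic}+\hbar}$ is divisible by the linear factor $(z_{jb}-z_{ic}-u_\alpha)$. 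We argue one arrow at a time, and the factors for distinct arrows are coprime in $R[z]$, so they assemble to divisibility by the full product. A convenient way to package this is via the shuffle product: as in \cite{Zhao}, it suffices to check the condition on generators after a dimension reduction, using that $\CS$ is a subalgebra. However, since $\CA^T_{\tQ,\tW}$ need not be generated in degree one, we prefer the direct support argument.

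The main obstacle is the first step: making rigorous, in the setting of vanishing cycle cohomology on stacks, that $\HH(s)(\gamma)$ lies in the image of cohomology with supports in $\mathrm{Crit}(\Tr\tW)$, and transporting this support condition through $\HH(r)^{-1}$ to an honest divisibility statement for polynomials. We would try first to use that $\varphi_f\BQ$ is supported on $\mathrm{Crit}(f)\cap f^{-1}(0)$, so that $s$ factors through $(i_{\mathrm{Crit}})_*(i_{\mathrm{Crit}})^!$ up to the natural maps. Alternatively, we could use dimensional reduction (\cite{Ren:2015zua,Yang_2019}) to identify $\CA^T_{\tQ,\tW}$ with Borel–Moore homology of the stack of $\Pi_Q$-representations pushed into $\mathfrak{M}_{\bn}(\dQ)\times\mathfrak{g}$. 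Then $\Psi$ becomes a pushforward along a closed embedding followed by Euler class multiplication, and the divisibility is checked by equivariant localization exactly as in the $K$-theoretic case.
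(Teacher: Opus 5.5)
\textbf{The gap is in your final assembly step.} You write: ``argue one arrow at a time; the factors for distinct arrows are coprime in $R[z]$, so they assemble to divisibility by the full product.'' The factors in \eqref{eqn:wheel general} are the linear forms $z_{jb}-z_{ic}-\gamma$, with $\gamma$ running over $\{u_\alpha\}_{\alpha:i\to j}\cup\{\hbar-u_{\alpha'}\}_{\alpha':j\to i}$. Neither the weak nor the strong assumption forces these values to be distinct. For example, a two-dimensional torus with $u_\alpha=t_1$ and $u_{\alpha^*}=t_2$ for every arrow satisfies the strong assumption. Then two parallel arrows $i\to j$ require $(z_{jb}-z_{ic}-t_1)^2$ to divide $E|_{z_{ia}=z_{ic}+\hbar}$.

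A test locus built from a single arrow, $\{x\neq 0\neq y\}$, can only ever give the first power. Its equivariant cohomology is the polynomial ring modulo the two weights of $x$ and $y$. If all the $\gamma$'s are distinct, as for the full torus of Example~\ref{ex:triple}, your argument would suffice; this is the situation of the Remark following Definition~\ref{def:wheel}. The paper's proof, however, is built to cover coincidences.

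The missing idea is to treat all $d+d'$ arrows with the same $\gamma$ simultaneously. The paper works on the locus $V_\gamma$ where
\begin{itemize}
\item $\phi_{\alpha_r}=x_rM_{bc}$ and $\phi_{\alpha_r^*}=y_rM_{ab}$,
\item $\phi_{\alpha'_{r'}}=y'_{r'}M_{ab}$ and $\phi_{\alpha'^*_{r'}}=x'_{r'}M_{bc}$,
\item all other arrows vanish.
\end{itemize}
It imposes only that the $(a,c)$ entry $\sum_r x_ry_r-\sum_{r'}x'_{r'}y'_{r'}$ of the moment map at $i$ be nonzero, not that the individual products be nonzero. This locus misses $\Pi_\bn$. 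Normalizing that entry to $1$ specializes $z_{ia}=z_{ic}+\hbar$. What remains is an affine bundle over $\BC^{d+d'}\setminus 0$ whose coordinates all have weight $z_{jb}-z_{ic}-\gamma$. Its equivariant cohomology is therefore the polynomial ring (specialized) modulo $(z_{jb}-z_{ic}-\gamma)^{d+d'}$. So vanishing of the restriction of $E$ gives exactly the required power.

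\textbf{Apart from this, your outline agrees with the paper, with two remarks.} First, the local computation should not be phrased as restriction to the closed linear span of a few matrix entries, or to the fixed locus of a subtorus. These contain $0\in\mathrm{Crit}$ and impose no constraint. The information comes only from an invariant locally closed subset disjoint from the support, whose equivariant cohomology is a proper quotient of the polynomial ring. Also, with your literal entries $(\omega_i)_{ac},\alpha_{bc},\alpha^*_{ab}$ and $a\neq c$, the relations $\omega\alpha-\alpha\omega$ hold identically. The relation that fails there is the moment-map relation $\partial\tW/\partial\omega_i$.

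Second, the step you call the main obstacle is harmless. Since $\varphi_{\Tr(\tW)}\mathbb{Q}$ is supported on $K=\mathrm{Crit}\cap Z$, the map $\HH(s)$ factors through cohomology of $Z$ with supports in $K$. Hence every element of $\mathrm{Im}\,\Psi$ restricts to zero on any torus-invariant locally closed subset of $Z\setminus K$. Setting $\omega=0$ places $V_\gamma$ inside $Z$, because $\tW$ is linear in the loops, and outside $\mathrm{Crit}$. So your support route recovers the needed input. The paper instead takes your dimensional-reduction alternative: it cites \cite[Proposition 4.5]{botta2023okounkovs} for $\mathrm{Im}\,\Psi\subseteq\mathrm{Im}\,\iota_*$ with $\iota:\Pi_\bn\hookrightarrow X_\bn$.
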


\medskip 

\begin{proof} Fix $\bn \in \nn$ and consider the affine space
\begin{equation}
\label{eqn:x}
X_{\bn} = \text{Rep}_{\bn}(\bar{Q}) = T^*\text{Rep}_{\bn}(Q) 
\end{equation}
whose points parameterize collections of linear maps
\begin{equation}
\label{eqn:point}
\left( \BC^{n_i} \xleftrightharpoons[\phi_\alpha]{\phi_\alpha^*} \BC^{n_j} \right)_{\alpha : \oij} 
\end{equation}
The algebraic group $\GL_{\bn} = \prod_{i \in I} \GL_{n_i}$ acts on $X_{\bn}$ via conjugation, and preserves the closed substack  
\begin{equation}
\label{eqn:z}
\Pi_{\bn} = \left\{ \sum_{\alpha \in \edge} \phi_\alpha \phi_\alpha^* - \phi_\alpha^*\phi_\alpha = 0 \right\} \stackrel{\iota}\hookrightarrow X_{\bn}
\end{equation}
We will consider a maximal torus $A \subset \GL_{\bn}$, which determines a usual basis of the vector spaces $\BC^{n_i}$, indexed by $a \in \{1,\dots,n_i\}$. We will write $M_{ba}$ for the linear map $\BC^{n_i}\rightarrow \BC^{n_j}$ which sends the $a$-th basis vector in the domain to the $b$-th basis vector in the codomain. Moreover, we will write
$$
\{z_{i1},\dots,z_{in_i}\}_{i \in I}
$$
for the elementary characters of the maximal torus $A$, so that $\HH^{\GL_{\bn} \times A}$ consists of color-symmetric polynomials in the $z_{ia}$'s. It is well known (see for instance \cite[Proposition 4.5]{botta2023okounkovs}) that $\text{Im }\Psi$ lies inside the image of
$$
\iota_* : \HH^{T \times \GL_{\bn}}_{\text{BM}}(\Pi_{\bn}) \rightarrow \HH^{T \times \GL_{\bn}}(X_{\bn}) = \CV_{\bn}
$$
Therefore, it suffices to show that for any $\beta \in \HH^{T \times \GL_{\bn}}_{\text{BM}}(\Pi_{\bn})$, the polynomial $\iota_*(\beta)$ satisfies the wheel conditions \eqref{eqn:wheel general}. To this end, for any $i,j \in I$ and any $\gamma \in \text{span}_{\BZ}(\hbar, u_\alpha)_{\alpha \in \edge}$, let us consider all arrows: 
$$
\alpha_1,\dots,\alpha_d : \oij \quad \text{and} \quad \alpha_1',\dots,\alpha'_{d'} : \oji
$$
such that:
$$
u_{\alpha_1} = \dots = u_{\alpha_d} = \hbar - u_{\alpha'_1} = \dots = \hbar - u_{\alpha'_{d'}} = \gamma
$$
We need to show that for any $\beta \in \HH^{T \times \GL_{\bn}}_{\text{BM}}(\Pi_{\bn})$, the shuffle element $E = \iota_*(\beta)$ has the property that
\begin{equation}
\label{eqn:division property}
(z_{jb} - z_{ic} - \gamma)^{d+d'} \quad \text{divides} \quad E \Big|_{z_{ia} = z_{ic} + \hbar}
\end{equation}
(for any indices $a\neq c$, such that moreover $a\neq b \neq c$ if $i=j$). Consider the locally closed subset $\rho : V_\gamma \hookrightarrow X_{\bn}$ of points \eqref{eqn:point} which take the following form:
$$
(\phi_\alpha,\phi_\alpha^*) = \begin{cases} (x_r M_{bc}, y_r M_{ab}) &\text{if }\alpha = \alpha_r \text{ for some }r \in \{1,\dots,d\} \\ (y_{r'}' M_{ab}, x'_{r'} M_{bc}) &\text{if }\alpha = \alpha'_{r'} \text{ for some }r' \in \{1,\dots,d'\} \\ (0,0) &\text{otherwise} \end{cases}
$$
where in the formula above the $x$'s and $y$'s are complex numbers which satisfy
$$
x_1y_1+\dots+x_dy_d - x_1'y_1' - \dots - x_{d'}'y_{d'}' \neq 0
$$
Thus, the set $V_\gamma$ is the complement of a hypersurface in the affine space $X_{\bn}$ with coordinates $x_r,y_r, x'_{r'}, y_{r'}'$, and the action $T \times A \curvearrowright V_\gamma$ is such that the $x$'s and $y$'s are rescaled by the characters:
$$
z_{jb}- z_{ic}- \gamma \qquad \text{and} \qquad z_{ia}-z_{jb}+\gamma- \hbar
$$
respectively. Note that $V_{\gamma} \cap \text{Im }\iota_* = \varnothing$, so $E = \iota_*(\beta)$ implies $\rho^*(E) = 0$. We will use the latter equation to obtain \eqref{eqn:division property}. Replacing $V_{\gamma}$ by the closed subset
$$
\bar{V}_\gamma = \Big\{x_1y_1+\dots+x_dy_d - x_1'y_1' - \dots - x_{d'}'y_{d'}' = 1 \Big\}
$$
has the effect of replacing $E$ by $\bar{E} = E|_{z_{ia} = z_{ic} + 
\hbar}$. However, $\bar{V}_\gamma$ is an affine bundle over projective space with coordinates $x_1,\dots,x_d,x_1',\dots,x'_{d'}$, so its equivariant cohomology is isomorphic to that of projective space, namely:
$$
\HH^T[\dots,z_{ia},\dots,z_{jb},\dots,z_{ic}, \dots]\Big|_{z_{ia} = z_{ic} + \hbar} \Big/ (z_{jb} - z_{ic} - \gamma)^{d+d'}
$$
(the particular quotient is due to the fact that the coordinates $x_1,\dots,x_d,x_1',\dots,x'_{d'}$ are all rescaled by the equivariant parameter $z_{jb} - z_{ic} - \gamma$). Then the fact $\rho^*(E) = 0$ precisely implies \eqref{eqn:division property}.

\end{proof}

\medskip 

\subsection{Spherical generation}

We will now consider the localized CoHA, i.e.
$$
\CA^T_{\tQ,\tW,\mathrm{loc}} = \CA^T_{\tQ,\tW} \bigotimes_{R} \text{Frac}(R)
$$
It was shown in \cite{NGen} that the localized CoHA is spherically generated, hence
\begin{equation}
\label{eqn:coha 3}
\Psi : \CA^T_{\tQ,\tW,\mathrm{loc}} \xrightarrow{\sim} \oCS_{\text{loc}}
\end{equation}
under the strong assumption of Subsection \ref{sub:wheel}. Therefore, Conjecture \ref{conj:wheel} is equivalent to the following opposite inclusion to \eqref{eqn:coha 2}.

\medskip 

\begin{conjecture} 
\label{conj:coha}

The localization of the map \eqref{eqn:coha 2} yields an isomorphism
$$
\Psi : \CA^T_{\tQ,\tW,\emph{loc}} \xrightarrow{\sim} \CS_{\emph{loc}}
$$
under the strong assumption of Subsection \ref{sub:wheel}. 

\end{conjecture} 

\medskip 

Theorem \ref{thm:main} and \ref{hommorphismcompatibility} have the following consequence.

\medskip 

\begin{corollary} 
\label{cor:tripled}

Assuming Conjecture \ref{conj:coha}, the perverse filtration on the localized CoHA of the tripled quiver with canonical cubic potential is explicitly described as
$$
P^d  \CA^T_{\tQ,\tW,\emph{loc}} \cong \Big\{\text{polynomials satisfying \eqref{eqn:degree bound} and \eqref{eqn:wheel general}}\Big\}
$$
under the strong assumption of Subsection \ref{sub:wheel}. 

\end{corollary}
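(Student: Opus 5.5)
The plan is to combine four earlier results: the injectivity of the localized map $\Psi$ from \eqref{eqn:coha 3}, Conjecture \ref{conj:coha} (which identifies its image with $\CS_{\mathrm{loc}}$), the strictness statement \eqref{eqn:strict}, and Theorem \ref{thm:main}. Under the strong assumption, $\Psi : \CA^T_{\tQ,\tW} \to \CA^T_{\tQ} = \CV$ is injective by \cite[Theorem 10.2]{davison2022integrality} and \cite[Proposition 4.6]{schiffmanncohagenerators}. Assumption \ref{assumption1} holds here because of the purity results of \cite{davison2022integrality}. So Proposition \ref{prop:perverse} and the strictness proposition both apply, and strictness gives
$$
\Psi\left(P^d(\CA^T_{\tQ,\tW})\right) = \mathrm{Im}(\Psi) \cap P^d(\CA^T_{\tQ}).
$$

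The next step is to pass to localization by tensoring over $R = \HHT$ with $\mathrm{Frac}(R)$. The perverse filtration consists of $R$-submodules, since the $\HHT$-action respects it. Localization is exact, so it commutes with intersections of submodules and preserves injectivity. Hence the identity above persists after localizing, with $P^d(\CA^T_{\tQ,\tW,\mathrm{loc}})$ defined as the localization of $P^d$. Conjecture \ref{conj:coha} then identifies $\mathrm{Im}(\Psi)_{\mathrm{loc}}$ with $\CS_{\mathrm{loc}}$. Theorem \ref{thm:main}, applied to the tripled quiver with zero potential, identifies $P^d(\CA^T_{\tQ})$ with $F^d(\CV)$, i.e.\ with the polynomials satisfying \eqref{eqn:degree bound}. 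Putting these together,
$$
\Psi : P^d(\CA^T_{\tQ,\tW,\mathrm{loc}}) \xrightarrow{\sim} \CS_{\mathrm{loc}} \cap F^d(\CV)_{\mathrm{loc}},
$$
which is exactly the set of polynomials satisfying both \eqref{eqn:degree bound} and \eqref{eqn:wheel general}.

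The main obstacle I expect is making sure the localized objects agree with the plain-language description. Concretely, one must check that $F^d(\CV)_{\mathrm{loc}}$ equals the set of elements of $\CV_{\mathrm{loc}}$ satisfying the degree bounds \eqref{eqn:degree bound}, with coefficients now in $\mathrm{Frac}(R)$. This holds because the bounds concern degrees in the auxiliary variables $y_a$, and clearing a common denominator in $R$ does not change those degrees. The analogous check is needed for the wheel conditions, where divisibility by polynomials in the $z$'s with $R$-coefficients is likewise unaffected by scalars in $\mathrm{Frac}(R)$. I would also state explicitly that the notion of polynomial in the corollary is taken over $\mathrm{Frac}(R)$.

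A secondary point is that the strictness proposition is stated for the unlocalized map. I would apply it before localizing, rather than re-proving it for the localized map, so that the argument above only ever uses exactness of localization.
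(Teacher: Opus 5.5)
Your proposal is correct and follows the paper's argument. Like the paper, you combine four ingredients: the injectivity of $\Psi$ under the strong assumption, the compatibility of Proposition \ref{prop:perverse} upgraded to the strictness statement \eqref{eqn:strict}, Theorem \ref{thm:main} for $\tQ$ with zero potential, and Conjecture \ref{conj:coha}. Your treatment of localization fills in details the paper leaves implicit: exactness commuting with images and intersections, and clearing denominators in $R$ for both the degree bounds and the monic wheel divisibility conditions. You also rightly point out that strictness, and not Proposition \ref{prop:perverse} alone, is what gives equality rather than mere inclusion.
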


\medskip 

\noindent In particular, setting $d = 1$ would imply that $\mathfrak{g}^{T}_{\tilde{Q},\tilde{W},\mathrm{loc}}$ can be explicitly realized as the (sub super Lie algebra) of $\CV$ consisting of $E(z_{i1},\dots,z_{in_i})_{i \in I}$ for which

\medskip 

\begin{itemize}

\item the wheel conditions \eqref{eqn:wheel general} hold

\medskip

\item for any partition of the degree vector $\bn = (n_i)_{i \in I}$ 
$$
\bn = \bn^1 + \dots + \bn^k
$$
(for arbitrary $k \geq 1$ and $\bn^1,\dots,\bn^k \in \nn \backslash \b0$), the polynomial
$$
E(y_1+z_{i,1},\dots,y_1+z_{i,n_i^1}, \dots,y_k+z_{i,n_i-n_i^k+1}, \dots, y_k+z_{i,n_i})_{i \in I}
$$
has total degree in $y_1,\dots,y_k$ at most $\frac {1-k}2 - \sum_{1\leq a < b \leq k} (\bn^a,\bn^b)$.

\end{itemize}

\medskip 

\subsection{The preprojective CoHA}

The preprojective CoHA refers to
$$
\CA_{\Pi_Q} = \bigoplus_{\bn \in \nn} \HH^T(\Pi_{\bn})
$$
where $\Pi_{\bn} \subset X_{\bn}$ are defined in \eqref{eqn:x} and \eqref{eqn:z}, endowed with the Schiffmann-Vasserot cohomological Hall algebra structure (see \cite{Yang_2018} for details). It was shown in  \cite[Appendix]{Ren:2015zua} and \cite{Yang_2019} that we have isomorphisms of algebras 
$$
\CA^T_{\tQ,\tW} \cong \CA_{\Pi_Q} 
$$ 
and in \cite[Theorem 4.1]{davison2022bps} that there is isomorphism of Lie algebras 
$$
\mathfrak{g}^{T}_{\tilde{Q},\tilde{W}} \simeq \mathfrak{g}^{T}_{\Pi_Q}
$$ 
where $\CA_{\Pi_Q}$ denotes the cohomological Hall algebra of Schiffmann-Vasserot and Yang-Zhao, and $\mathfrak{g}^{T}_{\Pi_Q}$ denotes its BPS Lie algebra (defined as the cohomology of the BPS sheaf $\mathcal{BPS}_{\Pi_Q}$ for the preprojective algebra, which is in turn defined by considering the support of BPS sheaf $\mathcal{BPS}_{\tilde{Q},\tilde{W}}$). Then Corollary \ref{cor:tripled} (which is contingent on Conjecture \ref{conj:coha}) would also provide an explicit description of localized preprojective BPS Lie algebra $\mathfrak{g}^T_{\Pi_Q,\mathrm{loc}}$. Recall that the following formula for the graded dimension of the localized preprojective BPS Lie algebra was proved in \cite{davisonkac} and \cite{schiffmanncohagenerators}
$$
\text{grdim}_{\BK} \mathfrak{g}^{T}_{\Pi_Q,\mathrm{loc}} = \sum_{\bn \in \nn} q^{\bn} A_{Q,\bn}(1)
$$
where $A_{Q,\bn}$ denotes the Kac polynomial of the quiver $Q$ in dimension $\bn$. Therefore, Corollary \ref{cor:tripled} would imply that the vector space of polynomials satisfying the conditions in the two bullets in the previous Subsection has dimension equal to $A_{Q,\bn}(1)$. This statement would be the cohomological version of \cite[Conjecture 4.5]{NR}, since the algebra $\CB_0$ of \loccit is expected to $q$-deform $\mathfrak{g}^{T}_{\Pi_Q,\mathrm{loc}}$.

\medskip 

\printbibliography

\end{document}